\newcommand{\N}{\mathbb{N}}
\newcommand{\Z}{\mathbb{Z}}
\newcommand{\E}{\mathbb{E}}
\newcommand{\R}{\mathbb{R}}
\newcommand{\p}{\mathbb{P}}
\newcommand{\Partial}[2]{\frac{\partial #1}{\partial #2}}
\newcommand{\scndmxPartial}[3]{\frac{\partial^2 #1}{\partial #2 \partial #3}}
\newtheorem{proposition}{Proposition}[section]
\newtheorem{lemma}[proposition]{Lemma}
\newtheorem{theorem}[proposition]{Theorem}
\newtheorem{corollary}[proposition]{Corollary}
\renewcommand{\epsilon}{\varepsilon}
\numberwithin{equation}{section}
\title{At the Edge of a Cloud of Brownian Particles}
\author{Dom Brockington\footnote{Mathematics Institute, University of Warwick, Coventry CV4 7AL, UK. Email Address: Dominic.Brockington@warwick.ac.uk.} and Jon Warren\footnote{Department of Statistics, University of Warwick, Coventry CV4 7AL, UK. Email Address: J.Warren@warwick.ac.uk}}
\begin{document}
\maketitle

\begin{abstract}
    We study a simple model for the trajectory of a particle in a turbulent fluid, where a Brownian motion travels through a random Gaussian velocity field. We study the quenched law of the process and prove that in a weak environment setting, the fluctuations at the transition from weak to strong disorder are described by the KPZ equation. We conjecture the same is true without the assumption of a weak environment.
\end{abstract}

\section{Introduction}
In this paper we consider a model of turbulent advection describing particle trajectories in one dimensional turbulent fluids. The particles each have their own, independent molecular diffusive motion, and are  also transported through a Gaussian random drift field which represents the effect of the fluid. The field  is Brownian in time and smooth, but rapidly decorrelating, in space. Associated to this model is a stochastic flow of kernels, \cite{lejan2004}, which can be thought of as the density of a cloud of particles in the fluid, or as the random transition density of a single particle running through a realisation of the drift field. Using this interpretation, it is possible to show the flow of kernels solve a stochastic partial differential equation (SPDE) similar to a Fokker-Planck equation for a Brownian motion with drift. 

The model is an example of the compressible Kraichnan model for turbulence, see the review \cite{ParticlesandFieldsInFluidTurbulence}, which models the trajectory of a particle in a turbulent fluid, where the fluid is represented by a random evolving velocity field that is white in time. In our case the spatial correlations are taken to be of short length and smooth in space, similar to the case considered in \cite{Gawedzki2004}, where the authors showed that removing the molecular diffusivity, at the same time as reducing the correlation length of the velocity field, with a certain relation holding  between the two, led to sticky interactions between pairs of particles in the limiting process. This result was extended for the model we consider, \cite{Warren2015}, to a full description of the interactions between any number of particles in the limiting process. 

In \cite{QuenchedLocalLimitStochasticFlows} Dunlap and Gu  show that the fluctuations of the density of the flow of kernels are well approximated, at large times, by the product of the heat kernel and the stationary solution to the SPDE solved by the flow of kernels. We are instead interested in the fluctuations of the density in the tail. Barraquand and Le Doussal, \cite{Barraquand_2020} conjecture that, at a cetain distance from the origin, these fluctuations are governed by the Kardar-Parisi-Zhang equation (KPZ equation). We will show that this is indeed the case in a regime in which the strength of the environment is tending to zero.  Further we conjecture that the KPZ equation also appears when the  diffusivity of the molecular motion of the particles is  taken to be small instead of the environment strength.

We now briefly recall  the stochastic heat equation, its connection to the KPZ equation and describe  some previous results on the universality of the KPZ equation. The stochastic heat equation (SHE) is the stochastic partial differential equation, driven by a space-time white noise $\dot{W}$, given below
\begin{align}\label{Intro: SHE}
    \partial_t z = \frac{\nu}{2}\Delta z + \kappa z \dot{W}.
\end{align}
The logarithm of the stochastic heat equation, $h= \frac{\nu}{2\delta} \log z$, is the Cole-Hopf solution to the KPZ equation
\begin{align}
    \partial_t h = \frac{\nu}{2} \Delta h + \delta (\partial_x h)^2 + \kappa \dot{W}.
\end{align}
The KPZ equation describes the interface for surface growth model and is the canonical model in the KPZ universality class \cite{KPZ}. In fact, the KPZ equation  is also a  universal object itself; this is known as weak KPZ universality and it has been shown that a large class of continuous surface growth models lie in this class \cite{hairer_quastel_2018}. In addition, it has been shown that certain observables of some discrete models converge to the KPZ equation, via convergence to the stochastic heat equation. The earliest such result was for the height function of the weakly asymmetric exclusion process \cite{StochasticBurgersandKPZFromParticleSystems}. More recently convergence to the KPZ equation has been shown for the free energy of directed random polymers in the intermediate disorder limit \cite{TheIntermediateDisorderRegimeForDirectedPolymers}. Following this result weak KPZ universality has been shown for a generalisation of ASEP \cite{ASEP(qj)toKPZ}, for a class of weakly asymmetric non-simple exclusion processes \cite{dembo2016weakly}, and for Higher-Spin Exclusion process \cite{KPZfromHSEP} and the related Stochastic 6-vertex model \cite{KPZfromS6V}. Most relevant for us, in \cite{KPZforRWRE} Corwin and Gu studied a discrete version of our model, where the random drift field appears instead as random transition probabilities for a nearest neighbour random walk on $\Z$. They showed that the transition probabilities evaluated in the large deviation regime, after rescaling, converge to the solution to the stochastic heat equation.


 Barraquand and Corwin, \cite{Barraquand2017}, showed that when the transition probabilities in the random walk model are chosen to be Beta distributed the model becomes exactly solvable. They then used their formulae to show that the tail probabilities of the Beta Random walk in a random environment have Tracy-Widom GUE fluctuations of size $N^{1/3}$, placing the model in the universality class of the KPZ fixed point, called strong KPZ universality. The same result is expected to hold for the density of the transition probabilities evaluated at point in the tail, not just for the cumulative tail probabilities \cite{ExactSolutionRWRE1dThieryLeDoussal}.

Barraquand and Rychnovsky, \cite{barraquand2019large}, found similar results exist for sticky Brownian motions and the associated Howitt-Warren flows; once again taking advantage of exact solvability in a special case. Further they conjectured the Howitt-Warren flows (  \cite{HowittWarren}) converge, under rescaling, to the stochastic heat equation, based on the convergence of the moments. Note that by \cite{Warren2015} this special case  of the sticky  flow is not the same one that arises in the limit of our model, but it is reasonable to expect the results to hold more generally.   

In this paper we show that the flow of kernels associated with our model converges to the solution to the stochastic heat equation, when the strength of the random environment  and its correlation length are both taken to zero, with a certain relationship holding between the two.  This result is similar to the result of Corwin and Gu, \cite{KPZforRWRE}, for the discrete random walk model. We will also consider the behaviour of the  second moment of the kernel in  more general regimes; Figure \ref{diagram} shows the behavior we find to hold when  the distance from  the origin and the ratio of the strength of environment to molecular diffusivity  vary.  The stochastic  heat equation occurs (or is conjectured to occur) at a transition between weak  and strong disorder  regimes. In the former the kernels converge to the deterministic heat equation. In the latter the kernels converge to zero.  This is analogous  to  the behaviour of the partition function of a random polymer, \cite{Comets}. 



\subsection{The Model}\label{The model}
Suppose that $W_C$ is a centred Gaussian field with covariance $\E[ W_C(s, x) W_C(t, y) ] = (s\wedge t) C(x-y)$, where $C\in C^\infty_c(\R)$ is positive-definite. Let $\sigma>0$, we are interested in the solutions to the SDE
\begin{align}\label{system of SDEs}
    dX(t) = W_C(dt, X(t)) + \sigma dB(t).
\end{align}
In the above SDE, $B$ is a Brownian motion on $\R$ independent of $W_C$ and both stochastic integrals are understood in the It\^o sense. As mentioned in the introduction, this is a simplified model for a particle trajectory in a turbulent fluid. The fluid is represented by the noise, $W_C$, and $\sigma$ is the molecular diffusivity of the particle itself. Existence and uniqueness of solutions is proved in \cite[Theorem 3.4.1]{Kunita}. In fact, by \cite[Theorem 4.5.1]{Kunita} the solution exists as a stochastic flow, $X_{s,t}(x)$, such that for any $s\geq 0$, the process $(X_{s,t}(x))_{t>s}$ solves the SDE started from $x$ at time $s$.
The solution is distributed as a Brownian motion with diffusivity $\nu:=C(0) + \sigma^2$, which can be checked directly by calculating the quadratic variation and recalling L\'evy's characterisation of Brownian motion. Alternatively, the solution can be thought of as a Brownian motion with diffusivity $\sigma^2$, running through the time dependent Gaussian random field $W_C$, which we can think of as a random velocity field. We then consider the transition probabilities $(U_{s,t})_{s\leq t}$ of the process $(X_{s,t})_{s\leq t}$ conditional on $W_C$:
\begin{equation}\label{Def: Flow of kernels for turbulent SDE}
    U_{s,t}(x, A) := \p(X_{s,t}(x)\in A| \ W),
\end{equation}
Note that the family of kernels $(U_{s,t})_{s<t}$ depend only on the field $W$, and form a stochastic flow of kernels as introduced by Le Jan and Raimond in \cite{lejan2004}. Further, because $X$ is itself a Brownian motion, if we average $U_{s,t}$ over the law of $W$ we get the heat kernel,
\begin{align}\label{average of the flow of kernels}
    \E[U_{s,t}(x,A)] = P_{t-s}(x,A).
\end{align}
Here, $P$ denotes the heat kernel with diffusivity $\nu$, the density of which we will denote by $p_{t-s}(x-y)= \frac{1}{\sqrt{2\pi \nu (t-s)}} e^{-\frac{(x-y)^2}{2\nu (t-s)}}$.\par{}

It is known from \cite{QuenchedLocalLimitStochasticFlows} that the family of probability kernels $(U(s, t, x, dy))_{s< t}$ have continuous densities, $\mathcal{U}(s,t , x, \cdot)$,  with respect to the Lebesgue measure which solve the stochastic partial differential equations
\begin{align}
    \partial_t \mathcal{U} = \frac{\nu}{2} \partial^2_y \mathcal{U} - \partial_y\left(\mathcal{U} \dot{W}_C \right), \label{General SPDE}
\end{align}
together with the initial condition $\mathcal{U}(s,s, x, y) = \delta(x- y)$, where $\delta$ is the Dirac delta. In the above equation, $\dot{W}_C$ is the (formal) time derivative of $W_C$, so that it is white in time and smoothly correlated in space. By solution, we mean it is a generalised solution in the sense of \cite{Kunita1994}, which we describe now by briefly recalling \cite[Proposition 2.1]{QuenchedLocalLimitStochasticFlows}. The process $\mathcal{U}(0,t,x, \cdot)$, considered as a time-indexed family of tempered distributions on $\R$, is the unique solution to (\ref{General SPDE}). That is, for every $s, x\in\R$ and every Schwartz function $f:\R\to \R$ the following equality holds almost surely for every $t>s$
\begin{align}\label{Weak Formulation of u SPDE}
    &\int_\R \mathcal{U}(s,t, x, y) f(y) dy\nonumber\\
    = &f(x) + \frac{\nu}{2}\int_s^t \int_\R \mathcal{U}(s,r, x, y) f''(y)dy dr + \int_s^t \int_\R \mathcal{U}(s, r, x, y) f'(y) W_C(dr, y)dy.    
\end{align}
Here, the stochastic integral is interpreted in the It\^o sense.  \par{}

 The same SPDE, in a slightly different formulation, was derived for the flow of kernels (\ref{Def: Flow of kernels for turbulent SDE}) in \cite[Section 5]{lejan2004}. The solution to the SPDE can be constructed directly in terms of a Wiener chaos expansion, \cite[Theorem 3.2]{IntegrationOfBrownianVectorFields}. Note that (\ref{General SPDE}) is similar to the Fokker-Planck equation for a Brownian motion with diffusivity $\sigma^2$ moving through a velocity field $\dot{W}_C$; however, the coefficient of the Laplacian part is $\nu$ instead of $\sigma^2$ because of the It\^o correction coming from the stochastic integral.

\subsection{The Stochastic Heat Equation as a Limit}\label{The Stochastic Heat Equation as a Limit}

We are interested in the fluctuations of $\mathcal{U}$ at large times far from the origin. Hence, we look first at the effect of diffusive rescaling on the kernel. Define for $\epsilon>0$,
\begin{align*}
    \mathcal{U}^\epsilon(s,t,x,y)= \epsilon^{-1}\mathcal{U}(\epsilon^{-2}s, \epsilon^{-2}t, \epsilon^{-1}x, \epsilon^{-1}y).
\end{align*}
Then it is a straightforward calculation to show that $\mathcal{U}^\epsilon$ is a solution to the transport SPDE (\ref{Weak Formulation of u SPDE}) in which $W_C$ is replaced by its rescaled version $W^{\epsilon}(t,y)= \epsilon W_{C}(\epsilon^{-2}t, \epsilon^{-1}y)$, which has spatial covariance function $C^{\epsilon}(x-y)= C(\epsilon^{-1}(x-y))$. If we take $\epsilon\to 0$ then $\mathcal{U}^\epsilon$ converges to the solution to the Heat equation, as we will see in Proposition \ref{Weak convergence to the heat equation}. This convergence is only in a weak sense in the space variable; Dunlap and Gu, \cite{QuenchedLocalLimitStochasticFlows}, show that there is a non-trivial random fluctuation when the pointwise behaviour is considered. \par{}

Rather than just considering in the diffusive regime, we are also interested in the moderate deviation regime, where $y\approx t^{3/4}$ and Barraquand and Le Doussal, \cite{Barraquand_2020}, conjectured the SHE would appear. Thus, we introduce the rescaled quantity, $\mathcal{V}=\mathcal{V}^{\epsilon, \lambda}$, given by
\begin{align}\label{Def: Tilted kernel}
    \mathcal{V}(s, t, x, y) := e^{\frac{\nu }{2}\lambda^2 (t-s) + \lambda (y-x)} \mathcal{U}^\epsilon(s, t, x+\lambda \nu s, y+\lambda \nu  t)
\end{align}
where $\lambda \in \R$ is a parameter which we will vary with $\epsilon$ to control the distance from the origin. The prefactor is motivated by fixing the expectation of $\mathcal{V}$:
\begin{align*}
    \E\left[\mathcal{V}(s, t, x, y) \right] =& e^{\frac{\nu }{2}\lambda^2 t + \lambda (y-x)} \E\left[ \mathcal{U}^\epsilon(s, t, x, y+\lambda \nu  t)\right] \\
    =& e^{\frac{\nu }{2}\lambda^2 t - \lambda (y-x)} p_{t-s}(x-y)\\
    =& p_{t-s}(x-y).
\end{align*}
In the most part, we will set $s,x=0$ and consider $\mathcal{V}(t, y):= \mathcal{V}(0,t,0,y)$. According to Lemma \ref*{Lemma: SPDE for v} below, $\mathcal{V}$ is the solution to the SPDE,
\begin{equation}\label{SPDE for v}
    \partial_t \mathcal{V} =\frac{\nu}{2} \partial^2_y \mathcal{V} + \lambda \mathcal{V} \dot{W}^{\epsilon} - \partial_y\left( \mathcal{V} \dot{W}^{\epsilon} \right), \quad \mathcal{V}(0,y)= \delta_0(y),
\end{equation}
where, here, $\dot{W}^\epsilon$ is a shifted version of the noise, $W^\epsilon(t, y):= \epsilon \int_0^{\epsilon^{-2}t} W_C(ds, \epsilon^{-1}y+\lambda \epsilon^{-1} \nu s)$. The covariance of the shifted noise is given by $\E\left[ W^\epsilon(t,x) W^\epsilon(s,y) \right] = (s\wedge t) C^\epsilon(x-y)$, so that $W^\epsilon$ is equal in distribution to the $W^\epsilon$ discussed at the start of the section.\par{}
If $\lambda$ is chosen to depend on $\epsilon$ in an appropriate manner, namely $\lambda=\epsilon^{-\frac{1}{2}}$, then $\lambda^2 C^{(\epsilon)}$ approaches a multiple of the Dirac delta distribution, and thus $\dot{W}^{\epsilon}$ approaches a space-time white noise as $\epsilon\to 0$. Le Doussal and Barraquand, \cite{Barraquand_2020}, argue that taking $\epsilon\to 0$ in (\ref{SPDE for v}) will lead to the final term vanishing, suggesting the limit could be the stochastic heat equation.\par{}

One can also consider the moments of $\mathcal{V}$. First consider the moments of $\mathcal{U}$, which have a representation in terms of solutions to (\ref{system of SDEs}). More precisely, if $X^1$ and $X^2$ are two solutions to $(\ref*{system of SDEs})$, each driven by independent Brownian motions $B^1$ and $B^2$ but the same Gaussian velocity field, then for any $f\in C_b(\R^2)$
\begin{align*}
    \E\left[\int_{\R^2} \mathcal{U}(0,t,0,y_1)\mathcal{U}(0,t,0,y_2)f(y_1,y_2)dy\right] = \E\left[ f(X^1(t),X^2(t)) \right].
\end{align*}
Notice that $(X^1(t),X^2(t))_{t\geq 0}$ is a diffusion with generator
\begin{align}\label{X generator}
    \mathcal{G}f= \frac{1}{2}\sum_{i,j=1}^2 (\sigma^2 \delta_{i,j} + C^\epsilon(y_i-y_j))\scndmxPartial{f}{y_i}{y_j}.
\end{align}
To see that this representation is useful, we can note that it can be used to show the convergence towards the heat equation, in the weak spatial sense, simply by showing that $(\epsilon X^1(\epsilon^{-2}t), \epsilon X^2(\epsilon^{-2}t))_{t\geq 0}$ converges towards a Brownian motion on $\R^2$ with diffusivity $\nu$ as $\epsilon \to 0$; we will do this later in propositions \ref{Weak convergence to the heat equation} and \ref{convergence of the 2pt motion}. There is a similar representation for the moments of $\mathcal{V}$,
\begin{align}\label{early2pt}
    &\E\left[\int_{\R^2} \mathcal{V}(t,y_1)\mathcal{V}(t,y_2)f(y_1,y_2)dy \right] = \E\left[ e^{\lambda^2 \int_0^t C^\epsilon(Y^1(s)-Y^2(s))ds} f(Y^1(t),Y^2(t)) \right],
\end{align}
where $Y=(Y^1, Y^2)$ is a diffusion with generator 
\begin{align*}
    \mathcal{G}f= \frac{1}{2}\sum_{i,j=1}^2 (\sigma^2 \delta_{i,j} + C^\epsilon(y_i-y_j))\scndmxPartial{f}{y_i}{y_j} + \sum_{i=1}^2 \lambda C^\epsilon(y_1-y_2)\Partial{f}{y_i}, \quad \text{for } f\in C^2_b(\R^2),
\end{align*}
$\delta_{i,j}$ being the Kronecker delta.\par{}

By taking $\epsilon \to 0$, $Y^1$ and $Y^2$ once again converge to independent Brownian motions, and if we simultaneously take $\lambda=\epsilon^{-\frac{1}{2}}$, then the right hand side of (\ref{early2pt}) approaches 
\begin{align}\label{second moment limit}
    \E\left[ \exp\left(\frac{\kappa^2}{2 \nu} \mathcal{L}_t^0(B^1-B^2)\right) f(B^1(t),B^2(t)) \right],
\end{align}
where $B^1$ and $B^2$ are independent Brownian motions with diffusivity $\nu$, $\mathcal{L}^0_t(B^1-B^2)$ is the local time at $0$ of the process $B^1-B^2$ and $\kappa^2= \nu \int_\R\frac{C(y)}{\sigma^2+ C(0) - C(y)}dy$. This is exactly the form of the second moment for the stochastic heat equation, (\ref{Intro: SHE}). Note that, importantly, this prediction for the coefficient, $\kappa^2$, of the noise is different from that which one would obtain simply by neglecting the transport noise in (\ref{SPDE for v}). The latter would be $\int_\R C(y)dy$, which is strictly smaller. We can extend this argument by showing the convergence of the higher moments as well. However, the distribution of the stochastic heat equation is not determined by its moments, and  unfortunately, we do not have any proof of the convergence of $\mathcal{V}$ to a solution to the stochastic heat equation. \par{}

We consider a more general setup where we change the balance between the effects of the environment and the molecular diffusivity as we vary $\epsilon$. More precisely, we consider $\mathcal{V}$ solving (\ref{SPDE for v}) with noise $\dot{W}^\epsilon$ having covariance $C^\epsilon(\cdot)= \mu(\epsilon)^2 C(\epsilon^{-1}\cdot)$, where $C$ is a fixed positive definite function, as before. We can also let the molecular diffusivity, $\sigma(\epsilon)$, depend on $\epsilon$ and then $\nu = \sigma(\epsilon)^2+ \mu(\epsilon)^2C(0)$. Two regimes are of interest, in addition to case considered previously, where $\mu(\epsilon)\equiv 1$ and $\sigma(\epsilon)^2=\sigma^2$. The weak environment setting, where $\mu(\epsilon)$ tends to $0$ and $\sigma(\epsilon)$ tends to some $\sigma>0$ as $\epsilon \to 0$; and the weak diffusivity, where $\mu(\epsilon)$ tends to $1$ and $\sigma(\epsilon)$ tends to $0$ as $\epsilon\to 0$.

In the weak environment setting, we will prove the convergence of $\mathcal{V}^\epsilon:= \mathcal{V}^{\epsilon, \lambda(\epsilon)}$ towards the solution to the stochastic heat equation when $\lambda(\epsilon)$ is chosen such that $\lambda(\epsilon) \mu(\epsilon) \epsilon^{\frac{1}{2}}$ converges to a (non-zero) constant. In fact, we can couple the sequence of solutions, $\mathcal{V}$, to (\ref{SPDE for v}) to the limiting solution of the stochastic heat equation. This is achieved by constructing the driving noises from a common space-time white noise, through appropriate mollifications.

Let $W$ be a cylindrical Brownian motion on $L^2(\R)$, and suppose that $C$ can be written as $C=\rho*\rho$ for a symmetric mollifier $\rho$. Denote $C^\epsilon(x-y):= \mu(\epsilon)^2 C(\epsilon^{-1}(x-y))$ and $\rho_\epsilon(y):= \epsilon^{-\frac{1}{2}} \mu(\epsilon) \rho(\epsilon^{-1}y)$, so that $C^\epsilon= \rho_\epsilon*\rho_\epsilon$. Define $W^\epsilon:= \rho_\epsilon*W$, so that $W^\epsilon$ is a centred Gaussian field that is Brownian in time with spatial covariance function $C^\epsilon$. In the following theorem, we will consider solutions to (\ref{SPDE for v}), $\mathcal{V}$, in the weak environment setting where $\mu(\epsilon)$ is taken to be converging to $0$ as $\epsilon$ tends to $0$. Note that whilst we have coupled the solutions, $\mathcal{V}$, they are no longer connected by the diffusive scalings and shifts that we discussed earlier. 
\begin{theorem}\label{Thm:1}
    Suppose that $\mu(\epsilon)= o\left( \log(\epsilon^{-1})^{-\frac{1}{2}} \right)$ and $\lambda(\epsilon)\mu(\epsilon)\epsilon^{\frac{1}{2}}(\int_\R C(y)dy)^{1/2}\to \kappa$ as $\epsilon \to 0$. Let $\mathcal{V}$ be the solution to the SPDE
    \begin{align}\label{SPDE}
        \partial_t \mathcal{V} = \frac{\nu}{2}\partial^2_y \mathcal{V} + \lambda(\epsilon) \mathcal{V} \dot{W}^\epsilon - \partial_y\left(\mathcal{V} \dot{W}^\epsilon\right), \quad \mathcal{V}(0,\cdot)=\delta_0,
    \end{align}
    where $\nu>0$ is constant. Then, for any $t>0$,  as $\epsilon$ tends to zero, the following convergence holds in $L^2(\p \times \R)$
    \begin{align*}
        \mathcal{V}(t) \rightarrow \mathcal{Z}(t),
    \end{align*}
    where $\mathcal{Z}\in C((0,T);C(\R))$ denotes  the solution to the stochastic heat equation,
    \begin{align}
        \partial_t \mathcal{Z} = \frac{\nu}{2}\partial^2_y \mathcal{Z} + \kappa \mathcal{Z} \dot{W}, \quad \mathcal{Z}(0,\cdot)=\delta_0.
    \end{align}
\end{theorem}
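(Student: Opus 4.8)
The plan is to exploit the fact that both $\mathcal{V}$ and $\mathcal{Z}$ admit Wiener chaos expansions in the common driving noise $W$, and to compare them term by term. Writing $\mathcal{V}(t,y) = \sum_{n\ge 0} I_n(f_n^\epsilon(t,y,\cdot))$ and $\mathcal{Z}(t,y) = \sum_{n\ge 0} I_n(g_n(t,y,\cdot))$ as iterated stochastic integrals against $W$, the $L^2(\p)$ distance decomposes by It\^o isometry as $\|\mathcal{V}(t,y)-\mathcal{Z}(t,y)\|_{L^2(\p)}^2 = \sum_n n! \|f_n^\epsilon(t,y,\cdot) - g_n(t,y,\cdot)\|_{L^2}^2$, and integrating in $y$ gives the $L^2(\p\times\R)$ norm. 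So the strategy is: (i) derive explicit formulae for the kernels $f_n^\epsilon$ and $g_n$; (ii) show each $f_n^\epsilon \to g_n$ in the appropriate $L^2$ as $\epsilon\to 0$; (iii) obtain a summable-in-$n$, uniform-in-$\epsilon$ bound on $n!\|f_n^\epsilon\|^2$ so that dominated convergence closes the argument.

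For step (i), iterating the mild (Duhamel) form of the SPDE \eqref{SPDE} shows that $f_n^\epsilon$ is built from alternating heat-kernel propagators $P^\nu$ and the two vertex operations coming from the right-hand side: multiplication by $\lambda(\epsilon)$ convolved against $\rho_\epsilon$, and the derivative vertex $-\partial_y(\,\cdot\,\rho_\epsilon)$. Concretely, a term in $f_n^\epsilon$ is a time-ordered integral over $0<s_1<\dots<s_n<t$ of a product of $n+1$ heat kernels with $n$ insertions, each insertion being either $\lambda(\epsilon)\rho_\epsilon$ (a "creation" vertex) or a spatial derivative hitting the adjacent kernels together with a $\rho_\epsilon$ factor (a "transport" vertex); each $I_n$-kernel is the sum over the $2^n$ choices of vertex types. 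The limiting kernel $g_n$ for $\mathcal{Z}$ is the pure product of heat kernels with $n$ insertions of $\kappa\,\delta$. The key heuristic — already flagged in the excerpt around \eqref{second moment limit} — is that $\lambda(\epsilon)^2 C^\epsilon \to \kappa^2\delta$ while $\mu(\epsilon)\to 0$ kills the transport vertices at the level of the $L^2$ norm, but only after the transport vertices' contribution has effectively renormalised the creation-vertex constant from $\int C$ to the larger $\kappa^2 = \nu\int C/(\sigma^2+C(0)-C)$; here, however, because we are in the weak-environment regime and have already absorbed the scaling into $\lambda(\epsilon)\mu(\epsilon)\epsilon^{1/2}\to\kappa/(\int C)^{1/2}$, the surviving constant is exactly $\kappa$. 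I would make this precise by showing that any chaos kernel containing at least one transport vertex has $L^2$ norm $O(\mu(\epsilon)^{a})$ for some $a>0$ (using that $\partial_y$ against a heat kernel over a time gap $\delta s$ costs $(\delta s)^{-1/2}$, which is integrable against the remaining structure), while the all-creation-vertex kernel converges to $g_n$ by the approximate-identity property of $\lambda(\epsilon)^2\rho_\epsilon*\rho_\epsilon$.

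For step (iii), the uniform bound is where the hypothesis $\mu(\epsilon) = o(\log(\epsilon^{-1})^{-1/2})$ enters and is, I expect, the main obstacle. Each transport vertex carries a derivative, and controlling $\|f_n^\epsilon\|_{L^2}$ requires bounding integrals of the form $\int_{0<s_1<\dots<s_n<t} \prod (\text{heat kernels and their derivatives at mollified points})$; the derivative vertices produce, per vertex, a factor that after integration looks like $C\log(\epsilon^{-1})$ or a negative power of the adjacent time gap, and the mollifier at scale $\epsilon$ caps the worst divergence at scale $\log(\epsilon^{-1})$. One then gets a bound like $n!\|f_n^\epsilon\|_{L^2}^2 \le \frac{(C t)^n}{n!}\bigl(1 + \mu(\epsilon)^2\log(\epsilon^{-1})\bigr)^{n}\,(\text{const})$ or similar, so the smallness condition $\mu(\epsilon)^2\log(\epsilon^{-1})\to 0$ is exactly what is needed to keep the series summable uniformly in $\epsilon$ and to force every transport-vertex contribution to vanish in the limit. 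Assembling (i)--(iii): for fixed $N$, $\sum_{n\le N} n!\|f_n^\epsilon-g_n\|_{L^2(\R)}^2\to 0$ by the term-by-term convergence; the tail $\sum_{n>N}$ is bounded uniformly in $\epsilon$ by the summable dominating bound and by the known chaos bound for $\mathcal{Z}$, hence small for $N$ large; combining gives $\mathcal{V}(t)\to\mathcal{Z}(t)$ in $L^2(\p\times\R)$. I would also need to invoke the well-posedness and chaos representation of $\mathcal{Z}$ (standard, e.g.\ via \cite{QuenchedLocalLimitStochasticFlows} or Walsh theory) and of $\mathcal{V}$ (from the earlier sections and the Wiener-chaos construction cited in the excerpt), and check that the delta initial condition is harmless because $t>0$ is fixed and the heat kernel regularises.
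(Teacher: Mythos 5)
Your chaos-expansion argument is a legitimate route but differs substantially from the paper's proof, which does not expand in Wiener chaoses at all. The paper instead works with the mild equations for $\mathcal{V}$ and for an intermediate approximation $\mathcal{Z}_\epsilon$ (the SHE driven by the mollified noise $W^\epsilon$ rather than by the white noise $W$), derives a Gr\"onwall-type inequality in the \emph{mollified} norm $\|\cdot\|_{2,\rho_\epsilon}$, and obtains the bound $\E[\|\mathcal{V}(t)-\mathcal{Z}_\epsilon(t)\|^2_{2,\rho_\epsilon}]\lesssim \lambda(\epsilon)^{-2}\bigl(\epsilon\log(\epsilon^{-1})+\mu(\epsilon)^2\log(\epsilon^{-1})\bigr)$ (Proposition \ref{Prop: Gronwall}). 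Because that only controls the mollified distance, the paper needs a separate argument to pass to $L^2(\R)$: it shows that $\kappa_\epsilon^{-1}\lambda(\epsilon)\mathcal{V}*\rho_\epsilon$ is close in $L^2$ to $\mathcal{V}$ itself by studying the density $q^{\lambda(\epsilon)}$ of the difference of the two-point motion and proving its equicontinuity and pointwise bounds (Propositions \ref{estimate on q}, \ref{Upper bound on q kappa}, \ref{q lambda is equicontinuous}). Your chaos-by-chaos scheme would compare kernels directly in $L^2(\R)$ and thus avoids both the mollified-norm detour and the $q^{\lambda(\epsilon)}$ analysis entirely; the price is a uniform-in-$n$ control over the $2^n$ vertex assignments, which the paper's Gr\"onwall iteration handles implicitly. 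Both proofs locate the source of the hypothesis $\mu(\epsilon)=o(\log(\epsilon^{-1})^{-1/2})$ in the same place: the squared derivative heat kernel is logarithmically divergent near the diagonal, and the mollifier cuts this off at scale $\epsilon$ (in the paper this is Lemma \ref{bound for the third line}; in your sketch it is the per-vertex $\mu(\epsilon)^2\log(\epsilon^{-1})$ factor). One point to tighten in your write-up: a single derivative vertex costs $(\delta s)^{-1/2}$ in $L^1$, but after squaring for the It\^o isometry it costs $(\delta s)^{-1}$, which is \emph{not} integrable over the adjacent time gap --- so the $\epsilon$-cutoff and the resulting $\log(\epsilon^{-1})$ are forced, not merely a worst case. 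Your later paragraph gets this right, but the parenthetical ``(which is integrable against the remaining structure)'' in step (ii) is misleading and would send a reader down the wrong path before they reach step (iii).
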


The convergence in $L^2(\p \times \R)$, established in Theorem \ref{Thm:1}, implies convergence in distribution of $\mathcal{V}(t)$ as random elements of $L^2(\R)$. If we adjust the definition of ${\mathcal U}$ to account for the weak environment ( noting this makes $\mathcal{U}$ depend on $\epsilon$)   then   \eqref{Def: Tilted kernel}   holds as an equality in distribution,  and  we have the following corollary.
\begin{corollary}
Suppose that ${\mathcal U}$ solves 
\[
 \partial_t \mathcal{U} = \frac{\nu}{2} \partial^2_y \mathcal{U} - \mu(\epsilon) \partial_y\left(\mathcal{U} \dot{W}_C \right),\quad \mathcal{U}(0,\cdot)=\delta_0,
 \]
 where $\mu(\epsilon)= o\left( \log(\epsilon^{-1})^{-\frac{1}{2}} \right)$. Let $\lambda(\epsilon)$ be chosen so that $\lambda(\epsilon)\mu(\epsilon)\epsilon^{\frac{1}{2}}\to \kappa \left(\int_\R C(y)dy\right)^{-1/2}$ as $\epsilon \to 0$. Then, 
 \[
y\mapsto \epsilon^{-1} \exp\bigl({\lambda(\epsilon)^2\nu t/2 + \lambda(\epsilon)y }\bigr) \mathcal{U}\bigl( \epsilon^{-2}t, \epsilon^{-1}(y+\lambda(\epsilon) \nu  t)\bigr)
 \]
 converges in distribution as a random  element of $L^2({\mathbb R})$ to $\mathcal{Z}(t)$, where ${\mathcal Z}$ solves the SHE with parameters $\nu$ and $\kappa$.
\end{corollary}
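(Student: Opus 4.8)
The plan is to recognise the displayed random function as (a version of) the solution $\mathcal{V}$ of \eqref{SPDE} from Theorem \ref{Thm:1}, after which the corollary follows from that theorem together with the elementary implication that convergence in $L^2(\p\times\R)$ implies convergence in distribution in $L^2(\R)$.

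First I would redo the scaling-and-tilt bookkeeping of Section \ref{The Stochastic Heat Equation as a Limit} for the weak-environment kernel. Setting $\mathcal{U}^\epsilon(s,t,x,y)=\epsilon^{-1}\mathcal{U}(\epsilon^{-2}s,\epsilon^{-2}t,\epsilon^{-1}x,\epsilon^{-1}y)$, the same straightforward computation shows $\mathcal{U}^\epsilon$ solves the transport SPDE \eqref{General SPDE} driven by a noise with spatial covariance $C^\epsilon(\cdot)=\mu(\epsilon)^2 C(\epsilon^{-1}\cdot)$: the diffusive rescaling sends $W_C$ to $\epsilon\,W_C(\epsilon^{-2}\cdot,\epsilon^{-1}\cdot)$, which has spatial covariance $C(\epsilon^{-1}\cdot)$, and the factor $\mu(\epsilon)$ in front of the transport term supplies the remaining $\mu(\epsilon)^2$. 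Applying to $\mathcal{U}^\epsilon$ the exponential change of variables of Lemma \ref{Lemma: SPDE for v} (that is, \eqref{Def: Tilted kernel} with $s=x=0$) and using that the shift $y\mapsto y+\lambda(\epsilon)\nu t$ is deterministic, hence does not alter the law of the noise, one gets that
\[
 \widetilde{\mathcal{V}}^\epsilon(t,y):=\epsilon^{-1}\exp\!\bigl(\lambda(\epsilon)^2\nu t/2+\lambda(\epsilon)y\bigr)\,\mathcal{U}\bigl(\epsilon^{-2}t,\epsilon^{-1}(y+\lambda(\epsilon)\nu t)\bigr)
\]
solves \eqref{SPDE} with $\nu>0$ the given constant and with a driving noise $\dot{W}^\epsilon$ that is centred, Brownian in time, and has spatial covariance $C^\epsilon=\mu(\epsilon)^2C(\epsilon^{-1}\cdot)$; moreover $\widetilde{\mathcal{V}}^\epsilon(0,\cdot)=\delta_0$. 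The one point deserving attention is that the $\mu(\epsilon)^2$ appearing here is exactly what makes the corollary's normalisation $\lambda(\epsilon)\mu(\epsilon)\epsilon^{1/2}\to\kappa(\int_\R C(y)\,dy)^{-1/2}$ agree with the hypothesis $\lambda(\epsilon)\mu(\epsilon)\epsilon^{1/2}(\int_\R C(y)\,dy)^{1/2}\to\kappa$ of Theorem \ref{Thm:1}, and that the condition $\mu(\epsilon)=o(\log(\epsilon^{-1})^{-1/2})$ is the same in both places.

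Next I would show $\widetilde{\mathcal{V}}^\epsilon(t)\stackrel{d}{=}\mathcal{V}(t)$, where $\mathcal{V}$ is the solution of \eqref{SPDE} considered in Theorem \ref{Thm:1}. The noise driving \eqref{SPDE} there is $\rho_\epsilon*W$ for a cylindrical Brownian motion $W$, and it also has spatial covariance $C^\epsilon$; so (under the assumption $C=\rho*\rho$ of Theorem \ref{Thm:1}) the two driving noises are equal in law. Since for each $\epsilon$ the solution of \eqref{SPDE} is a fixed measurable functional of its noise --- via the Wiener chaos construction of \cite[Theorem 3.2]{IntegrationOfBrownianVectorFields}, or by uniqueness of the generalised solution in the sense of \eqref{Weak Formulation of u SPDE} --- equality in law of the noises yields $\widetilde{\mathcal{V}}^\epsilon(t)\stackrel{d}{=}\mathcal{V}(t)$ as random elements of $L^2(\R)$ for each fixed $t>0$; both are genuine elements of $L^2(\R)$ for $t>0$ because the kernels have continuous densities \cite{QuenchedLocalLimitStochasticFlows}. (This is the precise meaning of the remark preceding the corollary that \eqref{Def: Tilted kernel} holds as an equality in distribution once $\mathcal{U}$ is adjusted for the weak environment.)

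Finally, Theorem \ref{Thm:1} gives $\mathcal{V}(t)\to\mathcal{Z}(t)$ in $L^2(\p\times\R)=L^2(\p;L^2(\R))$, so $\|\mathcal{V}(t)-\mathcal{Z}(t)\|_{L^2(\R)}\to0$ in $L^2(\p)$ and hence in probability; thus $\mathcal{V}(t)\to\mathcal{Z}(t)$ in distribution in $L^2(\R)$, and combining with the previous step, $\widetilde{\mathcal{V}}^\epsilon(t)\to\mathcal{Z}(t)$ in distribution in $L^2(\R)$, which is the assertion. I do not expect a genuine analytic obstacle: all the real work is in Theorem \ref{Thm:1}, and the only thing to be careful about is the routine but easy-to-slip verification that the diffusive rescaling composed with the exponential tilt of the weak-environment kernel reproduces \eqref{SPDE} with precisely the coefficients and normalisation that Theorem \ref{Thm:1} requires.
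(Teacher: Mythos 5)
Your proposal matches the paper's intended proof exactly: the paper states the corollary as an immediate consequence of Theorem \ref{Thm:1} once one observes that the diffusive rescaling plus exponential tilt of the weak-environment kernel yields (in distribution) the $\mathcal{V}$ of the theorem, and that $L^2(\p\times\R)$ convergence implies convergence in distribution in $L^2(\R)$. Your bookkeeping, including the matching of the two equivalent normalisations of $\lambda(\epsilon)\mu(\epsilon)\epsilon^{1/2}$, is correct, and you have filled in the details the paper leaves implicit.
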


This extends simply to multiple times, and it is natural to expect that in fact $\bigl(\mathcal{V}(t)\bigr)_{t > 0}$ is converging as a process. We have not established the tightness needed to prove this however. 

Turning to the weak diffusivity regime, we can once again argue at the level of the moments and  show that if we set $\mu(\epsilon)=1$ and suppose that $\sigma(\epsilon)$ tends to $0$ slowly enough that $\sigma(\epsilon)\epsilon^{-1}$ tends to infinity as $\epsilon$ tends to $0$, and we choose $\lambda(\epsilon)$ such that $\lambda(\epsilon) \sigma(\epsilon)^{-\frac{1}{2}} \epsilon^{\frac{1}{2}}$ is converging to a positive constant, $c$, then the second moment of $\mathcal{V}$ converges  to (\ref{second moment limit}), this time  with $\kappa^2 = \sqrt{2} c  \nu \frac{ C(0)}{|C''(0)|^{1/2}}$. We will justify this prediction for $\kappa$ at the end of Section \ref{2pt The general case}. The nature of this convergence is expected to be quite different from in the weak environment setting. Firstly, some smoothing in the spatial variable will be needed to control  oscillations in $ \mathcal{V}$. Secondly,  if the pair  $\bigl( \mathcal{V}, W^\epsilon\bigr)$   converges in distribution (possibly down some subsequence) then the limit of the noises $W^\epsilon$  is not the noise driving the limiting SHE.  


We summarize our  predictions for  general choices of $\mu(\epsilon), \lambda(\epsilon)$ and $\sigma(\epsilon)$ in Figure \ref{diagram}. Let
\begin{align*}
    &\beta = -\lim_{\epsilon\to 0} \frac{\log(\lambda(\epsilon))}{\log(\epsilon)},\\
    &\alpha = -\lim_{\epsilon \to 0} \frac{\log(\mu(\epsilon) \sigma(\epsilon)^{-1})}{\log(\epsilon)},
\end{align*}
where we suppose that the above limits exist and that $\beta$ is positive.
\begin{figure}[htbp]
    \centerline{\includegraphics[scale=0.7]{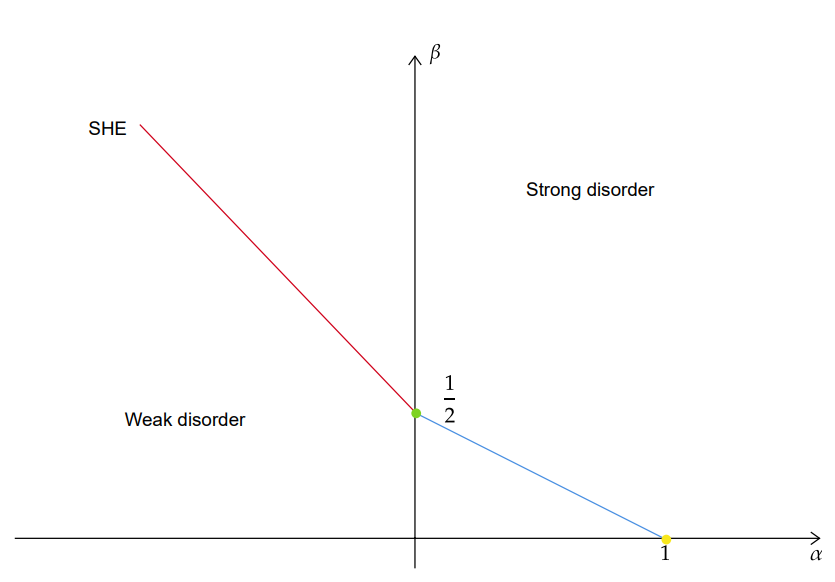}}
    \caption{  The limiting behaviour of the   kernel $\mathcal{V}$ for varying values of the exponents $\alpha$ and $\beta$. The convergence to the SHE proven in Theorem \ref{Thm:1}  corresponds the line segment $\beta = \frac{1}{2}- \alpha$ in the left quadrant; convergence to the SHE is conjectured for the line segment $\beta = \frac{1}{2}(1-\alpha)$ in the right quadrant, including  at  the point $(0, \tfrac{1}{2})$  which corresponds to  diffusive scaling of ${\mathcal U}$.   Below these line segments is the weak disorder regime in which  the kernels converge to the  deterministic  heat equation (Propositions \ref{Weak convergence to the heat equation} and \ref{Thm: HE limit}). The limit of the kernels  is a sticky flow at $(1,0)$ ; for $\alpha>1$ and  $\beta =0$ the limit  is the Arratia flow of coalescing Brownian motions. Above the line segments lies the strong disorder regime in which the limit of the kernels is conjectured to be zero; partially proven in  Proposition \ref{Above the line}. }
    \label{diagram}
\end{figure}

The next two results concern the behaviour of $\mathcal{V}$ in the  weak disorder regime, which in Figure \ref{diagram} corresponds to the region below the line segments.
\begin{proposition}\label{Weak convergence to the heat equation}
    Suppose we are in the setting of the preamble to Theorem \ref{Thm:1}. Suppose further that $\mu(\epsilon)$ is bounded and $\lambda(\epsilon) \mu(\epsilon) \epsilon^{\frac{1}{2}}\sigma^{-\tfrac{1}{2}}_\epsilon$ tends to $0$ as $\epsilon\to 0$. For any $t>0$ and $f\in C^2_b(\R)$ we have
    \begin{align*}
        \int_\R \mathcal{V}(t,y) f(y)dy \to \int_\R p_t(y) f(y)dy \ \text{in } L^2(\p), \ \text{as } \epsilon\to 0.
    \end{align*}
    As previously, $p_t(y)= \frac{1}{\sqrt{2\pi\nu t}} e^{-\frac{y^2}{2\nu t}}$ denotes the density of the heat kernel.
\end{proposition}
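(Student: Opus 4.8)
The plan is to work entirely at the level of second moments, exploiting the representation \eqref{early2pt} and the fact that $L^2(\p)$ convergence of $\int \mathcal{V}(t,y)f(y)dy$ to the deterministic limit $\int p_t(y)f(y)dy$ is equivalent to showing that the first moment converges to $\int p_t f$ (which is immediate, since $\E[\mathcal{V}(t,y)]=p_t(y)$ and this does not even require the weak-disorder hypothesis) and that the second moment $\E[(\int \mathcal{V}(t,y)f(y)dy)^2]$ converges to $(\int p_t f)^2$. So the whole content is the convergence of the second moment.

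For the second moment, apply \eqref{early2pt} with $f(y_1,y_2)=f(y_1)f(y_2)$:
\begin{align*}
    \E\left[\left(\int_\R \mathcal{V}(t,y)f(y)dy\right)^2\right] = \E\left[ e^{\lambda(\epsilon)^2 \int_0^t C^\epsilon(Y^1(s)-Y^2(s))ds}\, f(Y^1(t))f(Y^2(t)) \right],
\end{align*}
where $Y=(Y^1,Y^2)$ is the two-point diffusion with generator as displayed before \eqref{early2pt} (now with $C^\epsilon=\mu(\epsilon)^2C(\epsilon^{-1}\cdot)$ and diffusivity parameter $\sigma_\epsilon$). I would first show that, under the stated hypotheses, the exponential weight tends to $1$ in an appropriate sense, and then that $(Y^1(t),Y^2(t))$ converges in distribution to a pair of independent $\nu$-Brownian motions so that the right-hand side tends to $(\int p_t f)^2$. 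For the weight, the key estimate is on the expected additive functional $\E\big[\lambda(\epsilon)^2\int_0^t C^\epsilon(Y^1(s)-Y^2(s))ds\big]$. The difference $D=Y^1-Y^2$ is itself a one-dimensional diffusion with a diffusion coefficient of order $\sigma_\epsilon^2+\mu(\epsilon)^2(C(0)-C(\epsilon^{-1}D))$ — bounded below by a constant multiple of $\sigma_\epsilon^2$ near the origin and bounded above by $2\nu$ — and with a bounded drift; the occupation density of $D$ in a window of width $O(\epsilon)$ around the origin over time $t$ is therefore $O(\epsilon/\sigma_\epsilon)$ (heuristically, local time at $0$ scaled by the reciprocal of the diffusivity, which is $\gtrsim \sigma_\epsilon^2$, times the window width $\epsilon$; more carefully one uses that $\int_0^t C^\epsilon(D(s))ds$ is controlled by $\mu(\epsilon)^2 \epsilon \,\sigma_\epsilon^{-1}$ times a quantity with bounded expectation via an occupation-times / Green's function bound for $D$). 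Hence $\E\big[\lambda(\epsilon)^2\int_0^t C^\epsilon(D(s))ds\big] = O\big(\lambda(\epsilon)^2\mu(\epsilon)^2\epsilon\,\sigma_\epsilon^{-1}\big)$, which is exactly $\big(\lambda(\epsilon)\mu(\epsilon)\epsilon^{1/2}\sigma_\epsilon^{-1/2}\big)^2$ up to constants and so tends to $0$ by hypothesis. Using $e^x - 1 \le x e^x$ for $x\ge 0$ together with an exponential-moment bound on the functional (Khasminskii's lemma, valid once the expectation is small), the weight converges to $1$ in $L^1$, and in fact with enough uniform integrability to combine with the weak convergence of $(Y^1(t),Y^2(t))$.

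The convergence $(Y^1(\cdot),Y^2(\cdot))\Rightarrow$ independent $\nu$-Brownian motions is a routine martingale-problem / tightness argument: the martingale part of $Y^i$ has bracket $\nu t$ exactly (since $\sigma_\epsilon^2+C^\epsilon(0)=\nu$), the drift terms $\lambda(\epsilon)C^\epsilon(Y^1-Y^2)$ are supported on the $O(\epsilon)$-neighbourhood of the diagonal where, by the occupation bound above, the process spends asymptotically no time weighted appropriately, so the drifts vanish in the limit; and the cross-bracket $\langle Y^1,Y^2\rangle_t=\int_0^t C^\epsilon(Y^1-Y^2)ds\to 0$ by the same occupation estimate. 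One then identifies any subsequential limit as solving the martingale problem for two independent $\nu$-Brownian motions. Since this is precisely the content promised for Proposition \ref{convergence of the 2pt motion} referenced in the text, I would invoke (a suitable form of) it here. Finally, assembling: $\E[(\int\mathcal{V} f)^2]\to(\int p_t f)^2$ and $\E[\int\mathcal{V} f]\to\int p_t f$ give $\mathrm{Var}(\int\mathcal{V}(t,y)f(y)dy)\to 0$, which is the claimed $L^2(\p)$ convergence.

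\textbf{Main obstacle.} The delicate point is the occupation-time estimate for the two-point difference $D=Y^1-Y^2$ near the diagonal and the consequent control of $\lambda(\epsilon)^2\int_0^t C^\epsilon(D(s))ds$: one needs a bound that is uniform in $\epsilon$ and that correctly captures the $\epsilon/\sigma_\epsilon$ scaling of the time spent in the interaction region, because the diffusion coefficient of $D$ itself degenerates like $\sigma_\epsilon^2$ there (when $\sigma_\epsilon\to 0$) so naive heat-kernel bounds are not uniform. This is handled by a comparison/Green's-function argument for $D$ — writing the additive functional's expectation as an integral of $C^\epsilon$ against the Green's function of $D$ killed at time $t$, and bounding that Green's function using the ellipticity constant $\sigma_\epsilon^2$ of $D$ — and then promoting the mean bound to an exponential-moment bound via Khasminskii's lemma to get the uniform integrability needed to pass to the limit in the product with $f(Y^1(t))f(Y^2(t))$.
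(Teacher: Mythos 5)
Your proposal is correct and follows essentially the same route as the paper's: both reduce the claim to the two-point-motion representation of the second moment, both show $\lambda(\epsilon)^2\int_0^t C^\epsilon(Y^1(s)-Y^2(s))\,ds\to 0$ via an occupation-time estimate for $D=Y^1-Y^2$ that captures the $\mu(\epsilon)^2\epsilon/\sigma_\epsilon$ scaling, and both then invoke the convergence of $(Y^1,Y^2)$ to independent $\nu$-Brownian motions (Proposition~\ref{convergence of the 2pt motion}). The only cosmetic difference is how the exponential weight is controlled: the paper bounds all $k$-th moments of the additive functional via Jensen's inequality and uniform local-time moment bounds, whereas you invoke Khasminskii's lemma once the mean is small — both deliver the needed $L^p$ convergence of the exponential to $1$.
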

Notice that the above result involves  spatial averaging; we expect that there are random fluctuations  of the type described by Dunlap and Gu, \cite{QuenchedLocalLimitStochasticFlows}, which mean that pointwise convergence fails. However, in the weak environment setting these fluctuations vanish in the limit, and we obtain the following result.
\begin{proposition}\label{Thm: HE limit}
    Suppose we are in the setting of the preamble to Theorem \ref{Thm:1}. Suppose further that $\mu(\epsilon) = o((\log(\epsilon^{-1}))^{-\frac{1}{2}})$ and $\lambda(\epsilon)\mu(\epsilon)\epsilon^{\frac{1}{2}} \to 0$ as $\epsilon \to 0$. Then, for each $t>0$, the following convergence holds in $L^2(\p\times \R)$
    \begin{align*}
        \mathcal{V}(t) \to p_t, \text{ as } \epsilon\to 0.
    \end{align*}
\end{proposition}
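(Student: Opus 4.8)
The plan is to reduce the claim to a single second-moment computation, which I would then carry out via a Wiener chaos expansion; note that this is essentially the degenerate $\kappa=0$ case of Theorem \ref{Thm:1}, so the argument is a simplification of the proof of that theorem. Since the two stochastic-integral terms in (\ref{SPDE}) are It\^o integrals, $\E[\mathcal V(t,y)]=p_t(y)$, and as $p_t$ is deterministic, Fubini gives, for each fixed $\epsilon>0$ (once the left-hand side is known to be finite),
\[
\E\int_\R\bigl(\mathcal V(t,y)-p_t(y)\bigr)^2\,dy=\E\int_\R\mathcal V(t,y)^2\,dy-\int_\R p_t(y)^2\,dy .
\]
Hence it suffices to prove $\E\|\mathcal V(t)\|_{L^2(\R)}^2\to\|p_t\|_{L^2(\R)}^2=(4\pi\nu t)^{-1/2}$ as $\epsilon\to0$.

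To this end I would write $W^\epsilon=\rho_\epsilon*W$ and iterate the Duhamel form of (\ref{SPDE}),
\[
\mathcal V(t,y)=p_t(y)+\int_0^t\!\!\int_\R\bigl(\lambda(\epsilon)\,p_{t-s}(y-z)-\partial_y p_{t-s}(y-z)\bigr)\,\mathcal V(s,z)\,W^\epsilon(ds,dz),
\]
to obtain the chaos expansion $\mathcal V(t,y)=\sum_{n\ge0}I_n(f^{t,y}_n)$, where $I_n$ is the $n$-th Wiener chaos integral against $W$, $f^{t,y}_0=p_t(y)$, and, for $n\ge1$, $f^{t,y}_n$ is an explicit $n$-fold iterated integral over a time simplex and $\R^n$ of a product of shifted heat kernels $p_u$, their spatial derivatives $\partial_y p_u$, the constant $\lambda(\epsilon)$, and $n$ factors $\rho_\epsilon$. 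By the chaos isometry,
\[
\E\|\mathcal V(t)\|_{L^2(\R)}^2=\|p_t\|_{L^2(\R)}^2+R(\epsilon),\qquad R(\epsilon):=\sum_{n\ge1}\int_\R\E\bigl[I_n(f^{t,y}_n)^2\bigr]\,dy ,
\]
so the task becomes: show $R(\epsilon)<\infty$ for each $\epsilon>0$ and $R(\epsilon)\to0$ as $\epsilon\to0$.

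Integrating out the $\rho_\epsilon$'s replaces each pairing by the kernel $C^\epsilon(\cdot)=\mu(\epsilon)^2C(\epsilon^{-1}\cdot)$, which is concentrated within $O(\epsilon)$ of the diagonal with total mass $\epsilon\mu(\epsilon)^2\int_\R C$ and value $\mu(\epsilon)^2C(0)$ at the origin. For $n=1$ a direct computation should give $\int_\R\E[I_1(f^{t,y}_1)^2]\,dy\le C\bigl((\lambda(\epsilon)\mu(\epsilon)\epsilon^{1/2})^2+\mu(\epsilon)^2\bigr)$: the $\lambda^2$-contribution equals $(\lambda\mu\epsilon^{1/2})^2$ times the finite integral $\int_0^t(s(t-s))^{-1/2}\,ds=\pi$, while in the $(\partial_y p)^2$-contribution the apparent $(t-s)^{-3/2}$ singularity is cut off at the scale $t-s\sim\epsilon^2$ of $C^\epsilon$, so that the ranges $t-s\lesssim\epsilon^2$ and $t-s\gtrsim\epsilon^2$ each contribute $O(\mu^2)$. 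For $n\ge2$ one estimates the $2n$-fold integral by gluing the two copies of the iterated integral along the $C^\epsilon$'s and applying the same near-diagonal bounds at each level, which yields $\int_\R\E[I_n(f^{t,y}_n)^2]\,dy\le c_n\,g(\epsilon)^n$ with $\sum_n c_n<\infty$ and $g(\epsilon)=C\bigl((\lambda(\epsilon)\mu(\epsilon)\epsilon^{1/2})^2+\mu(\epsilon)^2\bigr)$. Hence $R(\epsilon)\le g(\epsilon)\sum_n c_n<\infty$ for $\epsilon$ small, validating the reduction, and $R(\epsilon)\to0$ because $\lambda(\epsilon)\mu(\epsilon)\epsilon^{1/2}\to0$ and $\mu(\epsilon)\to0$.

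The main obstacle is the careful, uniform-in-$\epsilon$ bookkeeping of these iterated integrals — in particular, taming the $(\partial_y p_u)^2$ factors against the near-delta kernel $C^\epsilon$ and controlling how such factors compound across chaos levels. Equivalently, via the two-particle representation (\ref{early2pt}), the point is to combine the convergence of the two-point motion (to a Brownian motion of diffusivity $2\nu$, as used for Proposition \ref{Weak convergence to the heat equation}) with a uniform bound on the exponential weight $\exp\bigl(\lambda(\epsilon)^2\int_0^tC^\epsilon(Y^1_s-Y^2_s)\,ds\bigr)$. This is exactly the estimate performed, for $\kappa\neq0$, in the proof of Theorem \ref{Thm:1}; the hypothesis $\mu(\epsilon)=o(\log(\epsilon^{-1})^{-1/2})$ — strictly stronger than the boundedness of $\mu(\epsilon)$ that suffices for the spatially-averaged Proposition \ref{Weak convergence to the heat equation} — is inherited from there and is more than enough for the present statement.
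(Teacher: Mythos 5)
Your reduction — that it suffices to show $\E\|\mathcal V(t)\|_{L^2(\R)}^2\to\|p_t\|_{L^2(\R)}^2=p_{2t}(0)$ — is the same identity the paper uses (there written $\E\|\mathcal V(t)-p_t\|_2^2=q^{\lambda(\epsilon)}(t,0)-p_{2t}(0)$), and your $n=1$ chaos estimate is correct, including the observation that the cross terms cancel after $y$-integration and the splitting of the $(\partial p)^2$ contribution at the scale $t-s\sim\epsilon^2$. But your main route (chaos expansion) is genuinely different from the paper's, and the step you describe as ``the main obstacle'' is in fact where the argument stands or falls, and you leave it unproved.

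Specifically, the bound $\int_\R\E[I_n(f_n^{t,y})^2]\,dy\le c_n\,g(\epsilon)^n$ with $\sum_n c_n<\infty$ is asserted as if it follows by ``applying the same near-diagonal bounds at each level.'' It does not. At level $n$ the $2^{2n}$ terms in the expansion of $\prod A\cdot\prod A$ produce, after integrating out the link variables, kernels $\partial^2 p_{2u_i}$ evaluated at \emph{differences of consecutive displacement variables}, each constrained to $O(\epsilon)$ by a $C^\epsilon$ but interacting across levels; and the time integral over the simplex with $n$ factors $u_i^{-3/2}\mathbf 1_{u_i>\epsilon^2}$ must be shown to produce geometric (not factorial) constants while retaining the $\epsilon$-smallness. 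Carried out carefully (e.g.\ bounding $u^{-3/2}\mathbf 1_{u>\epsilon^2}\le\epsilon^{-(1-\eta)}u^{\eta/2-1}$ and optimising $\eta\sim 1/\log(\epsilon^{-1})$, exactly as the paper does in Lemma~\ref{bound for the third line} and the Gronwall step), the all-derivative terms contribute on the order of $\bigl(\mu(\epsilon)^2\log(\epsilon^{-1})\bigr)^n$. Thus the hypothesis $\mu(\epsilon)=o(\log(\epsilon^{-1})^{-1/2})$ is not, as you say, ``more than enough'' and merely inherited: in your approach it is exactly the condition that makes the chaos sum converge to zero, and you would need to surface it in the level-$n$ estimate. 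As written, your proposal names the obstacle and then declares it overcome.

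The paper avoids all of this by never expanding in chaos. It works with the transition density $q$ of the one-dimensional diffusion $D=Y^1-Y^2$ (generator $a_\epsilon\,d^2/dx^2$), for which uniform-in-$\epsilon$ Aronson bounds hold (Proposition~\ref{estimate on q}); then $q^{\lambda(\epsilon)}$ satisfies the Duhamel identity (\ref{Duhamel principle for q}), giving a uniform $L^\infty$ bound by Gronwall (Proposition~\ref{Upper bound on q kappa}) and, via Stroock's estimate, \emph{equicontinuity} of $q$ and $q^{\lambda(\epsilon)}$ (Propositions~\ref{q is equicontinuous}, \ref{q lambda is equicontinuous}) — the latter is where $\mu(\epsilon)\to0$ enters, since it makes $a_\epsilon$ equicontinuous. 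Arzel\`a–Ascoli plus $\kappa_\epsilon\to0$ then gives $q^{\lambda(\epsilon)}(t,0)\to p_{2t}(0)$. Your closing ``equivalently'' paragraph gestures at this route but misstates it: convergence of the two-point motion together with a uniform bound on the exponential weight only gives the spatially-averaged statement of Proposition~\ref{Weak convergence to the heat equation}; what upgrades it to the pointwise statement $q^{\lambda(\epsilon)}(t,0)\to p_{2t}(0)$ is precisely the equicontinuity of the density, which you do not mention.
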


In the  strong disorder regime, corresponding to  the region above the line segments in Figure \ref{diagram}, the asymptotic behaviour of the kernels is expected to be  quite different. We give  a first result in this direction, showing convergence to zero;   however the argument is  only valid when  $\beta >\frac{1}{2}$, so that there is a region  in Figure \ref{diagram} on the right which is not covered. The proof is an adaptation of an argument from Liggett \cite{liggett2004interacting}, see also \cite{DPinREpathlocalizationandstrongdisorder}.
\begin{proposition}\label{Above the line}
    Suppose we are in the setting of the preamble to Theorem \ref{Thm:1}. Suppose further that $\mu(\epsilon)$ is bounded and $\lambda(\epsilon) \mu(\epsilon) \epsilon^{\frac{1}{2}}$ tends to infnity as $\epsilon\to 0$. Then, for each $t>0$, the following convergence holds in probability
    \begin{align*}
        \int_\R \mathcal{V}(t,y) dy \to 0, \text{ as } \epsilon\to 0.
    \end{align*}
\end{proposition}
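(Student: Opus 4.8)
The plan is to use that the total mass $M^{\epsilon}(t):=\int_{\mathbb R}\mathcal V(t,y)\,dy$ is a nonnegative mean-one martingale in $t$, and to quantify its collapse through a fractional-moment estimate in the spirit of Liggett. Integrating \eqref{SPDE} over $y\in\mathbb R$, the Laplacian term and the divergence term vanish by the spatial decay of $\mathcal V$ and $\partial_y\mathcal V$, leaving $dM^{\epsilon}(t)=\lambda(\epsilon)\int_{\mathbb R}\mathcal V(t,y)\,W^{\epsilon}(dt,y)\,dy=M^{\epsilon}(t)\,d\mathcal M_t$, where $\mathcal M_t:=\lambda(\epsilon)\int_0^t\!\int_{\mathbb R}N^{\epsilon}(s,y)\,W^{\epsilon}(ds,y)\,dy$ and $N^{\epsilon}(s,\cdot):=\mathcal V(s,\cdot)/M^{\epsilon}(s)$ is the normalised kernel. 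Hence $M^{\epsilon}(t)=\exp\!\bigl(\mathcal M_t-\tfrac12\langle\mathcal M\rangle_t\bigr)$ with $d\langle\mathcal M\rangle_t=\lambda(\epsilon)^2 q_\epsilon(t)\,dt$, where $q_\epsilon(t):=\int_{\mathbb R^2}N^{\epsilon}(t,y)N^{\epsilon}(t,y')\,C^{\epsilon}(y-y')\,dy\,dy'$ is the replica overlap of the normalised kernel. Since $\E[M^{\epsilon}(t)]=1$, it is enough to show $\E[\sqrt{M^{\epsilon}(t)}]\to0$: this implies $\sqrt{M^{\epsilon}(t)}\to0$ in $L^1(\p)$, hence $M^{\epsilon}(t)\to0$ in probability (and, as $\{\sqrt{M^{\epsilon}(t)}\}_\epsilon$ is bounded in $L^2(\p)$, the two statements are in fact equivalent).

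Next I would apply It\^o's formula to the smooth function $\sqrt{M^{\epsilon}(t)}=\exp\!\bigl(\tfrac12\mathcal M_t-\tfrac14\langle\mathcal M\rangle_t\bigr)$, which avoids the singularity of the square root at the origin; taking expectations and localising the resulting local martingale gives
\begin{align}\label{Above: fractional moment}
\E\!\bigl[\sqrt{M^{\epsilon}(t)}\bigr]=1-\frac{\lambda(\epsilon)^2}{8}\int_0^t\E\!\bigl[\sqrt{M^{\epsilon}(s)}\,q_\epsilon(s)\bigr]\,ds .
\end{align}
The goal is then to show the overlap $q_\epsilon(s)$ is large enough that $\phi_\epsilon(s):=\E[\sqrt{M^{\epsilon}(s)}]$ obeys a differential inequality driving $\phi_\epsilon(t)$ to $0$. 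For the lower bound on $q_\epsilon$ I would use that $C$ is positive definite: with $\widehat{C^{\epsilon}}(\xi)=\mu(\epsilon)^2\epsilon\,\hat C(\epsilon\xi)\ge0$ and $\rho_0>0$ chosen so that $\hat C(\eta)\ge\tfrac12\hat C(0)$ for $|\eta|\le\rho_0$, Plancherel yields
\[
q_\epsilon(s)\;\ge\;\frac{\mu(\epsilon)^2\epsilon\,\hat C(0)}{4\pi}\int_{|\xi|\le\rho_0/\epsilon}\bigl|\widehat{N^{\epsilon}}(s,\xi)\bigr|^2\,d\xi\;\gtrsim\;\frac{\mu(\epsilon)^2\,\epsilon}{\max\!\bigl(\epsilon,\,R_\epsilon(s)\bigr)},
\]
where $R_\epsilon(s):=\int_{\mathbb R}|y|\,N^{\epsilon}(s,y)\,dy$ is the mean displacement of the normalised kernel and we used $|\widehat{N^{\epsilon}}(s,\xi)|\ge1-|\xi|R_\epsilon(s)$. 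Consequently $\lambda(\epsilon)^2 q_\epsilon(s)\gtrsim(\lambda(\epsilon)\mu(\epsilon)\epsilon^{1/2})^2/\max(\epsilon,R_\epsilon(s))$, which diverges as long as $R_\epsilon(s)$ remains of order $1$ (or at least $o((\lambda(\epsilon)\mu(\epsilon)\epsilon^{1/2})^2)$) --- here one uses that $\nu$ is a fixed constant, so over a fixed time $s$ the kernel has not spread far. Inserting this into \eqref{Above: fractional moment} via a Gr\"onwall argument gives $\phi_\epsilon(t)\le\exp(-c\,(\lambda(\epsilon)\mu(\epsilon)\epsilon^{1/2})^2 t)$, or at worst a comparable power-type decay, which tends to $0$.

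The main obstacle is turning the pointwise bound on $q_\epsilon(s)$ into the bound $\E[\sqrt{M^{\epsilon}(s)}\,q_\epsilon(s)]\gtrsim g_\epsilon(s)\,\phi_\epsilon(s)$ actually needed in \eqref{Above: fractional moment}: since $R_\epsilon(s)=M^{\epsilon}(s)^{-1}\int|y|\mathcal V(s,y)\,dy$ contains the factor $1/M^{\epsilon}(s)$, which can be large, one must control the spread of $\mathcal V(s,\cdot)$ uniformly --- for instance by Cauchy--Schwarz against a negative moment of $M^{\epsilon}(s)$, or by an $\epsilon$-dependent truncation of $R_\epsilon(s)$ together with a crude a priori bound on $\int(1+|y|)\mathcal V(s,y)\,dy$. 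Either route loses a polynomial factor in $\epsilon$, which can only be absorbed when $\lambda(\epsilon)\mu(\epsilon)\epsilon^{1/2}$ diverges fast enough; this is exactly the restriction $\beta>\tfrac12$ flagged before the statement, and is the reason the argument does not reach the entire strong-disorder region in Figure \ref{diagram}. The remaining ingredients --- vanishing of the boundary terms, the Dol\'eans--Dade representation, the localisation in \eqref{Above: fractional moment}, and the elementary Fourier estimate on $q_\epsilon$ --- are routine given the SPDE theory recalled in the introduction.
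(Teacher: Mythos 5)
Your setup is right and it is the same starting point as the paper's: $M^\epsilon(t)=\int_\R\mathcal{V}(t,y)\,dy$ is a mean-one martingale, one applies It\^o to $\sqrt{M^\epsilon(t)}$, and one aims for a Gr\"onwall-type differential inequality for $\phi_\epsilon(t)=\E[\sqrt{M^\epsilon(t)}]$ in the Liggett style. (The $-\lambda^2/8$ in your identity is the correct constant; the paper's $-\lambda^2/4$ looks like a slip.) The place where your argument and the paper's part ways --- and where you have a genuine gap --- is the lower bound on the drift term $\E\bigl[M^\epsilon(t)^{-3/2}\int\mathcal{V}(t,x)\mathcal{V}(t,y)C^\epsilon(x-y)\,dx\,dy\bigr]$. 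You pass to the normalised density $N^\epsilon=\mathcal{V}/M^\epsilon$ and lower-bound the normalised overlap $q_\epsilon$ by Fourier, which reduces everything to controlling the mean displacement $R_\epsilon(s)=\int|y|N^\epsilon(s,y)\,dy$. But $R_\epsilon(s)$ carries a factor $1/M^\epsilon(s)$, so you run directly into negative moments of $M^\epsilon$. Neither of your two proposed patches is carried through, and the first is particularly problematic: in the strong-disorder regime you are trying to reach, $M^\epsilon$ is precisely the quantity collapsing to zero, so $\E[M^\epsilon(s)^{-p}]$ is exactly what one should expect to be uncontrollable (infinite, or diverging in $\epsilon$). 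The obstacle you flag is not a technicality to be traded for a power of $\epsilon$; it is the crux.

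The paper avoids normalisation altogether. It keeps the overlap in un-normalised form, writes $\int\mathcal{V}\mathcal{V}C^\epsilon=\|\mathcal{V}(t)*\rho_\epsilon\|_2^2\geq\frac{1}{2a}\bigl(\int_{-a}^a(\mathcal{V}(t)*\rho_\epsilon)(z)\,dz\bigr)^2$ by Cauchy--Schwarz on a finite window, and splits $\int_{-a}^a=\kappa_\epsilon v_t-(\text{tail beyond }[-a,a])$. The key trick is then the elementary inequality $(x-y)^2\geq x^2-2y^{1/2}x^{3/2}$ for $0\leq y\leq x$, applied with $x=\kappa_\epsilon v_t$: the $v_t^{-3/2}$ prefactor from It\^o cancels \emph{exactly}, yielding a main term $\tfrac{\kappa_\epsilon^2}{2a}v_t^{1/2}$ (ready for Gr\"onwall) and an error term $\tfrac{\kappa_\epsilon^{3/2}}{a}(\text{tail})^{1/2}$ with \emph{no} dependence on $v_t$ at all. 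After Jensen, the error's expectation involves only $\E[\mathcal{V}(t,\cdot)]=p_t$ --- a fixed Gaussian --- and equals $\p(|\sqrt{s}Z+\epsilon W|>a)$, which is made uniformly small by taking $a$ large; no information about the shape of the random density $N^\epsilon$ is needed, and no negative moments of $M^\epsilon$ ever appear. If you want to salvage a Fourier version, the lesson is the same: do not divide by $M^\epsilon$; instead look for a decomposition of the un-normalised overlap whose error term carries a power of $v_t$ that exactly absorbs the $v_t^{-3/2}$ from It\^o, so that only $\E[\mathcal{V}]$ is needed to close the estimate.
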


\section{The 2-Point Motion}
\subsection{The general case}\label{2pt The general case}
Before we move on to the main part of the proof of Theorem \ref*{Thm:1}, we will show that the second moments of $\mathcal{V}$ can be written in terms of a two-dimensional diffusion, which we will call the two-point motion associated to $\mathcal{V}$.
\begin{proposition}\label{2pt motion}
    Suppose that $\mathcal{V}^{\epsilon, \lambda}$ is the solution to (\ref{SPDE for v}). Let $Y = (Y^1,Y^2)$ be the diffusion in $\R^2$ with generator
    \begin{align*}
        \mathcal{G}f= \frac{1}{2}\sum_{i,j=1}^2 (\sigma^2 \delta_{i,j} + C^\epsilon(y_i-y_j))\scndmxPartial{f}{y_i}{y_j} + \sum_{i=1}^2 \lambda C^\epsilon(y_1-y_2)\Partial{f}{y_i},
    \end{align*}
    where $\delta_{i,j}$ is the Kronecker delta. Then we have the equality
    \begin{align}
        &\E\left[\int_{\R^2} \mathcal{V}^{\epsilon, \lambda}(t,y_1)\mathcal{V}^{\epsilon, \lambda}(t,y_2)f(y_1,y_2)dy \right] = \E\left[ e^{\lambda^2 \int_0^t C^\epsilon(Y^1(s)-Y^2(s))ds} f(Y^1(t),Y^2(t)) \right].\label{generator of Y}
    \end{align}
\end{proposition}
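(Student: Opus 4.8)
The plan is to reduce \eqref{generator of Y} to the known second-moment formula for the flow of kernels $\mathcal{U}^\epsilon$ and then to produce both the Feynman--Kac weight and the extra drift by a Girsanov change of measure. Write $\mathcal{G}_0:=\mathcal{G}|_{\lambda=0}$ for the driftless part of the generator appearing in the statement (so $\nu:=\sigma^2+C^\epsilon(0)$ is the common diagonal entry of its diffusion matrix, matching the Laplacian coefficient in \eqref{SPDE for v}), and let $\bar X=(\bar X^1,\bar X^2)$ denote the two-point motion of $\mathcal{U}^\epsilon$, i.e.\ the diffusion generated by $\mathcal{G}_0$ started at the origin. As a standard consequence of the conditional independence, given $W_C$, of two particles driven by independent molecular noises in the common field, each with quenched density $\mathcal{U}^\epsilon(0,t,0,\cdot)$, one has
\begin{align*}
\E\!\left[\int_{\R^2}\mathcal{U}^\epsilon(0,t,0,z_1)\,\mathcal{U}^\epsilon(0,t,0,z_2)\,g(z_1,z_2)\,\mathrm{d}z\right]=\E\!\left[g(\bar X^1_t,\bar X^2_t)\right]
\end{align*}
for bounded $g$; I would recall or re-derive this first, noting that $\bar X$ is a continuous martingale (since $\mathcal{G}_0$ has no drift) with $\mathrm{d}\langle \bar X^i,\bar X^j\rangle_t=\bigl(\sigma^2\delta_{ij}+C^\epsilon(\bar X^1_t-\bar X^2_t)\bigr)\,\mathrm{d}t$.

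First I would invoke the identification of the solution $\mathcal{V}^{\epsilon,\lambda}$ of \eqref{SPDE for v} with the tilted kernel \eqref{Def: Tilted kernel} from Lemma~\ref{Lemma: SPDE for v}, i.e.\ $\mathcal{V}^{\epsilon,\lambda}(t,y)=e^{\frac{\nu}{2}\lambda^2 t+\lambda y}\,\mathcal{U}^\epsilon(0,t,0,y+\lambda\nu t)$ in law. Substituting this into the left-hand side of \eqref{generator of Y}, changing variables $z_i=y_i+\lambda\nu t$, and collecting the exponential prefactors turns it into $e^{-\nu\lambda^2 t}\,\E\!\left[e^{\lambda(\bar X^1_t+\bar X^2_t)}\,f(\bar X^1_t-\lambda\nu t,\bar X^2_t-\lambda\nu t)\right]$, by the displayed $\mathcal{U}^\epsilon$-formula applied with $g(z_1,z_2)=e^{\lambda(z_1+z_2)}f(z_1-\lambda\nu t,z_2-\lambda\nu t)$. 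Next introduce the exponential martingale $M_t=\exp\!\bigl(\lambda(\bar X^1_t+\bar X^2_t)-\tfrac{\lambda^2}{2}\langle \bar X^1+\bar X^2\rangle_t\bigr)$; since $\langle \bar X^1+\bar X^2\rangle_t=2\nu t+2\int_0^t C^\epsilon(\bar X^1_s-\bar X^2_s)\,\mathrm{d}s$, we have $e^{\lambda(\bar X^1_t+\bar X^2_t)}=M_t\,e^{\nu\lambda^2 t}\exp\!\bigl(\lambda^2\!\int_0^t C^\epsilon(\bar X^1_s-\bar X^2_s)\,\mathrm{d}s\bigr)$, and because $C\in C^\infty_c(\R)$ the bracket is bounded, so Novikov's criterion makes $M$ a genuine martingale with $\E[M_t]=1$. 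Defining $\widetilde{\p}$ by $\mathrm{d}\widetilde{\p}=M_t\,\mathrm{d}\p$ on $\mathcal{F}_t$, Girsanov's theorem adds to $\bar X^i$ the drift $\lambda\,\mathrm{d}\langle \bar X^i,\bar X^1+\bar X^2\rangle_s=\lambda\bigl(\nu+C^\epsilon(\bar X^1_s-\bar X^2_s)\bigr)\,\mathrm{d}s$ and leaves the brackets unchanged. Setting $Y^i_t:=\bar X^i_t-\lambda\nu t$ cancels the $\lambda\nu$ in this drift and leaves the difference $\bar X^1-\bar X^2=Y^1-Y^2$ (on which all the diffusion coefficients depend) unchanged; hence under $\widetilde{\p}$ the pair $Y=(Y^1,Y^2)$ solves the martingale problem for $\mathcal{G}$ started at the origin, and $\int_0^t C^\epsilon(\bar X^1_s-\bar X^2_s)\,\mathrm{d}s=\int_0^t C^\epsilon(Y^1_s-Y^2_s)\,\mathrm{d}s$. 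Reassembling the factors gives $e^{-\nu\lambda^2 t}\,\E[e^{\lambda(\bar X^1_t+\bar X^2_t)}f(\bar X^1_t-\lambda\nu t,\bar X^2_t-\lambda\nu t)]=\widetilde{\E}\bigl[e^{\lambda^2\int_0^t C^\epsilon(Y^1_s-Y^2_s)\,\mathrm{d}s}f(Y^1_t,Y^2_t)\bigr]$, which is the right-hand side of \eqref{generator of Y}.

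The only genuine technical point --- and hence the step I would expect to need the most care --- is that the $\mathcal{U}^\epsilon$ second-moment formula is naturally stated for bounded test functions, whereas the $g$ above grows exponentially. I would handle this by truncation, using $\E[\mathcal{U}^\epsilon(0,t,0,\cdot)]=p_t$ together with $\langle \bar X^1+\bar X^2\rangle_t\le (2\nu+2\|C\|_\infty)t$, so that $(\bar X^1,\bar X^2)$ has Gaussian tails uniform in the environment and $\E[e^{\lambda(\bar X^1_t+\bar X^2_t)}]<\infty$; this makes the limit passage and the Fubini interchanges (between $\E[\,\cdot\mid W_C]$, the $W_C$-average, and the $\mathrm{d}y$ integral, all with integrands of one sign) routine. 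Well-posedness of the martingale problem for $\mathcal{G}$, which has smooth bounded coefficients, makes ``the $\mathcal{G}$-diffusion'' unambiguous. I do not anticipate a deeper obstacle. If one preferred to avoid the detour through $\mathcal{U}$, one could instead compute $\E[\langle \mathcal{V}(t)\otimes\mathcal{V}(t),f\rangle]$ directly from the Wiener chaos expansion of \eqref{SPDE for v} (or by It\^o's formula for the distribution-valued product $\mathcal{V}(t,y_1)\mathcal{V}(t,y_2)$); there the analogous obstacle is justifying the infinite-dimensional It\^o step, after which the moment identity resums to the same Feynman--Kac formula.
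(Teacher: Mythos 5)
Your proof is correct and follows essentially the same route as the paper's: reduce to the $\mathcal{U}^\epsilon$ two-point motion via the tilt \eqref{Def: Tilted kernel}, then apply Girsanov with the Dol\'eans--Dade exponential of $\lambda(\bar X^1+\bar X^2)$ and shift by $-\lambda\nu t$ to obtain $Y$ with generator $\mathcal{G}$. You supply more explicit detail than the paper (the bracket computation producing the Feynman--Kac weight, Novikov, the integrability of the unbounded test function), but the argument is the same.
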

\begin{proof}
    As a consequence of (\ref{Def: Tilted kernel}) and the definition of the underlying flow of kernels, (\ref{Def: Flow of kernels for turbulent SDE}), we have
    \begin{align*}
        &\E\left[\int_{\R^2} \mathcal{V}^{\epsilon, \lambda}(t,y_1)\mathcal{V}^{\epsilon, \lambda}(t,y_2)f(y_1,y_2)dy \right]\\
        =& \E\left[ \left(\int_\R e^{-\frac{\nu}{2} \lambda^2(t-s) +\lambda y} \mathcal{U}^\epsilon (0,t, 0, y) f(y-\lambda \nu t) dy \right)^2 \right]\\
        =& \E\left[ e^{-\frac{\nu}{2} \lambda^2(t-s) +\lambda (X^1(t) + X^2(t))} f(X^1(t)-\lambda \nu t) f(X^2(t) - \lambda \nu t) \right],
    \end{align*}
    where the process $(X(t))_{t\geq 0}$ has the generator given in line (\ref{X generator}). The stated equality follows by applying Girsanov's theorem with change of measure given by the Dol\'eans-Dade exponential of the process $(X^1+X^2)$, and then defining the process $(Y(t))_{t>0}$ as $Y^i(t)= X^i(t)- \lambda \nu t$, for $i=1,2$.
\end{proof}
Note that both $Y^1$ and $Y^2$ are distributed as Brownian motions on $\R$ with diffusivity $\nu= \sigma^2+ C^{\epsilon}(0)$ plus an additional drift, which is the same for both processes. In fact, as $\epsilon\to 0$ the process $Y$ converges in distribution to two-dimensional Brownian motion, which we will now prove. We will then use this result to show that $\mathcal{V}$ converges in a weak sense towards the heat kernel.
\begin{proposition}\label{convergence of the 2pt motion}
    Suppose that $Y$ is the diffusion with the generator given in Line (\ref{generator of Y}) for parameters depending on $\epsilon$, such that $\lambda(\epsilon) \mu(\epsilon) \epsilon^{\frac{1}{2}}\sigma(\epsilon)^{-\frac{1}{2}}$ is bounded in $\epsilon$ (i.e. we are on or below the line in Figure \ref{diagram}) and $\sigma(\epsilon)^2+ C^\epsilon(0)=\nu$ for all $\epsilon>0$. Then, $Y$ converges in distribution to a Brownian motion with diffusivity $\nu$ on $\R^2$ as $\epsilon \to 0$.
\end{proposition}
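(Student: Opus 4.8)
The plan is to realise $Y^1$ and $Y^2$ as two Brownian motions of diffusivity $\nu$ whose interaction, confined to the shrinking region $\{|y_1-y_2| = O(\epsilon)\}$, disappears in the limit. Write $Y^i = M^i + A_\epsilon$, where $M^i$ is the martingale part of $Y^i$ and $A_\epsilon(t) = \lambda(\epsilon)\int_0^t C^\epsilon(D(s))\,ds$ (with $D := Y^1 - Y^2$) is the drift, which is the same for both coordinates since the generator has drift coefficient $\lambda(\epsilon)C^\epsilon(y_1-y_2)$ in front of both $\partial_{y_1}$ and $\partial_{y_2}$. From the generator $\langle M^i\rangle_t = (\sigma(\epsilon)^2 + C^\epsilon(0))\,t = \nu t$ \emph{exactly}, so by L\'evy's characterisation each $M^i$ is a Brownian motion of diffusivity $\nu$ — its law in particular not depending on $\epsilon$ — and the family $\{Y^\epsilon\}$ is automatically tight; moreover $\langle M^1, M^2\rangle_t = \int_0^t C^\epsilon(D(s))\,ds$, and because the drifts cancel in $D$, the latter is a driftless diffusion with $\langle D\rangle_t = \int_0^t 2\bigl(\nu - C^\epsilon(D(s))\bigr)\,ds$. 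It therefore suffices to prove that, for each fixed $t>0$, both $\E\bigl[\int_0^t |C^\epsilon(D(s))|\,ds\bigr]$ and $\lambda(\epsilon)\E\bigl[\int_0^t |C^\epsilon(D(s))|\,ds\bigr]$ tend to $0$: the first makes the bracket matrix of $(M^1,M^2)$ converge, uniformly on compact time sets and in probability, to $\nu I\,t$, so that a standard martingale functional central limit theorem identifies the limit of $(M^1,M^2)$ as a pair of independent $\nu$-Brownian motions; the second makes $\sup_{s\le t}|A_\epsilon(s)|\to 0$ in $L^1(\p)$, whence $Y = (M^1,M^2)+(A_\epsilon,A_\epsilon)$ has the same limit.

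The core step is to estimate $\E\bigl[\int_0^t |C^\epsilon(D(s))|\,ds\bigr]$ using the occupation-times formula and a Tanaka bound for the local time of $D$. If $\Lambda^x$ denotes the semimartingale local time of $D$, then Tanaka's formula together with $D(0)=0$ gives $\E[\Lambda_t^x] \le \E|D(t)| \le \bigl(\E\langle D\rangle_t\bigr)^{1/2} \le 2\sqrt{\nu t}$ uniformly in $x$, while the occupation-times formula with the expression for $\langle D\rangle$ above gives $\int_0^t |C^\epsilon(D(s))|\,ds = \int_\R \frac{|C^\epsilon(x)|}{2(\nu - C^\epsilon(x))}\,\Lambda_t^x\,dx$; hence
\[
  \E\!\left[\int_0^t |C^\epsilon(D(s))|\,ds\right] \;\le\; \sqrt{\nu t}\,\int_\R \frac{|C^\epsilon(x)|}{\nu - C^\epsilon(x)}\,dx .
\]
The delicate feature is that $\nu - C^\epsilon(x)$ dips down to $\sigma(\epsilon)^2$ at $x = 0$, so one must not bound the integrand crudely by $|C^\epsilon(x)|/\sigma(\epsilon)^2$. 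Substituting $x = \epsilon y$ and using $\nu = \sigma(\epsilon)^2 + \mu(\epsilon)^2 C(0)$ rewrites the integral as $\epsilon\mu(\epsilon)^2\int_\R \frac{|C(y)|}{\sigma(\epsilon)^2 + \mu(\epsilon)^2(C(0)-C(y))}\,dy$; since $C$ has a non-degenerate maximum at the origin ($C(0) - C(y) \asymp y^2$ near $0$), the $y$-integral is of Lorentzian type, with effective width of order $\sigma(\epsilon)/\mu(\epsilon)$, hence $O\bigl(\mu(\epsilon)^{-1}\sigma(\epsilon)^{-1}\bigr)$. This gives $\E\bigl[\int_0^t |C^\epsilon(D(s))|\,ds\bigr] \le c(t,\nu,C)\,\mu(\epsilon)\,\epsilon/\sigma(\epsilon)$.

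Matching this against the hypotheses finishes the proof: writing
\[
  \lambda(\epsilon)\,\frac{\mu(\epsilon)\,\epsilon}{\sigma(\epsilon)} \;=\; \frac{\bigl(\lambda(\epsilon)\mu(\epsilon)\epsilon^{1/2}\sigma(\epsilon)^{-1/2}\bigr)^2}{\lambda(\epsilon)\mu(\epsilon)} ,
\]
the numerator is bounded by the standing hypothesis (equivalently, by lying on or below the line in Figure~\ref{diagram}), while the denominator tends to $\infty$ when $\mu(\epsilon)$ is bounded below, since $\lambda(\epsilon)\to\infty$; in the weak-environment sub-case $\mu(\epsilon)\to 0$ one instead uses that $\sigma(\epsilon)$ is bounded below and $\lambda(\epsilon)\mu(\epsilon)\epsilon^{1/2}$ is bounded. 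Either way $\lambda(\epsilon)\mu(\epsilon)\epsilon/\sigma(\epsilon)\to 0$, so both quantities in the first paragraph vanish and the conclusion follows. The occupation-time bound is the only place where anything subtle happens: it is exactly the time that $D$ spends near the diagonal, where its diffusivity collapses to $2\sigma(\epsilon)^2$, that creates the sticky behaviour appearing above the line, and keeping track of the Lorentzian width $\sigma(\epsilon)/\mu(\epsilon)$ is what makes the estimate close below it. (For the weaker statement that $D$ alone converges to a $2\nu$-Brownian motion — enough for the weak spatial convergence of $\mathcal{V}$ but not for the cancellation of the drift — only $\mu(\epsilon)\epsilon/\sigma(\epsilon)\to 0$ is needed, and the same computation shows that this holds throughout the region under consideration.)
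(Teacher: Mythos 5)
Your proof is correct, but it takes a different route from the paper's in its second half. The paper establishes convergence via the generator-convergence theorem of Ethier and Kurtz (Theorem 8.2), for which it estimates all moments $\E\bigl[(\int_0^t C^\epsilon(D(s))\,ds)^k\bigr]$, using the occupation times formula, Jensen's inequality, and uniform $L^k$ bounds on the local time $\mathcal{L}^y_t(D)$; tightness is asserted from the marginal laws. You instead decompose $Y^i = M^i + A_\epsilon$ with $A_\epsilon(t)=\lambda(\epsilon)\int_0^t C^\epsilon(D(s))\,ds$, observe that $\langle M^i\rangle_t = \nu t$ exactly so each $M^i$ is, by L\'evy's characterisation, an honest $\nu$-Brownian motion for \emph{every} $\epsilon$ (which also gives tightness for free), and then invoke a martingale functional CLT for the pair $(M^1,M^2)$ once $\langle M^1,M^2\rangle_t=\int_0^t C^\epsilon(D)\,ds\to 0$. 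For the quantitative step you replace the paper's local-time moment bounds by a clean Tanaka argument: $D$ is a local martingale, so $\E[\Lambda^x_t]\le \E|D(t)-x|-|x|\le\E|D(t)|\le(\E\langle D\rangle_t)^{1/2}\le 2\sqrt{\nu t}$, uniformly in $x$. Combined with the occupation-times formula this needs only first moments, which is all that the conclusion actually uses. Your explicit Lorentzian computation of $\int_\R |C^\epsilon|/(\nu-C^\epsilon)$ gives $O(\epsilon\mu(\epsilon)/\sigma(\epsilon))$; the paper states the sharper $O(\epsilon\mu(\epsilon)^2/\sigma(\epsilon))$ (tight when $\mu\to 0$), but both are compatible with the standing hypotheses and both yield the needed vanishing of $\lambda(\epsilon)\E\bigl[\int_0^t |C^\epsilon(D)|\,ds\bigr]$. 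One small caveat, shared by the paper: the final case analysis quietly uses $\lambda(\epsilon)\to\infty$ (i.e.\ $\beta>0$), which is a standing convention of the paper rather than an explicit hypothesis of the proposition. Overall your version is more self-contained and arguably cleaner, avoiding the semigroup machinery and needing only first-moment control.
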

\begin{proof}
    We apply \cite[Theorem 8.2]{ethier2009markov}; this proves the convergence of finite dimensional distributions by considering the behaviour of the generator. We will use the following estimate, which we derive below. There is a constant $C'>0$ depending only on $t>0$ and $\nu$ such that
    \begin{align}\label{occupation times estimate}
        \E\left[ \int_0^t C^\epsilon(Y^1(s)-Y^2(s))ds^k \right]^{1/k} \leq C'' \lambda(\epsilon)^{-2}.
    \end{align}
    For any $k\in \N$, the occupation times formula gives
    \begin{align*}
        \E\left[ \int_0^t C^\epsilon(Y^1(s)-Y^2(s))ds^k \right] = \E\left[ \left(\int_\R \frac{C^\epsilon(y) }{\sigma(\epsilon)^2 + C^\epsilon(0)- C^\epsilon(y)}\mathcal{L}^y_t(Y^1-Y^2)dy\right)^k\right],
    \end{align*}
    where $\mathcal{L}$ is the local time. If we apply Jensen's inequality to the right hand side, we get 
    \begin{align*}
        \E\left[ \int_0^t C^\epsilon(Y^1(s)-Y^2(s))ds^k \right] \leq m_\epsilon^{k-1} \int_\R \frac{C^\epsilon(y) }{\sigma(\epsilon)^2 + C^\epsilon(0)- C^\epsilon(y)} \E\left[ \mathcal{L}^y_t(Y^1-Y^2)^k \right]dy,
    \end{align*}
    where $m_\epsilon = \int_\R \frac{C^\epsilon(y) }{\sigma(\epsilon)^2 + C^\epsilon(0)- C^\epsilon(y)} dy$.
    One can easily show that for each $t>0$, there is a constant $C>0$, depending only on $\nu$, such that, for any $y\in \R$, $\E[\mathcal{L}^y_t(Y^1-Y^2)^k ]^{1/k} \leq C$. Thus, we have that for all $\epsilon>0$
    \begin{align*}
        \E\left[ \int_0^t C^\epsilon(Y^1(s)-Y^2(s))ds^k \right]^{\frac{1}{k}} &\leq Cm_\epsilon\\
                                                                            &\leq C' \frac{\epsilon \mu(\epsilon)^2}{\sigma(\epsilon)}.
    \end{align*}
    In the above inequality, $C'>0$ is a new constant that depends only on $t$ and $\nu$; the second inequality is a consequence of the assumption that $C^\epsilon=\rho_\epsilon*\rho_\epsilon$, where $\rho_\epsilon= \mu(\epsilon)^2 \epsilon^{-\frac{1}{2}}\rho(\epsilon^{-1}\cdot)$ for $\rho\in C^\infty_c(\R)$. In particular, this means that the main contribution to the integral comes from $y$ small enough that $C^\epsilon(0)- C^\epsilon(y)$ looks quadratic. Our assumption that $\lambda(\epsilon) \mu(\epsilon) \epsilon^{\frac{1}{2}}\sigma(\epsilon)^{-\frac{1}{2}}$ implies (\ref{occupation times estimate}).
    
    Estimate (\ref{occupation times estimate}) allows us to apply \cite[Theorem 8.2]{ethier2009markov} by controlling the difference between the action of the generator (\ref{generator of Y}) and the Laplacian. The Theorem gives us convergence of the finite dimensional distributions. The proof is concluded by noting that since, for any $\epsilon>0$, $Y^1$ and $Y^2$ are both marginally Brownian motions with diffusivity $\nu$ the sequence of processes $(Y)_{\epsilon>0}$ is tight.
\end{proof}
Proposition \ref{Weak convergence to the heat equation}, i.e. $\mathcal{V}(t)$ converges to the heat kernel in a weak sense, is an immediate consequence of the above proposition.
\begin{proof}[Proof of Proposition \ref{Weak convergence to the heat equation}]
    First note that Proposition \ref{2pt motion} implies
    \begin{align*}
        &\E\left[ \int_\R\left( \mathcal{V}(t,y) - p_t(y)\right) f(y)dy ^2 \right] \\
        =& \E\left[ e^{\lambda(\epsilon)^2 \int_0^t C^\epsilon(Y^1(s)-Y^2(s))ds} f(Y^1(t))f(Y^2(t)) \right] - \int_\R p_t(y)f(y) dy^2.
    \end{align*}
    Note that (\ref{occupation times estimate}) implies that the exponential term is converging to $1$ in $L^p(\p)$, for any $p\in [1, \infty)$. Hence, Proposition \ref{convergence of the 2pt motion} shows the above expression vanishes as $\epsilon\to 0$, and thus proves the statement.
\end{proof}
Moving the discussion to the weak diffusivity regime, we will explain the predictions we made in the introduction for the coefficients of the limiting stochastic heat equation. Recall that in the weak diffusivity regime, we assume that $\sigma(\epsilon)$ tends to $0$ more slowly than $\epsilon$, as $\epsilon$ tends to $0$. We also assume that $\frac{\lambda(\epsilon)^2 \epsilon}{\sigma(\epsilon)}$ converges to a positive constant $c$. The occupations times formula gives
\begin{multline}\label{occtimes}
    \lambda(\epsilon)^2 \int_0^t C^\epsilon(Y^1(s)-Y^2(s))ds =  \\
    \frac{\lambda(\epsilon)^2 \epsilon}{2\sigma(\epsilon)} \int_\R \frac{C(\sigma(\epsilon) y)}{1+ \underbrace{\sigma(\epsilon)^{-2}(C(0)-C(\sigma(\epsilon) y))}_{=-\frac{1}{2}C''(\xi_\epsilon(y)) y^2}} \mathcal{L}^{\sigma(\epsilon) \epsilon y}_t(Y^1-Y^2) dy,
\end{multline}
where the equality under the brace is due to Taylor's expansion and $|\xi_\epsilon(y)| \leq \sigma(\epsilon) |y|$. Proposition \ref{convergence of the 2pt motion} shows that the process $(Y^1, Y^2)$ converges in distribution to a Brownian motion $(B^1, B^2)$; if we assume that the local times, $\mathcal{L}^y_t(Y^1-Y^2)$, are converging jointly with $(Y^1,Y^2)$ to $\mathcal{L}^y_t(B^1-B^2)$, then (\ref{occtimes}) suggests the following convergence should hold as $\epsilon \to 0$
\begin{align*}
    &\E\left[ e^{\lambda(\epsilon)^2 \int_0^t C^\epsilon(Y^1(s)-Y^2(s))ds} f(Y^1(t))f(Y^2(t)) \right]\\
    \rightarrow \quad &\E[\exp\left( c \sqrt{2 } |C''(0)|^{-\frac{1}{2}} C(0)  \mathcal{L}^0_t(B^1-B^2) \right)f(B^1(t))f(B^2(t)) ].
\end{align*}
This is  the form of the second moment for the stochastic heat equation,  with the coefficients as predicted in the Introduction. This convergence can be made rigorous, see the thesis \cite{BMsInREs} for details.
\subsection{In a Weak Environment}
In the weak environment setting, we can make stronger statements about the limiting behaviour of the $\mathcal{V}$ by looking at the behaviour of the density of the law of the two-point motion. In fact, we will only need to look at the difference between the two-point motions, which is a one-dimensional diffusion. One explanation for the better behaviour in this setting is that the diffusion coefficient in the generator of this difference is converging to a constant function pointwise, rather than only pointwise almost everywhere, as happens in the other two regimes. As a consequence of Proposition \ref{2pt motion}, we can make the following statement about the process $D:=Y^1-Y^2$.
\begin{proposition}
    The process $D= Y^1-Y^2$ is a diffusion with generator
    \begin{align}\label{generator of D}
        (\sigma^2 + (C^\epsilon(0)-C^\epsilon(\cdot)))\frac{d^2}{dx^2}.
    \end{align}
\end{proposition}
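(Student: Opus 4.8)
The plan is to exploit the fact that $\mathcal{G}$ maps functions on $\R^2$ that depend only on the difference $y_1-y_2$ to functions of the same type, and then to transfer the martingale problem solved by $Y$ to the process $D=\pi(Y)$, where $\pi(y_1,y_2)=y_1-y_2$. First I would take $g\in C^2_b(\R)$, set $f=g\circ\pi$, and compute the partials: $\partial_{y_1}f=g'\circ\pi$, $\partial_{y_2}f=-g'\circ\pi$, $\partial^2_{y_1}f=\partial^2_{y_2}f=g''\circ\pi$ and $\partial^2_{y_1y_2}f=-g''\circ\pi$. Plugging these into $\mathcal{G}$, and using the symmetry $C^\epsilon(y_1-y_2)=C^\epsilon(y_2-y_1)$ together with $C^\epsilon(0)=C^\epsilon(y_i-y_i)$, one gets
\[
\mathcal{G}f=\tfrac12\bigl(2(\sigma^2+C^\epsilon(0))-2\,C^\epsilon\circ\pi\bigr)\,g''\circ\pi+\lambda\,(C^\epsilon\circ\pi)\bigl(g'\circ\pi-g'\circ\pi\bigr)=(\mathcal{A}g)\circ\pi,
\]
where $\mathcal{A}=(\sigma^2+C^\epsilon(0)-C^\epsilon(\cdot))\tfrac{d^2}{dx^2}$. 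Note in particular that the first-order (drift) term cancels, precisely because both coordinates of $Y$ carry the identical drift $\lambda C^\epsilon(y_1-y_2)$.

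Next I would invoke Proposition \ref{2pt motion}: $Y$ is the diffusion in $\R^2$ with generator $\mathcal{G}$, and since its diffusion matrix $\sigma^2 I+(C^\epsilon(y_i-y_j))_{i,j}$ is smooth, bounded and bounded below by $\sigma^2 I$ (positive-definiteness of $C^\epsilon$), with smooth bounded drift, $Y$ is the unique, non-exploding solution of the corresponding martingale problem. Consequently, for $g\in C^2_b(\R)$,
\[
g(D_t)-g(D_0)-\int_0^t(\mathcal{A}g)(D_s)\,ds=f(Y_t)-f(Y_0)-\int_0^t(\mathcal{G}f)(Y_s)\,ds
\]
is a martingale, so $D$ solves the martingale problem for $(\mathcal{A},C^2_b(\R))$. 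Since $x\mapsto\sigma^2+C^\epsilon(0)-C^\epsilon(x)$ is smooth, bounded and $\geq\sigma^2>0$, the SDE $dD_t=\sqrt{2(\sigma^2+C^\epsilon(0)-C^\epsilon(D_t))}\,d\beta_t$ has a unique strong solution and the $\mathcal{A}$-martingale problem is well posed; hence $D$ is exactly the (strong Markov) diffusion with generator $\mathcal{A}=(\sigma^2+C^\epsilon(0)-C^\epsilon(\cdot))\tfrac{d^2}{dx^2}$. As an alternative I could argue directly at the level of SDEs: writing $dY^i_t=\sigma\,dB^i_t+dN^i_t+\lambda C^\epsilon(D_t)\,dt$ with $(N^1,N^2)$ a continuous martingale satisfying $d\langle N^i,N^j\rangle_t=C^\epsilon(Y^i_t-Y^j_t)\,dt$, subtracting gives $d\langle D\rangle_t=2(\sigma^2+C^\epsilon(0)-C^\epsilon(D_t))\,dt$ with vanishing drift, and a martingale-representation argument then identifies the law of $D$.

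The computation itself is elementary, so I do not anticipate a genuine obstacle; the only points that deserve a sentence of care are that $f=g\circ\pi$ is not compactly supported on $\R^2$ (so one either works with the martingale-problem formulation on $C^2_b$, which is legitimate here because $Y$ does not explode, or runs a routine localisation with cut-offs $g_R$ agreeing with $g$ on $[-R,R]$ and lets $R\to\infty$), and the step that upgrades ``$D$ solves the $\mathcal{A}$-martingale problem'' to ``$D$ is the diffusion generated by $\mathcal{A}$'', which rests on the well-posedness noted above. Neither is a real difficulty.
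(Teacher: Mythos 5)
Your argument is correct. The paper gives no proof for this proposition at all, merely asserting it as an immediate consequence of Proposition \ref{2pt motion}; your computation supplies exactly the details one would fill in. The key observations you make are indeed the content: with $f=g\circ\pi$, the cross-terms $\partial^2_{y_1 y_2}f$ carry a $-g''\circ\pi$, so the second-order part of $\mathcal{G}$ collapses to
\[
\tfrac12\bigl(2(\sigma^2+C^\epsilon(0))-2C^\epsilon\circ\pi\bigr)g''\circ\pi
   =\bigl(\sigma^2+C^\epsilon(0)-C^\epsilon\circ\pi\bigr)g''\circ\pi,
\]
while the two drift contributions $\lambda C^\epsilon(y_1-y_2)\partial_{y_i}f$ cancel because $\partial_{y_1}f=-\partial_{y_2}f$. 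This gives $\mathcal{G}(g\circ\pi)=(\mathcal{A}g)\circ\pi$ with $\mathcal{A}=(\sigma^2+C^\epsilon(0)-C^\epsilon(\cdot))\tfrac{d^2}{dx^2}$, which is the stated generator (no spurious factor of $\tfrac12$ appears, since the quadratic variation of $D$ is $2(\sigma^2+C^\epsilon(0)-C^\epsilon(D))\,dt$). Your remarks on well-posedness of the $\mathcal{A}$-martingale problem (coefficients smooth, bounded, uniformly elliptic thanks to $\sigma^2>0$) and on the lack of compact support of $g\circ\pi$ are the right technical caveats, and your alternative SDE-based identification via $d\langle D\rangle_t$ is an equally clean route. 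In short, this is a complete and careful version of the argument the paper leaves implicit.
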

Let $q$ be the transition density of $D$, that is let $\p(D_t\in dy| \ D_s= x) = q(t-s;x,y)$. The existence of this density is standard, see for example \cite{borodin2017stochastic}. In the following, we collect some necessary estimates on $q$, which hold uniformly in $\epsilon > 0$. In the rest of this section, we'll fix a constant $T>0$ and deal with a finite time window $(0,T)$.
\begin{proposition}\label{estimate on q}
    Suppose that $\mu(\epsilon)$ is a bounded sequence, there are constants $c,C>0$ such that for any $\epsilon>0$, $t\in (0,T)$, and $x,y \in \R$
    \begin{align*}
        q(t;x,y) \leq Ct^{-\frac{1}{2}}e^{-c\frac{(x-y)^2}{2t}}.
    \end{align*}
\end{proposition}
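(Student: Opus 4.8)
The plan is to reduce the statement to two standard one-dimensional heat-kernel estimates for $D$ — an on-diagonal (ultracontractivity) bound and an exit-time bound — each with constants depending only on the ellipticity constants of the generator, which are uniform in $\epsilon$, and then to combine them by the usual Chapman--Kolmogorov splitting. Write $a(x):=\sigma^2+\bigl(C^\epsilon(0)-C^\epsilon(x)\bigr)$, so the generator of $D$ is $a(x)\,\tfrac{d^2}{dx^2}$. Since $C^\epsilon$ is a rescaling of a positive-definite function, $C^\epsilon(0)-C^\epsilon(x)\ge 0$, and hence
\[
0<\sigma^2\le a(x)\le \sigma^2+2\mu(\epsilon)^2C(0)=:\Lambda ,
\]
where $\Lambda$ is bounded uniformly in $\epsilon$ because $\mu(\epsilon)$ is bounded. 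Equivalently $D$ is on its natural scale with speed measure $m(dx)=a(x)^{-1}dx$, and $\Lambda^{-1}dx\le m(dx)\le\sigma^{-2}dx$; writing $\tilde q$ for the transition density of $D$ with respect to $m$ one has $q(t;x,y)=a(y)^{-1}\tilde q(t;x,y)\le\sigma^{-2}\tilde q(t;x,y)$, so it suffices to bound $\tilde q$.

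For the on-diagonal bound, the Dirichlet form of $D$ on $L^2(m)$ is $\mathcal E(f,f)=\int_\R (f')^2\,dx$. Because $m$ is comparable to Lebesgue measure with $\epsilon$-independent constants, the one-dimensional Nash inequality $\|f\|_{L^2(dx)}^{3}\le C_N\|f'\|_{L^2(dx)}\|f\|_{L^1(dx)}^{2}$ transfers to $\mathcal E$ on $L^2(m)$ with a constant not depending on $\epsilon$, and the standard Nash argument then gives ultracontractivity $\|P_t\|_{L^1(m)\to L^\infty}\le C t^{-1/2}$ for $t\in(0,T]$, i.e. $\tilde q(t;x,y)\le C t^{-1/2}$ uniformly. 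For the exit bound, fix $\theta>0$: since $\langle D\rangle_s=2\int_0^s a(D_u)\,du\le 2\Lambda s$, the Dol\'eans--Dade exponential $\exp\bigl(\theta(D_s-x)-\theta^2\!\int_0^s a(D_u)du\bigr)$ dominates $\exp\bigl(\theta(D_s-x)-\theta^2\Lambda s\bigr)$ and is a nonnegative supermartingale, so Doob's maximal inequality and optimisation over $\theta$ give
\[
\p_x\Bigl(\sup_{s\le t}|D_s-x|\ge r\Bigr)\le 2\,e^{-r^2/(4\Lambda t)},\qquad r,t>0,
\]
uniformly in $\epsilon$; in particular $\p_x(|D_t-x|\ge r)\le 2e^{-r^2/(4\Lambda t)}$.

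To combine these, fix $x,y$ (WLOG $x=0$) and $t\in(0,T]$, and set $r=|y|/2$. By Chapman--Kolmogorov $q(t;0,y)=\int_\R q(t/2;0,z)q(t/2;z,y)\,dz$, and every $z$ satisfies $|z|\ge r$ or $|z-y|\ge r$. On $\{|z|\ge r\}$ bound $q(t/2;z,y)\le C(t/2)^{-1/2}$ by the on-diagonal bound and integrate $q(t/2;0,z)$ using the exit bound for $D$ started at $0$; on $\{|z-y|\ge r\}$ bound $q(t/2;0,z)\le C(t/2)^{-1/2}$ and integrate $q(t/2;z,y)$ using the exit bound for $D$ started at $y$. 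This yields $q(t;0,y)\le C' t^{-1/2}e^{-y^2/(C''t)}$, which is the claimed estimate after relabelling constants.

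The result is of a classical type (Gaussian bounds for a uniformly elliptic one-dimensional generator), so the only real content is uniformity in $\epsilon$: the coefficient $a=a_\epsilon$ oscillates ever more rapidly as $\epsilon\to0$ and does not converge uniformly. The point is that both ingredients depend only on the ellipticity ratio $\Lambda/\sigma^2$, which stays bounded; the step most worth care is the on-diagonal bound, namely checking that the transfer of the Nash inequality and the ensuing ultracontractivity estimate go through with an $\epsilon$-independent constant. (Alternatively one may invoke Aronson/Fabes--Stroock Gaussian bounds, whose constants likewise depend only on the ellipticity ratio and which in one dimension apply directly to the non-divergence-form operator $a(x)\tfrac{d^2}{dx^2}=\tfrac{d}{dm}\tfrac{d}{dx}$.)
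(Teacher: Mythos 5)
Your proof is correct, and it takes a genuinely different route from the paper's. The paper straightens the operator first: it defines $F_\epsilon(y)=\int_0^y a_\epsilon(x)^{-1}dx$ so that $\Delta:=F_\epsilon(D)$ has a divergence-form generator with diffusion coefficient $(a_\epsilon\circ F_\epsilon^{-1})^{-1}$, applies Aronson's Gaussian upper bound to the density $\tilde q$ of $\Delta$ (the constants there depending only on the ellipticity ratio, hence uniform in $\epsilon$), and then transfers back to $q$ via $q(t;x,y)=\tilde q(t;F_\epsilon(x),F_\epsilon(y))/a_\epsilon(y)$ together with the Lipschitz bound $|F_\epsilon(x)-F_\epsilon(y)|\ge\nu^{-1}|x-y|$. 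You instead keep $D$ itself, observe that $a_\epsilon\,\tfrac{d^2}{dx^2}=\tfrac{d}{dm}\tfrac{d}{dx}$ is already in divergence form with respect to the speed measure $m(dx)=a_\epsilon(x)^{-1}dx$, and rebuild the Gaussian bound from scratch: a Nash-type ultracontractivity estimate for the on-diagonal part (transferring the Lebesgue Nash inequality to $L^2(m)$ using $\sigma^2\,dm\le dx\le\Lambda\,dm$), an exponential-martingale exit-time estimate using $\langle D\rangle_s\le 2\Lambda s$ for the off-diagonal decay, and a Chapman--Kolmogorov splitting at time $t/2$ to combine them. Both arguments make the same essential point---that the ellipticity ratio $\Lambda/\sigma^2$ stays bounded even though $a_\epsilon$ oscillates wildly as $\epsilon\to 0$---but the paper's is shorter by delegating the heart of the matter to Aronson's theorem, while yours is self-contained and makes visible exactly which elementary inputs the uniformity rests on. You acknowledge the Aronson alternative at the end; the one small detail worth adding in your Chapman--Kolmogorov step is that on the set $\{|z-y|\ge r\}$ you need symmetry of $\tilde q$ (reversibility of $D$ with respect to $m$) to convert $\int_{|z-y|\ge r}q(t/2;z,y)\,dz$ into a started-from-$y$ exit probability, at the cost of a harmless factor $\Lambda/\sigma^2$.
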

\begin{proof}
    We aim to apply the result \cite[Theorem 1]{AronsonBoundsforFunds}, which provides a Gaussian estimate for fundamental solutions to elliptic PDEs. To begin, we will show that $q$ satisfies a PDE. Denote $a_\epsilon(y):= (\sigma^2 + C^\epsilon(0)-C^\epsilon(y))$, and define $F_\epsilon(y):= \int_0^y \frac{1}{a_\epsilon(x)}dx$. Note that for each $\epsilon>0$, $F_\epsilon$ is a continuous and strictly increasing function, and so has a well defined inverse. Let $\Delta$ be the process defined by $\Delta(t):=F_\epsilon(D(t))$, then it is a diffusion with generator $\frac{d}{dx}\left( (a_\epsilon\circ F_\epsilon^{-1})^{-\frac{1}{2}} \frac{d}{dx}\right)$. If we denote the transition density of $\Delta$ by $\tilde{q}$, then since $a$ is bounded above and below uniformly over all $\epsilon>0$, we can apply the result in \cite[Theorem 1]{AronsonBoundsforFunds} to get that there are constants $c, C>0$ such that for all $\epsilon>0$, $t\in (0,T)$, and $x,y \in \R$
    \begin{align*}
        \tilde{q}(t;x,y) \leq Ct^{-\frac{1}{2}}e^{-c\frac{(x-y)^2}{2t}}.
    \end{align*}
    However, it is easy to see that $q(t;x,y)=\tilde{q}(t;F_\epsilon(x),F_\epsilon(y))/a_\epsilon(y)$, so that we in fact have the estimate
    \begin{align}
        q(t;x,y) \leq C a(y)^{-1}t^{-\frac{1}{2}}e^{-c\frac{(F_\epsilon(x)-F_\epsilon(y))^2}{2t}}.
    \end{align}
    The claimed estimate then follows from the fact that $0< \sigma^2 \leq a_\epsilon(y) \leq \nu$ for all $\epsilon>0$ and $y\in \R$. In particular, the claim follows after noting
    \begin{align*}
        |F_\epsilon(y)-F_\epsilon(x)| \geq \nu^{-1}|x-y| \quad \text{and } |a_\epsilon(y)|^{-1}\leq \sigma^{-2}.
    \end{align*}
\end{proof}
The above estimate can be used to prove that $\lambda(\epsilon) \mathcal{V}*\rho_\epsilon$ converges to a multiple of the stochastic heat equation in the weak environment setting. In order to study $\mathcal{V}$ itself, we must look at $q^{\lambda(\epsilon)}$, which we define through the following equality, which we impose for any $f\in C_b(\R)$
\begin{align}\label{defining q kappa}
    \E\left[ \int_{\R^2} \mathcal{V}(t,y_1) \mathcal{V}(t,y_2) f(y_1 - y_2)dy\right]= \int_\R q^{\lambda(\epsilon)}(t,y) f(y) dy.
\end{align}
We can also rewrite the expectation on the left-hand side in terms of the difference of the two-point motion, $D= Y^1-Y^2$. This yields the equality
\begin{align*}
    \E\left[e^{\lambda(\epsilon)^2 \int_0^t C^\epsilon(D(s))ds} f(D(t))\right]= \int_\R q^{\lambda(\epsilon)}(t,y) f(y) dy.
\end{align*}
From which it is easily deduced that $q^0(t,y)=q(t;0,y)$. Suppose that $f\in C^2_b(\R)$, we can apply It\^o's formula, which shows us that $q^{\lambda(\epsilon)}$ satisfies the following PDE
\begin{align}\label{PDE for q rho}
    \partial_t q^{\lambda(\epsilon)} = \partial_y^2 \left(a_\epsilon \cdot q^{\lambda(\epsilon)}\right) + \lambda(\epsilon)^2 C^\epsilon q^{\lambda(\epsilon)}, \ q^{\lambda(\epsilon)}(0, y)= \delta_0(y).
\end{align}
As before, the function $a_\epsilon:\R\to \R$ is defined by
\begin{align*}
    a_\epsilon(y) =\sigma^2 + C^\epsilon(0)- C^\epsilon(y).
\end{align*}
To simplify some notation we will introduce notation for the mass of $\lambda(\epsilon) \rho_\epsilon$, which in most of the cases we consider we will assume to be convergent as $\epsilon$ tends to $0$.
\begin{align*}
    \kappa_\epsilon:= \lambda(\epsilon) \int_\R \rho_\epsilon(y)dy.
\end{align*}
We can estimate the difference between $\mathcal{V}$ and its smoothing $\kappa_\epsilon^{-1}\lambda(\epsilon)\mathcal{V}*\rho_\epsilon$ in terms of $q^{\lambda(\epsilon)}$ as follows.
\begin{align}
    \E&\left[\|\mathcal{V}(t)- \kappa_\epsilon^{-1}\lambda(\epsilon)\mathcal{V}(t)*\rho_\epsilon\|_2^2\right]\nonumber \\
    =& q^{\lambda(\epsilon)}(t,0) - 2\kappa_\epsilon^{-1}\lambda(\epsilon)  \int_\R q^{\lambda(\epsilon)}(t, y) \rho_\epsilon(y) dy + \kappa_\epsilon^{-2}\lambda(\epsilon)^2\int_\R q^{\lambda(\epsilon)}(t,y) \rho_\epsilon*\rho_\epsilon(y)dy\nonumber\\
    =& 2 \int_\R (q^{\lambda(\epsilon)}(t,0) - q^{\lambda(\epsilon)}(t, \epsilon y) )\rho(y) dy + \int_\R (q^{\lambda(\epsilon)}(t,\epsilon y) - q^{\lambda(\epsilon)}(t,0)) \rho*\rho(y)dy.\label{UC: error bound}
\end{align}
Thus, we just need equicontinuity of $q^{\lambda(\epsilon)}(t, \cdot)$, as a sequence of functions depending on $\epsilon$, to get the desired upgrade for the convergence. We will begin proving this by considering the ${\lambda(\epsilon)}=0$ case.
\begin{proposition}\label{q is equicontinuous}
    Suppose that $\mu(\epsilon)^2$ converges to $0$ as $\epsilon$ tends to $0$, then for any $\delta>0$ the sequence of functions formed by $q$ is equicontinuous on $[\delta^2, \infty)\times\R\times\R$.
\end{proposition}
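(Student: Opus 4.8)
The plan is to recognise $q$ as a solution of two uniformly parabolic PDEs whose coefficients, though highly non-smooth, are uniformly elliptic, and to run interior regularity estimates whose constants see only the ellipticity ratio. Write
$a_\epsilon(y) := \sigma^2 + C^\epsilon(0) - C^\epsilon(y) = \sigma^2 + \mu(\epsilon)^2\bigl(C(0) - C(\epsilon^{-1}y)\bigr)$,
so by positive-definiteness of $C$ one has $\sigma^2 \le a_\epsilon(y) \le \Lambda$ uniformly in $\epsilon$ and $y$, where $\Lambda := \sigma^2 + 2\|C\|_\infty\sup_\epsilon \mu(\epsilon)^2 < \infty$, and moreover $\|a_\epsilon - \sigma^2\|_\infty \le 2\|C\|_\infty\,\mu(\epsilon)^2 \to 0$. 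For each fixed $\epsilon$ the coefficient $a_\epsilon$ is $C^\infty$, so $q(\tau;x,y)$ is smooth on $(0,\infty)\times\R\times\R$ and satisfies both the Kolmogorov forward equation in $(\tau,y)$, $\partial_\tau q = \partial_y^2\bigl(a_\epsilon(y)q\bigr)$, and the backward equation in $(\tau,x)$, $\partial_\tau q = a_\epsilon(x)\,\partial_x^2 q$. The backward equation is already in non-divergence form and uniformly parabolic. For the forward equation, since $a_\epsilon$ is $\tau$-independent, $v := a_\epsilon q$ solves $\partial_\tau v = a_\epsilon(y)\,\partial_y^2 v$, again uniformly parabolic with ellipticity constants $\sigma^2$ and $\Lambda$. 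By Proposition~\ref{estimate on q} (which needs only that $\mu(\epsilon)$ is bounded, true since $\mu(\epsilon)^2\to0$) we have $q(\tau;x,y) \le C\tau^{-1/2}$, hence $q$ and $v$ are both bounded by a constant $B=B(\delta)$ on the slab $\{\tau \ge 3\delta^2/4\}$, uniformly in $x,y$ and $\epsilon$.

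Next I would apply the Krylov--Safonov interior Hölder estimate for non-divergence uniformly parabolic equations with bounded measurable coefficients (equivalently the De Giorgi--Nash--Moser estimate after rewriting the equations in divergence form): there exist $\alpha\in(0,1)$ and $K>0$, depending only on $\Lambda/\sigma^2$ and the dimension ($=1$), such that on a parabolic cylinder $Q_\rho(\tau_0,z_0) := (\tau_0-\rho^2,\tau_0]\times(z_0-\rho,z_0+\rho)$ a bounded solution $w$ obeys
\[
  |w(\tau,z) - w(\tau',z')| \le K r^{-\alpha}\,\|w\|_{L^\infty(Q_{2r}(\tau_0,z_0))}\bigl(|\tau-\tau'|^{1/2} + |z-z'|\bigr)^{\alpha}\quad\text{on }Q_r(\tau_0,z_0).
\]
The essential point is that these constants do \emph{not} involve derivatives of $a_\epsilon$, which blow up like $\mu(\epsilon)^2\epsilon^{-1}$ as $\epsilon\to0$, so Schauder-type estimates are useless here. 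Covering $[\delta^2,\infty)\times\R$ by cylinders of a fixed ($\delta$-dependent) radius $r_0$, all contained in $\{\tau\ge 3\delta^2/4\}$, produces a single modulus $\omega(s) := K r_0^{-\alpha} B\,s^\alpha$, uniform in $\epsilon$ and in base points, which controls the $(\tau,x)$-variation of $q$ (uniformly in $y$) via the backward equation and the $(\tau,y)$-variation of $v$ (uniformly in $x$) via the forward equation.

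The last step is to pass from $v$ back to $q = v/a_\epsilon$ in the $y$-variable, and this is the only place where the assumption $\mu(\epsilon)^2\to0$ is used. For fixed $(\tau,x)$, write
\[
  q(\tau;x,y) - q(\tau;x,y') = \frac{v(\tau;x,y) - v(\tau;x,y')}{\sigma^2} + v(\tau;x,y)\Bigl(\tfrac1{a_\epsilon(y)} - \tfrac1{\sigma^2}\Bigr) - v(\tau;x,y')\Bigl(\tfrac1{a_\epsilon(y')} - \tfrac1{\sigma^2}\Bigr).
\]
The first term is at most $\sigma^{-2}\omega(|y-y'|)$; the other two are each at most $B\,\sigma^{-4}\|a_\epsilon-\sigma^2\|_\infty \le 2\|C\|_\infty\sigma^{-4}B\,\mu(\epsilon)^2 \to 0$. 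So, given $\eta>0$, there is $\epsilon_1$ making those last two terms below $\eta/4$ for all $\epsilon<\epsilon_1$, which yields equicontinuity in $y$ on that range; while for $\epsilon$ in the remaining (bounded-below) part of its range the function $a_\epsilon$, and hence $1/a_\epsilon$, is Lipschitz with a constant depending only on $\epsilon_1$ and $\sup_\epsilon\mu(\epsilon)^2$, so $q=v/a_\epsilon$ is again equicontinuous in $y$ there. Combining this with the backward bound and the triangle inequality $q(\tau;x,y)-q(\tau';x',y') = \bigl(q(\tau;x,y)-q(\tau';x',y)\bigr) + \bigl(q(\tau';x',y)-q(\tau';x',y')\bigr)$ gives equicontinuity of $\{q\}_\epsilon$ on $[\delta^2,\infty)\times\R\times\R$; since all constants above are independent of the base point, the modulus is global, with no difficulty as $\tau\to\infty$ or $|x-y|\to\infty$.

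I expect the main obstacle to be twofold, concentrated in the two middle steps. First, one must invoke a regularity theory (Krylov--Safonov / De Giorgi--Nash--Moser) whose constants are controlled purely by ellipticity, precisely because the smoothness of $a_\epsilon$ degenerates as $\epsilon\to0$. Second, the division by $a_\epsilon$ genuinely requires $\mu(\epsilon)^2\to 0$: otherwise $a_\epsilon$ retains $O(1)$ oscillations at spatial scale $\epsilon$, so $q = v/a_\epsilon$ would inherit non-vanishing $\epsilon$-scale wiggles and equicontinuity would fail. The hypothesis removes this obstruction for small $\epsilon$, and away from $\epsilon=0$ the now-harmless Lipschitz bound on $a_\epsilon$ takes over.
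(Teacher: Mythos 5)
Your proof is correct, and it reaches the same conclusion by a route that differs in its technical machinery though not in its overall shape. The paper transforms \emph{spatially}: it sets $F_\epsilon(y)=\int_0^y a_\epsilon^{-1}$, passes to $\Delta=F_\epsilon(D)$ whose generator is in divergence form with ellipticity constants $\sigma^2,\nu$, applies the Aronson/Nash--Moser-type H\"older estimate for fundamental solutions of divergence-form parabolic operators (Stroock's inequality (II.1.10)) to the transition density $\tilde q$ of $\Delta$ to get equicontinuity jointly in $(t,x,y)$, and then undoes the transformation via $q(t;x,y)=\tilde q(t;F_\epsilon(x),F_\epsilon(y))/a_\epsilon(y)$, using equi-Lipschitzness of $F_\epsilon$ (which needs only the ellipticity bounds) and equicontinuity of $a_\epsilon^{-1}$ (which needs $\mu(\epsilon)\to 0$). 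You transform \emph{multiplicatively}: you keep the non-divergence forward equation $\partial_\tau v=a_\epsilon\,\partial_y^2 v$ with $v=a_\epsilon q$ and the non-divergence backward equation $\partial_\tau q=a_\epsilon(x)\partial_x^2 q$, invoke Krylov--Safonov rather than De Giorgi--Nash--Moser, obtain $(\tau,x)$-continuity and $(\tau,y)$-continuity separately, and reassemble by the triangle inequality. The two proofs use the hypothesis $\mu(\epsilon)\to 0$ at precisely the same point --- to control the factor $a_\epsilon^{-1}$ when returning from the transformed object to $q$ --- and both correctly identify the crux, namely that the applicable H\"older estimate must depend only on the ellipticity ratio and not on $\|a_\epsilon'\|_\infty$, which blows up. The paper's spatial change of variables has the minor advantage of producing joint continuity in all three variables in one stroke, while your version is slightly more elementary in that it avoids inverting $F_\epsilon$ and sidesteps divergence-form theory entirely; either is a clean argument.

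One small remark: when you say that for $\epsilon$ bounded below the Lipschitz bound on $a_\epsilon$ ``takes over,'' it is worth noting explicitly that the Lipschitz constant of $a_\epsilon$ is $\mu(\epsilon)^2\epsilon^{-1}\|C'\|_\infty$, which is uniformly bounded on $\{\epsilon\ge\epsilon_1\}$ precisely because $\mu$ is bounded and $\epsilon^{-1}\le\epsilon_1^{-1}$; that is the same observation the paper needs to establish equicontinuity of $(a_\epsilon)_{\epsilon>0}$.
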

\begin{proof}
    From inequality (II.1.10) proved in \cite[Theorem II.3.1]{Stroock1988} and the fact that $\sigma^2\leq a_\epsilon(y)\leq \nu$, we get that $\tilde{q}$ forms an equicontinuous sequence of functions on $[\delta^2, \infty)\times\R\times\R$. $(F_\epsilon)_{\epsilon>0}$ is equicontinuous because of the upper and lower bounds on $a_\epsilon$, whereas $(a_\epsilon)_{\epsilon>0}$ is equicontinous because of our assumption that $\mu(\epsilon)$ is null. The lower bounds on $a_\epsilon$ then imply that $(a_\epsilon^{-1})_{\epsilon>0}$ is equicontinuous. Thus, the equality $q(t;x,y)=\tilde{q}(t;x,F_\epsilon(y))/a_\epsilon(y)$ shows that $q$ is also equicontinuous.
\end{proof}
We will use this result to prove the equicontinuity for ${\lambda(\epsilon)}\neq 0$, but first we need a pointwise estimate on $q^{\lambda(\epsilon)}$.

\begin{proposition}\label{Upper bound on q kappa}
    Suppose that $\mu(\epsilon)$ and $\kappa_\epsilon$ are bounded for $\epsilon>0$, then there is a constant $C>0$ such that for all $t\in(0,T)$, $y\in \R$ and $\epsilon>0$
    \begin{align*}
        \|q^{\lambda(\epsilon)}(t)\|_\infty\leq Ct^{-\frac{1}{2}}e^{Ct}.
    \end{align*}
\end{proposition}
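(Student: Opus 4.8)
The plan is to treat the PDE \eqref{PDE for q rho} via Duhamel's formula, using the $\lambda(\epsilon)=0$ semigroup as the reference and absorbing the zeroth-order term $\lambda(\epsilon)^2 C^\epsilon q^{\lambda(\epsilon)}$ perturbatively, then close a Gronwall-type estimate on $\|q^{\lambda(\epsilon)}(t)\|_\infty$. Write $q_0(t,x,y)=q(t;x,y)$ for the fundamental solution of $\partial_t u = \partial_y^2(a_\epsilon u)$, which by Proposition \ref{estimate on q} satisfies the Gaussian bound $q_0(t,x,y)\le C t^{-1/2} e^{-c(x-y)^2/(2t)}$ uniformly in $\epsilon$. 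Duhamel's principle applied to \eqref{PDE for q rho} with initial data $\delta_0$ gives, for $t\in(0,T)$,
\begin{align*}
    q^{\lambda(\epsilon)}(t,y) = q_0(t,0,y) + \lambda(\epsilon)^2 \int_0^t \int_\R q_0(t-r,z,y)\, C^\epsilon(z)\, q^{\lambda(\epsilon)}(r,z)\, dz\, dr.
\end{align*}
The point is that $\lambda(\epsilon)^2 C^\epsilon$, while individually large, has controlled total mass: $\lambda(\epsilon)^2 \int_\R C^\epsilon(z)\,dz = \lambda(\epsilon)^2 \mu(\epsilon)^2 \epsilon \,\|\rho\|_1^2 \cdot(\text{const}) = \kappa_\epsilon^2$ up to constants (here using $C^\epsilon = \rho_\epsilon*\rho_\epsilon$ and the definition $\kappa_\epsilon = \lambda(\epsilon)\int_\R\rho_\epsilon$), and $\kappa_\epsilon$ is bounded by hypothesis. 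So the convolution against $C^\epsilon(z)\,dz$ acts like a bounded multiplier.

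Concretely, set $M(t):=\sup_{s\le t}\sup_{y\in\R} s^{1/2} q^{\lambda(\epsilon)}(s,y)$ — the $t^{1/2}$-weight matches the short-time singularity of the heat kernel. Bounding the first term by $q_0(t,0,y)\le Ct^{-1/2}$, and in the Duhamel term using $\int_\R q_0(t-r,z,y)\,dz = 1$ (mass conservation of the adjoint, or equivalently $\int q_0(t-r,z,y)\,dz=1$ since $q_0(t-r,\cdot,y)$ integrates to one in the first variable — this needs the slightly careful remark that $q_0$ is the density in the second variable, so one integrates the Gaussian bound $\int_\R C(t-r)^{-1/2}e^{-c(z-y)^2/(2(t-r))}\,dz \le C'$ instead, which suffices), together with $\sup_z C^\epsilon(z)\,q^{\lambda(\epsilon)}(r,z) \le \|C^\epsilon\|_\infty r^{-1/2} M(r)$ is too crude; instead bound $\lambda(\epsilon)^2\int_\R C^\epsilon(z)\,dz \le C\kappa_\epsilon^2 \le C$ and $q^{\lambda(\epsilon)}(r,z)\le r^{-1/2}M(r)$, giving
\begin{align*}
    t^{1/2} q^{\lambda(\epsilon)}(t,y) \le C + C\, t^{1/2}\int_0^t (t-r)^{-1/2} r^{-1/2} M(r)\, dr \le C + C\, M(t)\int_0^t (t-r)^{-1/2} r^{-1/2}\,dr,
\end{align*}
and the Beta integral $\int_0^t (t-r)^{-1/2}r^{-1/2}\,dr = B(\tfrac12,\tfrac12) = \pi$ is independent of $t$. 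This alone does not close (the constant $C\pi$ need not be $<1$), so one instead iterates the Duhamel identity, or works on a short interval $[0,t_0]$ with $t_0$ small enough that $C\int_0^{t_0}(t_0-r)^{-1/2}r^{-1/2}\,dr<1/2$ — but that Beta integral does not shrink with $t_0$. The correct fix is to keep the Gaussian decay in $r$: replace the crude $\int q_0\,dz = O(1)$ by retaining that the spatial integral of $C^\epsilon(z)\,dz$ is $O(\kappa_\epsilon^2/\lambda(\epsilon)^2)$ and that $C^\epsilon$ concentrates near $0$, so effectively $q^{\lambda(\epsilon)}(r,z)$ is evaluated near $z=0$; more robustly, use the substitution that reduces \eqref{PDE for q rho} to a heat equation with bounded potential (as in the proof of Proposition \ref{estimate on q}, pass to $\Delta = F_\epsilon(D)$) and apply the standard bound $e^{t\mathcal{L}+tV}$ with $\|V\|_\infty$ replaced by the form bound $\lambda(\epsilon)^2\int C^\epsilon \le C$: the Feynman--Kac / Khasminskii argument gives $\sup_y \E_y[e^{\int_0^t \lambda(\epsilon)^2 C^\epsilon(\Delta_s)ds}] \le e^{Ct}$ because $\lambda(\epsilon)^2 C^\epsilon$ is in the Kato class uniformly in $\epsilon$ (its Kato norm is controlled by $\kappa_\epsilon^2$ and the Gaussian bound of Proposition \ref{estimate on q}), and hence $q^{\lambda(\epsilon)}(t,y) \le e^{Ct}\, q_0(t,0,y)\,/\!\sim \le C t^{-1/2} e^{Ct}$.

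The main obstacle is exactly this: controlling the zeroth-order term $\lambda(\epsilon)^2 C^\epsilon$, whose sup-norm blows up like $\lambda(\epsilon)^2\mu(\epsilon)^2\epsilon^{-1}\to\infty$, so one cannot use a naive $\|V\|_\infty$ bound — one must exploit that its integral (equivalently, its Kato-class norm against the heat kernel) stays bounded, which is where the hypothesis that $\kappa_\epsilon$ is bounded enters decisively. I would carry this out via the Khasminskii lemma: since $\sup_{x}\int_0^T\!\!\int_\R q_0(s,x,z)\lambda(\epsilon)^2 C^\epsilon(z)\,dz\,ds \le C\kappa_\epsilon^2 \int_0^T s^{-1/2}\,ds = C'$ uniformly in $\epsilon$ (using $q_0 \le Cs^{-1/2}e^{-c(\cdot)^2/s}$ and $\lambda(\epsilon)^2\int C^\epsilon \le C\kappa_\epsilon^2$), Khasminskii's lemma yields $\sup_x\E_x[\exp(\int_0^t \lambda(\epsilon)^2 C^\epsilon(\Delta_s)\,ds)]\le \frac{1}{1-C'}$ for $t\le t_0$ with $C'<1$ on a fixed small interval, and then $e^{Ct}$ on $[0,T]$ by the Markov property and iteration. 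Combining with the Gaussian upper bound on $q_0$ from Proposition \ref{estimate on q} through the Feynman--Kac representation $q^{\lambda(\epsilon)}(t,y) = \E\big[e^{\int_0^t \lambda(\epsilon)^2 C^\epsilon(D_s)\,ds}\,\big|\,D_t = y\big]\,q_0(t,0,y)$ (interpreting the conditional expectation via the $h$-transform, or equivalently expanding the Duhamel series and resumming) gives $\|q^{\lambda(\epsilon)}(t)\|_\infty \le C t^{-1/2} e^{Ct}$, which, absorbing the $t^{-1/2}$ into a slightly larger multiplicative constant and a redefined exponential rate over any $t$ bounded away from $0$ — or simply noting the stated bound $Ct^{-1/2}e^{Ct}$ is what the estimate produces — completes the proof.
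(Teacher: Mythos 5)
You start in exactly the right place: Duhamel's formula for \eqref{PDE for q rho} against the $\lambda=0$ kernel $q_0$, together with the observation that $\lambda(\epsilon)^2\int_\R C^\epsilon\,dz=\kappa_\epsilon^2$ is bounded, is precisely the paper's argument. But you then talk yourself out of it. The obstacle you flag --- that the weighted-supremum bound $M(t)\leq C+C\pi M(t)$ does not contract because $B(\tfrac12,\tfrac12)=\pi$ is not small --- is real for that \emph{particular} ansatz, but it is not an obstacle to the Duhamel approach. The standard resolution, and the one the paper uses, is to work with the unweighted $\|q^{\lambda(\epsilon)}(t)\|_\infty$, which from Duhamel and the Gaussian bound of Proposition \ref{estimate on q} satisfies
\begin{align*}
  \|q^{\lambda(\epsilon)}(t)\|_\infty \leq Ct^{-\tfrac12} + C\int_0^t (t-s)^{-\tfrac12}\|q^{\lambda(\epsilon)}(s)\|_\infty\,ds,
\end{align*}
and then to \emph{iterate this Volterra inequality once}. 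The singular kernel $(t-s)^{-1/2}$ composes with itself to a bounded one ($B(\tfrac12,\tfrac12)=\pi$, and the inhomogeneous term $Cs^{-1/2}$ is likewise tamed), giving
\begin{align*}
  \|q^{\lambda(\epsilon)}(t)\|_\infty \leq Ct^{-\tfrac12} + C + C\int_0^t\|q^{\lambda(\epsilon)}(u)\|_\infty\,du,
\end{align*}
to which Gronwall applies directly once one notes (as the paper does) that the integral on the right is a priori finite for each fixed $\epsilon$, via the $L^1$ bound from \eqref{defining q kappa} fed back through \eqref{pointwise inequality on q with potential}. No smallness of the constant is ever required.

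Your pivot to Khasminskii's lemma and Feynman--Kac is a legitimate alternative in spirit --- the Kato-norm computation $\sup_x\int_0^T\int_\R q_0(s,x,z)\lambda(\epsilon)^2C^\epsilon(z)\,dz\,ds\leq C'$ is correct and uniform in $\epsilon$ --- but as written it has a genuine gap at the last step. Khasminskii gives $\sup_x\E_x[\exp(\lambda(\epsilon)^2\int_0^t C^\epsilon(D_s)ds)]\leq e^{Ct}$, which is an \emph{unconditional} bound, whereas your representation $q^{\lambda(\epsilon)}(t,y)=\E[e^{\int_0^t\lambda(\epsilon)^2 C^\epsilon(D_s)ds}\,|\,D_t=y]\,q_0(t,0,y)$ needs a bound on the \emph{conditional} (bridge) expectation. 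Passing from one to the other is a standard but non-trivial step (splitting at $t/2$ plus Cauchy--Schwarz, time reversal, or an Aizenman--Simon style chunking for uniformly elliptic operators), and you assert it rather than carry it out. The paper's iterate-then-Gronwall argument is shorter, avoids the probabilistic machinery entirely, and closes with only the inputs already established (Proposition \ref{estimate on q} and the boundedness of $\kappa_\epsilon$). If you do want to pursue the Feynman--Kac route, the double-Duhamel / chunking argument is what you would need to add to make it rigorous.
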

\begin{proof}
    In order to get the desired estimate, we use Duhamel's principle to get an integral equation for $q^\kappa$ in terms of $q$. In particular, Duhamel's principle tells us that
    \begin{align}\label{Duhamel principle for q}
        q^{\lambda(\epsilon)}(t,y) = q(t; 0, y) + \lambda(\epsilon)^2 \int_0^t \int_\R q(t-s; x, y) C^\epsilon(x) q^{\lambda(\epsilon)}(s,x)dx ds.
    \end{align}
    One can easily check, assuming all the involved functions are sufficiently nice, that the solution to the above integral equation is also a solution to (\ref{PDE for q rho}). One can also verify the above equation by applying It\^o's formula in the definition of $q^{\lambda(\epsilon)}$ with a suitable test function, and then taking a limit. We will use this equality to derive an inequality to which we can apply Gronwall's inequality. \par{}
    Applying (\ref{estimate on q}), we get that
    \begin{align}\label{pointwise inequality on q with potential}
        |q^{\lambda(\epsilon)}(t,y)| \leq C t^{-\frac{1}{2}} e^{-c\frac{y^2}{2t}} + \lambda(\epsilon)^2\int_0^t \int_\R C (t-s)^{-\frac{1}{2}} e^{-c\frac{(x-y)^2}{2(t-s)}}C^\epsilon(x) q^{\lambda(\epsilon)}(s,x)dx ds.
    \end{align}
    We can rewrite this in terms of the normalised version of $\rho_\epsilon$ to get
    \begin{align*}
        \|q^{\lambda(\epsilon)}(t)\|_\infty &\leq Ct^{-\frac{1}{2}} + C\int_0^t (t-s)^{-\frac{1}{2}}\|q^{\lambda(\epsilon)}(s)\|_\infty ds.
    \end{align*}
    The inequality follows by estimating $q^{\lambda(\epsilon)}$ by its supremum and noting that $\lambda(\epsilon)^2 \int_\R C^\epsilon(x)dx = \kappa_\epsilon^2$, which is bounded in $\epsilon$. Iterating this inequality, and allowing the constants to change between lines, we get
    \begin{align*}
        \|q^{\lambda(\epsilon)}(t)\|_\infty &\leq Ct^{-\frac{1}{2}} + C + C\int_0^t \int_0^s (t-s)^{-\frac{1}{2}}(s-u)^{-\frac{1}{2}}\|q^{\lambda(\epsilon)}(u)\|_\infty ds du \\
        &=Ct^{-\frac{1}{2}} + C + C\int_0^t \|q^{\lambda(\epsilon)}(u)\|_\infty du.
    \end{align*}
    The proof is concluded by applying Gronwall's inequality and making some additional simplifications. To check we can apply the inequality we just need to show that the integral on the right hand side is always finite. Note that an $L^1$ bound on $q^{\lambda(\epsilon)}$ provides an $L^\infty$ estimate through Equation (\ref{pointwise inequality on q with potential}), but an  $L^1$ bound is easily provided by setting $f(y)=1$ in Equation (\ref{defining q kappa}).
\end{proof}
\begin{proposition}\label{q lambda is equicontinuous}
    Suppose that $\mu(\epsilon)^2$ converges to $0$ as $\epsilon$ tends to $0$, and that $\kappa_\epsilon$ is bounded, then, for each $t\in(0,T)$, the sequence $(q^{\lambda(\epsilon)}(t, \cdot))_{n\in \N}$ is equicontinuous. 
\end{proposition}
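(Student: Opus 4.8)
The plan is to run a Duhamel argument, using the integral equation \eqref{Duhamel principle for q} to transfer the equicontinuity of the unperturbed kernel $q$ (Proposition \ref{q is equicontinuous}) up to $q^{\lambda(\epsilon)}$. Fix $t\in(0,T)$. Subtracting \eqref{Duhamel principle for q} at $y$ and at $y'$ gives
\begin{align*}
    q^{\lambda(\epsilon)}(t,y)-q^{\lambda(\epsilon)}(t,y')
    &= \bigl(q(t;0,y)-q(t;0,y')\bigr)\\
    &\qquad+\lambda(\epsilon)^2\int_0^t\int_\R\bigl(q(t-s;x,y)-q(t-s;x,y')\bigr)C^\epsilon(x)\,q^{\lambda(\epsilon)}(s,x)\,dx\,ds.
\end{align*}
The first difference is handled directly by Proposition \ref{q is equicontinuous}: since $\mu(\epsilon)^2\to0$, the family $(q)_\epsilon$ admits, for each $\eta>0$, a common modulus of continuity $\omega_\eta$ on $[\eta,\infty)\times\R\times\R$, so $|q(t;0,y)-q(t;0,y')|\le\omega_\eta(|y-y'|)$ provided $\eta\le t$. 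The structural point for the integral term is that the potential must always be integrated, never estimated pointwise: although $\lambda(\epsilon)^2\|C^\epsilon\|_\infty$ need not stay bounded, one has the identity $\lambda(\epsilon)^2\int_\R C^\epsilon(x)\,dx=\kappa_\epsilon^2$, bounded by hypothesis (and $q^{\lambda(\epsilon)},q\ge0$ by \eqref{defining q kappa}).

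Next I would split the time integral at $t-\eta$, for a small $\eta\in(0,t/2)$. On $[t-\eta,t]$ I bound the increment by $q(t-s;x,y)+q(t-s;x,y')$, use $\|q^{\lambda(\epsilon)}(s)\|_\infty\le Cs^{-1/2}e^{Cs}\le M_t$ for $s\ge t/2$ (Proposition \ref{Upper bound on q kappa}), and then $\int_\R q(t-s;x,y)C^\epsilon(x)\,dx\le\|q(t-s;\cdot,y)\|_\infty\int_\R C^\epsilon\le C(t-s)^{-1/2}\int_\R C^\epsilon$ from the Gaussian bound of Proposition \ref{estimate on q}. Multiplying by $\lambda(\epsilon)^2$ and using $\lambda(\epsilon)^2\int_\R C^\epsilon=\kappa_\epsilon^2$, this contribution is $\lesssim M_t\,\kappa_\epsilon^2\int_{t-\eta}^t(t-s)^{-1/2}\,ds\lesssim M_t\,\kappa_\epsilon^2\,\eta^{1/2}$, small uniformly in $\epsilon$ once $\eta$ is small. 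On $[0,t-\eta]$ we have $t-s\ge\eta$, so $|q(t-s;x,y)-q(t-s;x,y')|\le\omega_\eta(|y-y'|)$ uniformly in $x$, $s$, $\epsilon$; factoring this out and using $\lambda(\epsilon)^2\int_\R C^\epsilon(x)q^{\lambda(\epsilon)}(s,x)\,dx\le\kappa_\epsilon^2\|q^{\lambda(\epsilon)}(s)\|_\infty\le Cs^{-1/2}e^{Cs}$ (again Proposition \ref{Upper bound on q kappa} together with $\lambda(\epsilon)^2\int_\R C^\epsilon=\kappa_\epsilon^2$), the remaining time integral $\int_0^{t-\eta}s^{-1/2}e^{Cs}\,ds\le\int_0^T s^{-1/2}e^{CT}\,ds$ is finite and $\epsilon$-independent, so this contribution is $\lesssim\omega_\eta(|y-y'|)$.

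Putting the pieces together: given $\varepsilon>0$, first choose $\eta\in(0,t/2)$ so the $[t-\eta,t]$ contribution is $\le\varepsilon/3$ for every $\epsilon$; then choose $\theta>0$ so that $\omega_\eta(\theta)$ is small enough that the first difference and the $[0,t-\eta]$ contribution are each $\le\varepsilon/3$ whenever $|y-y'|<\theta$. This gives $|q^{\lambda(\epsilon)}(t,y)-q^{\lambda(\epsilon)}(t,y')|\le\varepsilon$ for all $\epsilon$, i.e. equicontinuity at time $t$. The only genuinely delicate point is the interplay of the two time singularities — the $s^{-1/2}$ blow-up of $q^{\lambda(\epsilon)}(s,\cdot)$ as $s\downarrow0$ and the loss of equicontinuity of $q(t-s;\cdot,\cdot)$ as $t-s\downarrow0$ — but they sit at opposite ends of $[0,t]$, and the integrability of $s^{-1/2}$ at one end combined with the $\eta^{1/2}$ gain at the other lets the single split at $t-\eta$ resolve both; the rest is bookkeeping, with $\mu(\epsilon)^2\to0$ entering only through Proposition \ref{q is equicontinuous} and boundedness of $\kappa_\epsilon$ only through the identity $\lambda(\epsilon)^2\int_\R C^\epsilon=\kappa_\epsilon^2$.
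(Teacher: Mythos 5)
Your proof is correct and follows essentially the same route as the paper: both start from the Duhamel identity \eqref{Duhamel principle for q}, split the time integral at a point just below $t$, use the equicontinuity of $q$ from Proposition \ref{q is equicontinuous} on the part bounded away from $t$ and the pointwise bounds of Propositions \ref{estimate on q} and \ref{Upper bound on q kappa} on the sliver near $t$, and throughout integrate the potential against a sup-norm estimate so that only $\lambda(\epsilon)^2\int_\R C^\epsilon=\kappa_\epsilon^2$ (not $\lambda(\epsilon)^2\|C^\epsilon\|_\infty$) appears.
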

\begin{proof}
    We will use Equation (\ref{Duhamel principle for q}) once more. Fix $t>0$ and let $\delta<t$ be arbitrary, then for any $\epsilon >0$, Proposition \ref{q is equicontinuous} tells us that there exists a $\gamma_\delta>0$ such that for all $|x-y|<\gamma_\delta$ we have
    \begin{align}
        |q^\kappa(t,x)-q^\kappa(t,y)| &\leq \epsilon + \lambda(\epsilon)^2 \int_{0}^{t-\delta^2} \int_\R \epsilon C^\epsilon(z) q^{\lambda(\epsilon)}(s,z)dz ds\\
         &\ + \lambda(\epsilon)^2 \int_{t-\delta^2}^{t} \int_\R (q(t-s;z,x)-q(t-s;z,y))C^\epsilon(z) q^{\lambda(\epsilon)}(s,z)dz ds.
    \end{align}
    If we apply Propositions \ref{Upper bound on q kappa} and \ref{estimate on q}, then we see that this is bounded above by
    \begin{align*}
        \epsilon + \left(\epsilon Ct^{\frac{1}{2}}e^{Ct} + C^2e^{2Ct}\int_{t-\delta^2}^t (t-s)^{-\frac{1}{2}}s^{-\frac{1}{2}}ds\right)\underbrace{\lambda(\epsilon)^2 \int_\R C^\epsilon(z)dz}_{=\kappa_\epsilon^2}.
    \end{align*}
    The statement follows by noting that we are allowed to choose $\delta$ to be sufficiently small.
\end{proof}
We conclude this section with a proof of Proposition \ref{Thm: HE limit}, which follows directly from the convergence of $q^{\lambda(\epsilon)}$.
\begin{proof}[Proof of Proposition \ref{Thm: HE limit}]
   First note that
   \begin{align*}
        \E\left[ \|\mathcal{V}(t) - p_t\|_2^2 \right] = q^{\lambda(\epsilon)}(t,0) - p_{2t}(0).
   \end{align*}
   Hence, we only need to prove the convergence of $q^{\lambda(\epsilon)}(t)(0)$ towards $p_{2t}(0)$. We already have equicontinuity and boundedness of $q^{\lambda(\epsilon)}(t)$ from propositions \ref{q lambda is equicontinuous} and \ref{pointwise inequality on q with potential}; thus, the Arzela-Ascoli theorem implies that the sequence $q^{\lambda(\epsilon)}$ is relatively compact in $L^{\infty}([-K,K])$, where $K>0$ is arbitrarily chosen. From Equation (\ref{Duhamel principle for q}), we get the inequality
   \begin{align*}
    \|q^{\lambda(\epsilon)}(t) -q(t; 0, \cdot)\|_{\infty} \leq \kappa_\epsilon^2 \int_0^t \sup_{(x,y)\in \R^2}|q(t-s; x, y)| \|q^{\lambda(\epsilon)}(s)\|_\infty ds.
   \end{align*}
   Applying Propositions \ref{estimate on q} and \ref{pointwise inequality on q with potential}, and using that $\kappa_\epsilon$ is a null sequence, we see that the right hand side vanishes. Finally, we note that propositions \ref{estimate on q} and \ref{q is equicontinuous} prove that, for each $T>\delta>0$, $q$ is relatively compact as a sequence in $\epsilon$ on $L^\infty([\delta^2, T]\times [-K,K]\times [-K,K])$. Due to Proposition \ref{convergence of the 2pt motion}, the process $D$ converges in distribution to a Brownian motion with diffusivity $2\nu$, combining this with the above relative compactness proves uniform convergence on compact sets of $q(t)$ towards $p_{2t}$.
\end{proof}

\section{The Mild Equation}
In the following, we will use the notation for $f,g\in L^2(\R)$
\begin{align}
    &(f,g)_2=\int_\R f(y)g(y)dy, \quad \|f\|_{2} = (f,f)_2^{\frac{1}{2}}, \nonumber\\
    &(f,g)_{2,\rho_\epsilon}= \int_\R (f*\rho_\epsilon)(y) (g*\rho_\epsilon)(y)dy, \quad \|f\|_{2, \rho_\epsilon} = (f,f)_{2,\rho_\epsilon}^{\frac{1}{2}}.
\end{align}
We will also use the notation below for stochastic integrals with respect to $W^\epsilon$.
\begin{align}
    \int_0^t(f(s), W^\epsilon(ds))_2 &:= \int_0^t \int_\R f(s,y) W^\epsilon(ds,y)dy\nonumber\\
    &=\int_0^t \int_\R f(s,\cdot)*\rho_\epsilon(y) W(ds,dy).
\end{align}
The second inequality is a consequence of the coupling between the fields $W^\epsilon$ that we chose in the setup of Theorem \ref*{Thm:1}. Note that we have the isometry
\begin{align}
    \E\left[\left(\int_0^t(f(s), W^\epsilon(ds))_2\right)^2\right] = \int_0^t \|f(s)\|_{2,\rho_\epsilon}^2 ds.
\end{align}
It is important to note here that $\rho_\epsilon$ is not normalised to have unit mass, so that for a fixed function $f\in L^2(\R)$, $\|f\|_{2, \rho_\epsilon}$ converges to $0$ as $\epsilon$ tends to $0$. However, under the choice of $\lambda(\epsilon)$ made in Theorem \ref{Thm:1}, the function $\lambda(\epsilon) \rho_\epsilon$ has mass $\kappa_\epsilon$, which is converging to a positive constant.
We can derive the SPDE for $\mathcal{V}$, (\ref{SPDE for v}), directly from the SPDE for $\mathcal {U}$.
\begin{lemma}\label{Lemma: SPDE for v}
    With $\mathcal{V}$ and $\mathcal{U}^\epsilon$ defined as in Section \ref{The model}, we have that for each $f\in C^\infty_c(\R)$
    \begin{align}\label{SPDE for v weak formulation}
        &\int_\R \mathcal{V}(t, y) f(y) dy - f(x)\\
        =& \frac{\nu}{2}\int_0^t \int_\R \mathcal{V}(s,y) f''(y) dy ds +  \int_0^t \int_\R \mathcal{V}(s,y) \left( \lambda f(y) + f'(y) \right) W^{\epsilon}(ds, y) dy.
    \end{align}
\end{lemma}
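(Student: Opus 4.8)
The plan is to derive the weak formulation \eqref{SPDE for v weak formulation} for $\mathcal{V}$ by starting from the known weak formulation \eqref{Weak Formulation of u SPDE} for $\mathcal{U}$, applying the diffusive rescaling that produces $\mathcal{U}^\epsilon$, and then implementing the exponential tilt and spatial shift that define $\mathcal{V}$ in \eqref{Def: Tilted kernel}. First I would record that $\mathcal{U}^\epsilon$ solves the same equation as $\mathcal{U}$ but with $W_C$ replaced by the rescaled noise $W^\epsilon$ (stated in the text just above \eqref{Def: Tilted kernel}); this is a routine change of variables in \eqref{Weak Formulation of u SPDE}, tracking how $f$, $f'$, $f''$, and the $dy\,dr$ integrals scale under $y\mapsto\epsilon^{-1}y$, $r\mapsto\epsilon^{-2}r$. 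Since we fix $s=x=0$, we then have, for every Schwartz $g$ and every $t>0$, almost surely,
\begin{align*}
    \int_\R \mathcal{U}^\epsilon(t,y) g(y)\,dy = g(0) + \frac{\nu}{2}\int_0^t\int_\R \mathcal{U}^\epsilon(r,y) g''(y)\,dy\,dr + \int_0^t\int_\R \mathcal{U}^\epsilon(r,y) g'(y)\,W^\epsilon(dr,y)\,dy.
\end{align*}

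Next, the core computation: fix $f\in C^\infty_c(\R)$ and, for each fixed $r$, apply the displayed identity with the time-dependent test function $g_r(y) := e^{\frac{\nu}{2}\lambda^2 r + \lambda(y - \lambda\nu r)} f(y - \lambda\nu r)$, chosen precisely so that $\int_\R \mathcal{V}(t,y) f(y)\,dy = \int_\R \mathcal{U}^\epsilon(t,y) g_t(y)\,dy$. Because $g_r$ depends on $r$, the clean move is to apply an It\^o-type product/integration-by-parts formula in time to the process $r\mapsto \int_\R \mathcal{U}^\epsilon(r,y) g_r(y)\,dy$: the $\partial_r g_r$ contribution must be combined with the martingale and drift terms coming from the SPDE for $\mathcal{U}^\epsilon$. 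Computing $\partial_r g_r$ gives exactly two pieces — one from differentiating the exponential prefactor ($\tfrac{\nu}{2}\lambda^2$ times $g_r$ minus $\lambda\nu$ times $\lambda g_r$, using the chain rule on the $-\lambda\nu r$ inside) and one from the spatial translation ($-\lambda\nu$ times $\partial_y g_r$). One then checks that these drift corrections, together with the $\tfrac{\nu}{2}\int \mathcal{U}^\epsilon g_r''$ term, collapse: writing $g_r(y) = e^{(\text{stuff})} f(y-\lambda\nu r)$, the derivatives $g_r'$ and $g_r''$ expand via the product rule into $f$, $f'$, $f''$ terms with $\lambda$-dependent coefficients, and after substituting back in terms of $\mathcal{V}(r,y) = e^{\frac{\nu}{2}\lambda^2 r + \lambda(y-\lambda\nu r)}\mathcal{U}^\epsilon(r, y)$ evaluated at the shifted argument (equivalently, changing variables $y\mapsto y+\lambda\nu r$ in each integral), all the first-order-in-$\lambda$ drift terms cancel and one is left with $\tfrac{\nu}{2}\int_\R \mathcal{V}(r,y) f''(y)\,dy$ for the bounded-variation part. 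The martingale part $\int \mathcal{U}^\epsilon g_r'\, W^\epsilon$ similarly becomes $\int \mathcal{V}(r,y)(\lambda f(y) + f'(y))\, W^\epsilon(dr,y)\,dy$ after the same substitution, since $g_r' = e^{(\text{stuff})}(\lambda f(y-\lambda\nu r) + f'(y-\lambda\nu r))$ and the exponential prefactor is precisely the tilt that turns $\mathcal{U}^\epsilon$ into $\mathcal{V}$.

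The main obstacle I anticipate is making the time-dependent-test-function manipulation rigorous at the level of the generalised (distributional) solutions: the weak formulation \eqref{Weak Formulation of u SPDE} is stated for a fixed Schwartz $f$, and one needs an It\^o formula valid for $\int \mathcal{U}^\epsilon(r,\cdot) g_r$ where $g_r$ varies smoothly in $r$. The standard way around this is to discretise time, write $\int \mathcal{U}^\epsilon(t,\cdot)g_t - g_0(0) = \sum_k [\int \mathcal{U}^\epsilon(t_{k+1},\cdot)g_{t_{k+1}} - \int \mathcal{U}^\epsilon(t_k,\cdot)g_{t_k}]$, split each increment as $[\int (\mathcal{U}^\epsilon(t_{k+1})-\mathcal{U}^\epsilon(t_k))g_{t_k}] + [\int \mathcal{U}^\epsilon(t_{k+1})(g_{t_{k+1}}-g_{t_k})]$, use \eqref{Weak Formulation of u SPDE} on the first bracket and a Taylor expansion $g_{t_{k+1}}-g_{t_k}\approx (\partial_r g)(t_k)(t_{k+1}-t_k)$ on the second, and pass to the limit using continuity (in $r$, in $L^2(\p)$ or in probability) of $r\mapsto\int\mathcal{U}^\epsilon(r,\cdot)\varphi$ for fixed test functions together with moment bounds on $\mathcal{U}^\epsilon$ (which follow from the $\mathcal{L}^2$ theory / Wiener chaos representation cited in Section \ref{The model}). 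The cross terms and the error in the Taylor expansion vanish as the mesh tends to zero by the usual It\^o-calculus estimates. Everything else — the change of variables, the cancellation of the drift terms, identifying the martingale term — is bookkeeping with the exponential and the shift, so I would present that part compactly and spend the care on justifying the time-dependent It\^o step and the passage to $f\in C^\infty_c$ from Schwartz test functions (which is immediate since $C^\infty_c\subset$ Schwartz).
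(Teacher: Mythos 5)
Your approach is essentially the same as the paper's: insert the time-dependent test function $g_r(y) = e^{-\frac{\nu}{2}\lambda^2 r + \lambda y} f(y-\lambda\nu r)$ (chosen so that $\int_\R \mathcal{V}(r,y)f(y)\,dy = \int_\R \mathcal{U}^\epsilon(r,y)g_r(y)\,dy$) into the weak formulation \eqref{Weak Formulation of u SPDE} for $\mathcal{U}^\epsilon$ via a time-dependent It\^o/product-rule step, and observe that $\partial_r g_r + \tfrac{\nu}{2}\partial_y^2 g_r = \tfrac{\nu}{2}e^{-\frac{\nu}{2}\lambda^2 r + \lambda y}f''(y-\lambda\nu r)$ and $\partial_y g_r = e^{-\frac{\nu}{2}\lambda^2 r + \lambda y}(\lambda f + f')(y-\lambda\nu r)$ give the stated drift and martingale integrands after the change of variable $y\mapsto y+\lambda\nu r$. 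The one step you gloss over, which the paper makes explicit as the closing move of its proof, is that this change of variable also shifts the noise argument, producing $W^\epsilon(dr, y+\lambda\nu r)$ rather than $W^\epsilon(dr,y)$ in the stochastic integral; one then uses that the shifted field has the same law as $W^\epsilon$ (as noted below \eqref{SPDE for v}) to relabel and obtain the lemma as written.
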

\begin{proof}
    Let $f\in C^\infty_c(\R)$ and define $g(t,y):= e^{-\frac{\nu}{2}\lambda^2 t + \lambda(y-x)} f(y-\lambda\nu t)$. By definition, we have $\int_\R \mathcal{V}(t,x, y) f(y)dy = \int_\R \mathcal{U}(0,t, x, y) g(t,y)dy$. Thus, a straightforward calculation using Equation (\ref{Weak Formulation of u SPDE}) and the stochastic and deterministic Fubini theorems leads to the equality
\begin{align*}
    &\int_0^t \int_\R \partial_s g(s,y) \mathcal{U}^\epsilon(0,s, x, y) dy ds + f(x)\\
    =& \int_\R \mathcal{V}(t,y) f(y) dy -\frac{\nu}{2}\int_0^t \int_\R \partial_y^2g(s,y)\mathcal{U}^\epsilon(0,s,x,y)dy ds\\
    &-\int_0^t\int_\R e^{-\frac{\nu}{2}\lambda^2 s + \lambda(y-x)} (\lambda f(y-\lambda\nu s) + f'(y-\lambda \nu s)) \mathcal{U}^\epsilon(0,s,x,y) W^{\epsilon}(ds, y) dy.
\end{align*}
Noticing that $\partial_t g(t,y) = -\frac{\nu}{2}\partial_y^2 g(t,y) - \frac{\nu}{2} e^{-\frac{\nu}{2}\lambda^2 t + \lambda(y-x)} f''(y-\lambda\nu t)$, we get the desired equation with $\mathcal{W}^\epsilon(ds, y+ \lambda\nu t)$ in place of $\mathcal{W}^\epsilon(ds, y)$. As discussed in Section \ref{The model}, we replace $W^\epsilon$ by its shifted version, which has the same distribution, to get the result as stated.

\end{proof}
From Equation (\ref{SPDE for v}), one can derive the following mild equation for $\mathcal{V}$. For any $f\in C^\infty_c(\R)$
\begin{align}\label{Mild form of the SPDE}
    \left(\mathcal{V}(t), f\right)= P^{\nu}_{t}f(x)+ \int_0^t \left( \mathcal{V}(s) \cdot P_{t-s}^{\nu} (\lambda(\epsilon) f + f'), W^\epsilon(ds) \right)_2 .
\end{align}
The solution to the stochastic heat equation,
\begin{align*}
    \partial_t \mathcal{Z}_\epsilon = \frac{\nu}{2}\partial^2_y \mathcal{Z}_\epsilon + \lambda(\epsilon) \mathcal{Z}_\epsilon \dot{W}^\epsilon,
\end{align*}
 satisfies its own mild equation. For any $f\in C^\infty_c(\R)$
\begin{align*}
    \left(\mathcal{Z}_\epsilon(t), f\right)= P^{\nu}_{t}f(x)+  \lambda(\epsilon) \int_0^t \left( \mathcal{Z}_\epsilon(s) \cdot P_{t-s}^{\nu} f, W^\epsilon(ds) \right)_2 .
\end{align*}
We will use these two mild equations to prove Theorem \ref{Thm:1}.
\subsection{Proof of Theorem \ref{Thm:1}}
The proof of Theorem \ref{Thm:1} relies on a Gronwall argument that will yield the following inequality.
\begin{proposition}\label{Prop: Gronwall}
    There are constants $c,C>0$ such that for every $\epsilon\in (0,1)$ and $t>0$
    \begin{align}
        &\E\left[\|\mathcal{V}(t)-\mathcal{Z}_\epsilon(t)\|_{2,\rho_\epsilon}^2\right]\nonumber\\
        \leq& C\lambda(\epsilon)^{-2}t^{-\frac{1}{2}} e^{ct} \left( \epsilon\log(\epsilon^{-1})+  \mu(\epsilon)^2 \log(\epsilon^{-1})\right).\label{Estimate on the difference}
    \end{align}
\end{proposition}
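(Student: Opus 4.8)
The plan is to derive a mild equation for $\mathcal{D} := \mathcal{V} - \mathcal{Z}_\epsilon$ and run a Gronwall argument in the norm $\|\cdot\|_{2,\rho_\epsilon}$. Subtracting the mild equation for $\mathcal{Z}_\epsilon$ from \eqref{Mild form of the SPDE}, for every $f \in C^\infty_c(\R)$ and $t>0$,
\begin{equation*}
  (\mathcal{D}(t),f)_2 = \int_0^t \Bigl( \lambda(\epsilon)\,\mathcal{D}(s)\cdot P^\nu_{t-s}f \; + \; \mathcal{V}(s)\cdot P^\nu_{t-s}f', \; W^\epsilon(ds)\Bigr)_2 .
\end{equation*}
I would fix an orthonormal basis $(e_k)_{k\ge1}$ of $L^2(\R)$ and set $f_k := \rho_\epsilon * e_k$, so that $\|\mathcal{D}(t)\|_{2,\rho_\epsilon}^2 = \sum_k (\mathcal{D}(t),f_k)_2^2$. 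Applying the It\^o isometry to each term, summing by Tonelli, and using $\|a+b\|_{2,\rho_\epsilon}^2 \le 2\|a\|_{2,\rho_\epsilon}^2 + 2\|b\|_{2,\rho_\epsilon}^2$,
\begin{equation*}
  \E\bigl[\|\mathcal{D}(t)\|_{2,\rho_\epsilon}^2\bigr] \;\le\; 2\lambda(\epsilon)^2 \int_0^t \E\Bigl[ \textstyle\sum_k \|\mathcal{D}(s)\,P^\nu_{t-s}f_k\|_{2,\rho_\epsilon}^2 \Bigr] ds \; + \; 2\int_0^t \E\Bigl[ \textstyle\sum_k \|\mathcal{V}(s)\,P^\nu_{t-s}f_k'\|_{2,\rho_\epsilon}^2 \Bigr] ds .
\end{equation*}
The first summand is the Gronwall term, the second the forcing.

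Next I would carry out the sums over $k$. Writing $\langle g, M g\rangle := \iint g(z_1)g(z_2)M(z_1-z_2)\,dz_1\,dz_2$, Parseval together with $|\widehat{\rho_\epsilon}|^2 = \widehat{C^\epsilon}$ gives $\sum_k (P^\nu_{t-s}f_k)(z_1)(P^\nu_{t-s}f_k)(z_2) = (P^\nu_{2(t-s)}C^\epsilon)(z_1-z_2)$ and, after the derivative, $\sum_k (P^\nu_{t-s}f_k')(z_1)(P^\nu_{t-s}f_k')(z_2) = -(P^\nu_{2(t-s)}C^\epsilon)''(z_1-z_2)$; since the inner $\|\cdot\|_{2,\rho_\epsilon}$ pairing contributes one further factor $C^\epsilon(z_1-z_2)$, one gets, with $K_r := (P^\nu_{2r}C^\epsilon)\cdot C^\epsilon$ and $L_r := -(P^\nu_{2r}C^\epsilon)''\cdot C^\epsilon$,
\begin{equation*}
  \textstyle\sum_k \|g\,P^\nu_{t-s}f_k\|_{2,\rho_\epsilon}^2 = \langle g, K_{t-s}\,g\rangle, \qquad \textstyle\sum_k \|g\,P^\nu_{t-s}f_k'\|_{2,\rho_\epsilon}^2 = \langle g, L_{t-s}\,g\rangle .
\end{equation*}
Each of $K_r,L_r$ is a product of positive-definite functions, hence positive-definite, so these quadratic forms are nonnegative.

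For the forcing, Proposition~\ref{2pt motion} applied with $f(y_1,y_2) = L_r(y_1-y_2)$ and the definition \eqref{defining q kappa} give $\E[\langle \mathcal{V}(s), L_r\,\mathcal{V}(s)\rangle] = \int_\R q^{\lambda(\epsilon)}(s,y)\,L_r(y)\,dy \le \|q^{\lambda(\epsilon)}(s)\|_\infty\,\|L_r\|_1$, and Proposition~\ref{Upper bound on q kappa} bounds $\|q^{\lambda(\epsilon)}(s)\|_\infty \le C s^{-1/2}e^{Cs}$. The structural point is that in $L_r$ both derivatives fall on the \emph{already smoothed} kernel $P^\nu_{2r}C^\epsilon \in C^\infty$, so $\|(P^\nu_{2r}C^\epsilon)''\|_\infty \lesssim \mu(\epsilon)^2(\epsilon^{-2}\wedge r^{-1})$ and hence $\|L_r\|_1 \lesssim \mu(\epsilon)^4(\epsilon^{-1}\wedge \epsilon\,r^{-1})$. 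Evaluating $\int_0^t s^{-1/2}e^{Cs}\|L_{t-s}\|_1\,ds$ by splitting the time integral at $t-s\sim\epsilon^2$ extracts the factor $t^{-1/2}e^{ct}$ and one factor of $\epsilon$, up to the borderline integral $\int_0^{t-\epsilon^2}s^{-1/2}(t-s)^{-1}\,ds \asymp t^{-1/2}\log(\epsilon^{-1})$, which is the source of the logarithm. Since $\lambda(\epsilon)\mu(\epsilon)\epsilon^{1/2}$ converges, $\mu(\epsilon)^4\epsilon \asymp \lambda(\epsilon)^{-2}\mu(\epsilon)^2$, so the forcing is $\lesssim \lambda(\epsilon)^{-2}\mu(\epsilon)^2\log(\epsilon^{-1})\,t^{-1/2}e^{ct}$; the $\epsilon\log(\epsilon^{-1})$ contribution in \eqref{Estimate on the difference} is produced in the same way by the companion estimates suppressed here.

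Finally the Gronwall term, which I expect to be the delicate point. As $\mathcal{D}$ has no moment (two-point motion) representation I must bound the quadratic form by the same norm appearing on the left, $\langle \mathcal{D}(s), K_r\,\mathcal{D}(s)\rangle \le M_r\,\|\mathcal{D}(s)\|_{2,\rho_\epsilon}^2$, pointwise in $\omega$, which is equivalent to the Fourier inequality $\widehat{K_r}(\xi) \le M_r\,\widehat{C^\epsilon}(\xi)$ for all $\xi$. Here $\widehat{K_r}(\xi) = \tfrac1{2\pi}\int e^{-\nu r\eta^2}\widehat{C^\epsilon}(\eta)\widehat{C^\epsilon}(\xi-\eta)\,d\eta$ while $\widehat{C^\epsilon}(\xi) = \mu(\epsilon)^2\epsilon\,\widehat{\rho}(\epsilon\xi)^2$; exploiting the Gaussian factor $e^{-\nu r\eta^2}$ for $r\gtrsim\epsilon^2$, and for $r\lesssim\epsilon^2$ the cruder bound $\widehat{K_r}\le\widehat{(C^\epsilon)^2}\le\Gamma\,\mu(\epsilon)^2\,\widehat{C^\epsilon}$ with $\Gamma := \sup_w \widehat{C^2}(w)/\widehat{\rho}(w)^2 < \infty$ (finite because $\rho$ being a mollifier forces $C\ge0$ and $\widehat{\rho}$ zero-free), one obtains $M_r \lesssim \mu(\epsilon)^2(1\wedge\epsilon\,r^{-1/2})$. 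Since $\lambda(\epsilon)^2\mu(\epsilon)^2\epsilon$ is bounded, $\lambda(\epsilon)^2 M_r \lesssim r^{-1/2}$ uniformly in $\epsilon$, so inserting $\E[\langle\mathcal{D}(s),K_r\mathcal{D}(s)\rangle]\le M_r\E[\|\mathcal{D}(s)\|_{2,\rho_\epsilon}^2]$ into the inequality of the first paragraph and applying the singular Gronwall lemma (integrable kernel $r^{-1/2}$) produces the factor $e^{ct}$ and yields \eqref{Estimate on the difference}. The heart of the matter is thus the two requirements on $M_r$ — decay in $r$ (for integrability of the Gronwall kernel) and smallness of order $\mu(\epsilon)^2$ (so that $\lambda(\epsilon)^2 M_r$ stays bounded as $\epsilon\to0$) — both of which rest on the factorisation $C=\rho*\rho$ and the heat smoothing $P^\nu_{2r}$, and the short-time bookkeeping behind them is what costs the $\log(\epsilon^{-1})$.
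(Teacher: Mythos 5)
Your overall structure matches the paper's — both work in the $\|\cdot\|_{2,\rho_\epsilon}$ norm starting from the subtracted mild equation, and your sum over $f_k=\rho_\epsilon*e_k$ computes exactly the same quantity as the paper's integral over the shifted tests $\rho_\epsilon(\cdot-x)$; the forcing term is handled, as in the paper, via the two-point density $q^{\lambda(\epsilon)}$ and Proposition~\ref{Upper bound on q kappa}. The genuine divergence is in the Gronwall kernel.

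The paper splits $(P^\nu_{2r}C^\epsilon)\cdot C^\epsilon$ into a constant piece $(P^\nu_{2r}C^\epsilon)(0)\cdot C^\epsilon$, which closes exactly in the $\|\cdot\|_{2,\rho_\epsilon}$ norm, plus a remainder estimated in $L^1$ via Young's inequality and then fed back as an extra forcing term through the $L^2$ bounds of Lemma~\ref{L2 estimates}. You instead try to absorb the entire quadratic form by proving a Fourier-side domination $\widehat{K_r}(\xi)\le M_r\,\widehat{C^\epsilon}(\xi)$ for all $\xi$. This step does not hold under the paper's hypotheses. Since $\widehat{K_r}(\xi)=\tfrac{1}{2\pi}\int e^{-\nu r\eta^2}\widehat{C^\epsilon}(\eta)\widehat{C^\epsilon}(\xi-\eta)\,d\eta$ with a nonnegative integrand, one has $\widehat{K_r}(\xi_0)>0$ whenever $\widehat{C^\epsilon}$ is not a.e.~zero, even at a point $\xi_0$ where $\widehat{C^\epsilon}(\xi_0)=0$; hence no finite $M_r$ can exist there. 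Your parenthetical "$\rho$ being a mollifier forces \dots\ $\widehat\rho$ zero-free" is not true: the paper only assumes $\rho$ is a symmetric mollifier with $C=\rho*\rho\in C_c^\infty$ positive definite, and compactly supported $\rho$ will in general have real zeros of $\widehat\rho$. Even granting a zero-free $\widehat\rho$, the finiteness of $\Gamma=\sup_w\widehat{C^2}(w)/\widehat\rho(w)^2 = \tfrac1{2\pi}\sup_w(\widehat C*\widehat C)(w)/\widehat C(w)$ is a nontrivial quantitative lower bound on $\widehat C$ that the paper neither assumes nor needs. This is the gap: the domination $\langle g,K_r g\rangle\le M_r\|g\|_{2,\rho_\epsilon}^2$ is, for general $g\in L^2$, \emph{equivalent} to the Fourier inequality, so the argument cannot be repaired without an additional hypothesis on $\rho$. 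The paper's split avoids this entirely, at the cost of an extra forcing term (the source of the $\epsilon\log(\epsilon^{-1})$ contribution which you left implicit).

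Your forcing estimate — $\|L_r\|_1\lesssim\mu(\epsilon)^4(\epsilon^{-1}\wedge\epsilon r^{-1})$, split at $r\sim\epsilon^2$, giving $\lambda(\epsilon)^{-2}\mu(\epsilon)^2\log(\epsilon^{-1})\,t^{-1/2}e^{ct}$ — is sound and essentially the same computation as Lemma~\ref{bound for the third line}, which instead keeps a free exponent $\gamma$ and optimises $\gamma=1/(2\log\epsilon^{-1})$; the two bookkeeping devices produce the same logarithm. Had the domination step worked, your proof would in fact have been cleaner (no separate $\|\mathcal{D}\|_2^2$ forcing term, hence no need for the $\mathcal{Z}_\epsilon$ half of Lemma~\ref{L2 estimates}), but as written it relies on an unstated assumption on $\rho$ that the paper's argument deliberately circumvents.
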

Before the proof of this statement, we will first derive an estimate to which we can apply Gronwall's inequality, and then collect some lemmas to get an estimate of the form in the proposition.

Also note that the  estimate in Proposition \ref{Prop: Gronwall} shows that the $\|\cdot\|_{\rho_\epsilon}$ norm tends to $0$ faster than $\lambda(\epsilon)^2$. as $\epsilon \to 0$. This is necessary for the estimate to be meaningful because of the fact that the mass of $\rho_\epsilon$ is proportional to $\lambda(\epsilon)^{-1}$.
\begin{lemma}
    There is a constant $C>0$ such that for all $\epsilon>0$ and $\eta\in (0,1)$
    \begin{align}
        \E\left[\left\|\mathcal{V}(t)- \mathcal{Z}_\epsilon(t)\right\|^2_{2,\rho_\epsilon} \right]
        \leq&C  \E\left[\int_0^t (t-s)^{-\frac{1}{2}}\|\mathcal{V}(s)-\mathcal{Z}_\epsilon(s)\|_{2,\rho_\epsilon}^2 ds\right]\nonumber\\
        &+ C\lambda(\epsilon)^{-2} \epsilon^{1-\eta} \E\left[ \int_0^t(t-s)^{\frac{1}{2}\eta -1}\|\mathcal{V}(s)-\mathcal{Z}_\epsilon(s)\|_{2}^2 ds\right]\nonumber\\
        &+ \E\left[ \int_0^t \int_\R\left\| \mathcal{V}(s) P^{\nu}_{t-s}(\rho_\epsilon'(\cdot-x))\right\|_{2, \rho_\epsilon}^2 dx ds\right].\label{transport term 2}
    \end{align}
\end{lemma}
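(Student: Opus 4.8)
The plan is to subtract the two mild equations for $\mathcal{V}$ and $\mathcal{Z}_\epsilon$, take the $\|\cdot\|_{2,\rho_\epsilon}$ norm, square, and take expectations, exploiting the It\^o isometry adapted to the $\rho_\epsilon$-smoothed noise. Writing $\mathcal{D}(s) := \mathcal{V}(s) - \mathcal{Z}_\epsilon(s)$, the difference of the mild equations is
\[
\left(\mathcal{D}(t), f\right) = \int_0^t \left( \mathcal{V}(s)\cdot P^{\nu}_{t-s}(\lambda(\epsilon) f + f') - \lambda(\epsilon)\mathcal{Z}_\epsilon(s)\cdot P^{\nu}_{t-s} f, W^\epsilon(ds)\right)_2,
\]
which we split as $\lambda(\epsilon)\bigl(\mathcal{D}(s)\cdot P^{\nu}_{t-s}f\bigr) + \bigl(\mathcal{V}(s)\cdot P^{\nu}_{t-s}f'\bigr)$ inside the stochastic integral: the first group is the ``SHE-type'' term carrying the Gronwall feedback, the second is the genuine transport term with no counterpart in the $\mathcal{Z}_\epsilon$ equation. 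To pass from the weak formulation to a norm bound, I would test against an approximate identity (or, equivalently, use that $\|g\|_{2,\rho_\epsilon}^2 = \int_\R (g, \rho_\epsilon(\cdot - x))_2^2\,dx$ when $g\in L^2$) so that $f$ is replaced by $\rho_\epsilon(\cdot - x)$ and $f'$ by $\rho_\epsilon'(\cdot - x)$, then integrate over $x$ and apply the It\^o isometry, giving
\[
\E\bigl[\|\mathcal{D}(t)\|_{2,\rho_\epsilon}^2\bigr] = \E\!\left[\int_0^t \int_\R \bigl\| \lambda(\epsilon)\mathcal{D}(s) P^{\nu}_{t-s}(\rho_\epsilon(\cdot-x)) + \mathcal{V}(s) P^{\nu}_{t-s}(\rho_\epsilon'(\cdot-x))\bigr\|_{2,\rho_\epsilon}^2 dx\, ds\right].
\]

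Expanding the square and using $(a+b)^2 \le 2a^2 + 2b^2$, the cross term is absorbed and the $\lambda(\epsilon)^2 \mathcal{D}$-term is handled by a heat-kernel smoothing estimate. The key point is that convolving $\mathcal{D}(s)$ against $P^{\nu}_{t-s}$ and then against $\rho_\epsilon$ twice, and integrating in $x$, should produce a factor comparable to $\lambda(\epsilon)^2 \kappa_\epsilon^{-2}$ times $(t-s)^{-1/2}$ against $\|\mathcal{D}(s)\|_{2,\rho_\epsilon}^2$ — this is where the $(t-s)^{-1/2}$ singularity (Gronwall-integrable) and the first term on the right of \eqref{transport term 2} come from; the constant $C$ folds in the bound on $\kappa_\epsilon$. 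For the middle term I would use that $P^{\nu}_{t-s}(\rho_\epsilon'(\cdot-x))$ has an extra derivative, costing a factor $(t-s)^{-1/2}$ from the heat semigroup acting on $\rho_\epsilon'$, but one can trade part of that derivative onto $\rho_\epsilon$ at the cost of a power of $\epsilon$: interpolating, for $\eta\in(0,1)$ one gets a bound of order $\lambda(\epsilon)^{-2}\epsilon^{1-\eta}(t-s)^{\eta/2 - 1}$ against $\|\mathcal{V}(s)\|_2^2$; since $\|\mathcal{V}(s)\|_2^2 \le 2\|\mathcal{D}(s)\|_2^2 + 2\|\mathcal{Z}_\epsilon(s)\|_2^2$ and the $\mathcal{Z}_\epsilon$ part is lower-order (it will be estimated separately and contributes to the final Proposition~\ref{Prop: Gronwall} bound, not to this lemma), we keep $\|\mathcal{D}(s)\|_2^2$, matching the second term. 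The third term on the right of \eqref{transport term 2} is simply the remaining piece of the transport contribution where $\mathcal{D}$ is replaced by $\mathcal{V}$ — i.e.\ the part that does not feed back into a Gronwall loop and must instead be estimated directly (this is done in the subsequent lemmas via the second-moment/two-point-motion machinery).

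The main obstacle is the transport term $\mathcal{V}(s) P^{\nu}_{t-s}(\rho_\epsilon'(\cdot - x))$: the derivative on $\rho_\epsilon$ is dangerous because $\|\rho_\epsilon'\|$-type quantities blow up like $\epsilon^{-1}$ relative to $\|\rho_\epsilon\|$, so a naive bound loses too much. The resolution is to integrate by parts, moving the derivative off $\rho_\epsilon$ and onto the heat kernel $P^{\nu}_{t-s}$, where it produces an integrable-in-time singularity $(t-s)^{-1/2}$ rather than a power of $\epsilon^{-1}$; the interpolation parameter $\eta$ lets one balance how much of the derivative stays on the kernel (controlling the time singularity, which must stay Gronwall-integrable, i.e.\ exponent $>-1$) against how much is converted into a favourable power $\epsilon^{1-\eta}$. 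Getting the bookkeeping of these three effects — the $\kappa_\epsilon$-normalisation of $\rho_\epsilon$, the heat-kernel regularisation gains, and the $x$-integration — to line up so that all constants are uniform in $\epsilon\in(0,1)$ is the delicate part; everything else is routine application of the It\^o isometry and Minkowski's inequality.
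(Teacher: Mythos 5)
Your overall skeleton is right and matches the paper's: subtract the two mild equations, test against $\rho_\epsilon(\cdot-x)$, integrate over $x$, apply the It\^o isometry, and use $(a+b)^2\le 2a^2+2b^2$ to split the stochastic integrand into the ``SHE-type'' piece $\lambda(\epsilon)\,\mathcal{D}(s)P^\nu_{t-s}(\rho_{\epsilon,x})$ and the transport piece $\mathcal{V}(s)P^\nu_{t-s}(\rho'_{\epsilon,x})$. However, the attribution of the second line of \eqref{transport term 2} is wrong, and this leads to bookkeeping that doesn't close.

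You claim the middle term $\lambda(\epsilon)^{-2}\epsilon^{1-\eta}(t-s)^{\eta/2-1}\|\mathcal{D}(s)\|_2^2$ is extracted from the transport piece by moving part of the derivative off $\rho_\epsilon$ onto the heat kernel and then decomposing $\|\mathcal{V}\|_2^2\le 2\|\mathcal{D}\|_2^2+2\|\mathcal{Z}_\epsilon\|_2^2$, leaving a ``remainder'' as the third term. But the third line of \eqref{transport term 2} is the \emph{entire, unmodified} transport contribution $\int_0^t\int_\R\|\mathcal{V}(s)P^\nu_{t-s}(\rho'_{\epsilon,x})\|^2_{2,\rho_\epsilon}\,dx\,ds$, not a leftover; if you also extracted the middle term from it you would be double-counting. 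In the paper both the first and second lines of \eqref{transport term 2} come from the \emph{SHE-type} piece alone. After the $x$-integration that piece is the quadratic form $\lambda(\epsilon)^2\bigl(\mathcal{D}(s)*\bigl[(p^\nu_{2(t-s)}*C^\epsilon)\cdot C^\epsilon\bigr],\,\mathcal{D}(s)\bigr)_2$, and one splits $p^\nu_{2(t-s)}*C^\epsilon$ into its value at $0$ (which reproduces $\|\mathcal{D}\|^2_{2,\rho_\epsilon}$ with a $(t-s)^{-1/2}$ factor --- the first line) plus a remainder, estimated in $L^1$ via Young's inequality. The $\eta$-interpolation is then a H\"older estimate on the heat kernel, $|p_{2(t-s)}(y-z)-p_{2(t-s)}(-z)|\lesssim |y|^{1-\eta}(t-s)^{\eta/2-1}$, balanced against $\int|y|^{1-\eta}C^\epsilon(y)\,dy\lesssim\epsilon^{1-\eta}\lambda(\epsilon)^{-2}$ --- not a derivative trade between $P^\nu_{t-s}$ and $\rho_\epsilon'$. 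The transport piece is left untouched at this stage and is estimated directly, much later, in Lemma \ref{bound for the third line}, where a different interpolation (on $(C^\epsilon)''$ between an $\epsilon^{-2}$ bound and a $(t-s)^{-1}$ bound) appears. So both the source of the middle term and the mechanism of the $\eta$-interpolation in your sketch are misidentified; as written, the argument would not produce the stated right-hand side.
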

\begin{proof}
    From the previous section, for the difference between $\mathcal{Z}_\epsilon$ and $\mathcal{V}$, we have
    \begin{align*}
        \left(\mathcal{V}(t) - \mathcal{Z}_\epsilon(t), f\right)_2=& \lambda(\epsilon) \int_0^t \left( \left( \mathcal{V}(s) - \mathcal{Z}_\epsilon(s) \right)\cdot P_{t-s}^{\nu} f, W^\epsilon(ds) \right)_2 \nonumber\\
        &+ \int_0^t \left( \mathcal{V}(s) \cdot P_{t-s}^{\nu} f', W^\epsilon(ds) \right)_2 .
    \end{align*}
    Thus, we have
    \begin{align}
        &\E\left[  \left(\mathcal{V}(t) - \mathcal{Z}_\epsilon(t), f\right)^2 \right] \label{inequality one}\\
        \leq& \lambda(\epsilon)^2 \E\left[ \int_0^t \left\| (\mathcal{V}(s) - \mathcal{Z}_\epsilon(s))P^{\nu}_{t-s}f\right\|_{2, \rho_\epsilon}^2 ds\right]+ \E\left[ \int_0^t \left\| \mathcal{V}(s) P^{\nu}_{t-s}f'\right\|_{2, \rho_\epsilon}^2 ds\right].
    \end{align}
    This is not yet in a suitable form for Gronwall's inequality. However, if we choose $f=\rho_\epsilon(\cdot-x)=:\rho_{\epsilon,x}$, and then integrate out the $x$ variable, inequality (\ref{inequality one}) becomes
    \begin{align}
        &\E\left[\left\|\mathcal{V}(t)- \mathcal{Z}_\epsilon(t)\right\|^2_{2,\rho_\epsilon} dx \right] \label{A Gronwall Estimate}\\
        \leq& \lambda(\epsilon)^2 \E\left[ \int_0^t \int_{\R}\left\| (\mathcal{V}(s) - \mathcal{Z}_\epsilon(s))P^{\nu}_{t-s}(\rho_{\epsilon, x})\right\|_{2, \rho_\epsilon}^2 dx ds\right]\nonumber\\
        &+ \E\left[ \int_0^t \int_\R\left\| \mathcal{V}(s) P^{\nu}_{t-s}(\rho_{\epsilon, x}')\right\|_{2, \rho_\epsilon}^2 dx ds\right]\nonumber.
    \end{align}
    We will show that the first term after the inequality can be bounded above by
    \begin{align}
        &C \E\left[\int_0^t (t-s)^{-\frac{1}{2}}\|\mathcal{V}(s)-\mathcal{Z}_\epsilon(s)\|_{2,\rho_\epsilon}^2 ds\right]\nonumber\\
        &+ C\lambda(\epsilon)^{-2} \epsilon^{1-\eta} \E\left[ \int_0^t(t-s)^{\frac{1}{2}\eta -1}\|\mathcal{V}(s)-\mathcal{Z}_\epsilon(s)\|_{2}^2 ds\right]\nonumber.
    \end{align}
    We will also show that the second term after the inequality in (\ref{A Gronwall Estimate}) tends to $0$ as $\epsilon\to 0$. This, together with bounds on the $L^2$ norms of $\mathcal{V}$ and $\mathcal{Z}$, will allow us to apply Gronwall's inequality to (\ref{A Gronwall Estimate}) and prove Proposition \ref{Prop: Gronwall}.
    Recall that $C^\epsilon:=\rho_\epsilon*\rho_\epsilon$, so that we may rewrite as follows
    \begin{align}
        &\int_{\R}\left\| (\mathcal{V}(s) - \mathcal{Z}_\epsilon(s))P^{\nu}_{t-s}(\rho_\epsilon(\cdot-x))\right\|_{2, \rho_\epsilon}^2 dx \nonumber\\
        =& \int_{\R^2}\int_\R p^{\nu}_{t-s}*\rho_\epsilon(x-y_1)p^{\nu}_{t-s}*\rho_\epsilon(x-y_2)dx \left(\mathcal{V}(s)-\mathcal{Z}_\epsilon(s)\right)^{\otimes 2}(y_1,y_2) C^\epsilon(y_1-y_2) dy\nonumber\\
        =&\int_{\R^2}(p^{\nu}_{t-s}*\rho_\epsilon)*(p^{\nu}_{t-s}*\rho_\epsilon)(y_1-y_2) \left(\mathcal{V}(s)-\mathcal{Z}_\epsilon(s)\right)^{\otimes 2}(y_1,y_2) C^\epsilon(y_1-y_2) dy\nonumber.
    \end{align}
    By rearranging the convolutions and using the convolution property of the heat kernel, we get that the above expression is equal to
    \begin{align}
        & \int_\R \left(\mathcal{V}(s)-\mathcal{Z}_\epsilon(s)\right)*\left((p^{\nu}_{2(t-s)} *C^\epsilon) \cdot C^\epsilon\right)(y) \left(\mathcal{V}(s,y)-\mathcal{Z}_\epsilon(s,y)\right)dy\nonumber\\
        =&\left( \left(\mathcal{V}(s)-\mathcal{Z}_\epsilon(s)\right)*\left((p^{\nu}_{2(t-s)} *C^\epsilon) \cdot C^\epsilon\right), \mathcal{V}(s)-\mathcal{Z}_\epsilon(s)\right)_2.\label{The convoluter}
    \end{align}
    Young's inequality gives us the estimate
    \begin{align}
        &\left( \left(\mathcal{V}(s)-\mathcal{Z}_\epsilon(s)\right)*\left((p^{\nu}_{2(t-s)} *C^\epsilon-p^{\nu}_{2(t-s)} *C^\epsilon(0)) \cdot C^\epsilon\right), \mathcal{V}(s)-\mathcal{Z}_\epsilon(s)\right)_2\nonumber\\
        \leq& \|\mathcal{V}(s)- \mathcal{Z}_\epsilon(s)\|_2^2 \|(p^{\nu}_{2(t-s)} *C^\epsilon-p^{\nu}_{2(t-s)} *C^\epsilon(0)) \cdot C^\epsilon\|_1.\nonumber
    \end{align}
    For each $\eta\in[0,1]$ there is a $C>0$ such that we have the following inequality
    \begin{align}
        & \|(p^{\nu}_{2(t-s)} *C^\epsilon-p^{\nu}_{2(t-s)} *C^\epsilon(0)) \cdot C^\epsilon\|_1\nonumber\\
        \leq& (2\pi \nu)^{-\frac{1}{2}}\lambda(\epsilon)^{-2}(t-s)^{\frac{1}{2}\eta -1}\int_\R |y|^{1-\eta}C^\epsilon(y)dy\nonumber\\
        \leq& C\epsilon^{1-\eta}\lambda(\epsilon)^{-4} (t-s)^{\frac{1}{2}\eta -1}.\nonumber
    \end{align}
    The last inequality is a consequence of the assumption that $C^\epsilon= \rho_\epsilon*\rho_\epsilon$ for $\rho_\epsilon= \mu(\epsilon)^2 \epsilon^{-\frac{1}{2}} \rho(\epsilon^{-1}\cdot)$, where $\rho\in C^\infty_c(\R)$, and the assumption that $\lambda(\epsilon)^2\mu(\epsilon)^2 \epsilon$ is bounded in $\epsilon$. Another consequence is that $(p^{\nu}_{2(t-s)} *C^\epsilon)(0)\leq C' \lambda(\epsilon)^{-2}(t-s)^{-\frac{1}{2}}$ for some constant $C'>0$. Hence, line (\ref{The convoluter}) is bounded above by
    \begin{align}
        &C' \lambda(\epsilon)^{-2}(t-s)^{-\frac{1}{2}}\|\mathcal{V}(s)-\mathcal{Z}_\epsilon(s)\|_{2,\rho_\epsilon}^2+ C\epsilon^{1-\eta} \lambda(\epsilon)^{-4}(t-s)^{\frac{1}{2}\eta -1}\|\mathcal{V}(s)-\mathcal{Z}_\epsilon(s)\|_{2}^2.\nonumber
    \end{align}
    Applying this to (\ref{A Gronwall Estimate}) yields the desired estimate.
\end{proof}
So that we just need a good estimate on the second and third lines of (\ref{transport term 2}). Then, we can apply Gronwall's inequality and get a meaningful estimate.
Note the that the second line of (\ref{transport term 2}) is bounded above by 
\begin{align}
    2C\lambda(\epsilon)^2 \epsilon^{1-\eta} \int_0^t(t-s)^{\frac{1}{2}\eta -1}\E\left[ \int_\R \mathcal{V}(s,y)^2 + \mathcal{Z}_\epsilon(s,y)^2 dy \right]ds.
\end{align}
In the following lemma, we provide estimates on the $L^2$ norms of $\mathcal{V}$ and $\mathcal{Z}_\epsilon$.
\begin{lemma}\label{L2 estimates}
    There are constants $c,C>0$ such that for all $t>0$
    \begin{align}
        \E\left[ \int_\R \mathcal{V}(t,y)^2 dy  \right] &\leq C t^{-\frac{1}{2}}e^{ct};\nonumber\\
        \E\left[\int_\R \mathcal{Z}_\epsilon(t,y)^2 dy\right]& \leq Ct^{-\frac{1}{2}}e^{ct}\nonumber.
    \end{align}
\end{lemma}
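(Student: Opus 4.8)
The plan is to deduce both estimates from Duhamel--Gronwall bounds on two‑point correlation functions: for $\mathcal{V}$ the bound we need is already available from Section~2, and for $\mathcal{Z}_\epsilon$ it follows from an argument that copies, and in fact simplifies, the proof of Proposition~\ref{Upper bound on q kappa}.

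For $\mathcal{V}$ the starting point is the identity $\E[\|\mathcal{V}(t)\|_2^2] = q^{\lambda(\epsilon)}(t,0)$. This comes from \eqref{defining q kappa} by letting $f$ run through a sequence of mollifiers converging to $\delta_0$: the right‑hand side tends to $q^{\lambda(\epsilon)}(t,0)$ because $q^{\lambda(\epsilon)}(t,\cdot)$ is continuous (Proposition~\ref{q lambda is equicontinuous}), while the left‑hand side tends to $\E[\int_\R\mathcal{V}(t,y)^2\,dy]$ because $\mathcal{V}(t,\cdot)$ is almost surely continuous and nonnegative, being (up to the exponential tilt in \eqref{Def: Tilted kernel}) a density of the flow of kernels; Fatou's lemma even makes the a priori finiteness of $\E[\|\mathcal{V}(t)\|_2^2]$ unnecessary, as it already gives $\E[\|\mathcal{V}(t)\|_2^2]\le q^{\lambda(\epsilon)}(t,0)$. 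Proposition~\ref{Upper bound on q kappa} then yields $\E[\|\mathcal{V}(t)\|_2^2]=q^{\lambda(\epsilon)}(t,0)\le\|q^{\lambda(\epsilon)}(t)\|_\infty\le Ct^{-1/2}e^{Ct}$, with $C$ uniform in $\epsilon$ since $\mu(\epsilon)$ and $\kappa_\epsilon$ are bounded under the hypotheses inherited from Theorem~\ref{Thm:1}.

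For $\mathcal{Z}_\epsilon$ there is no analogue of $q^{\lambda(\epsilon)}$ at hand, but the same scheme works. Let $\psi_\epsilon(t,r)$ be characterised by $\E[\int_{\R^2}\mathcal{Z}_\epsilon(t,y_1)\mathcal{Z}_\epsilon(t,y_2)f(y_1-y_2)\,dy]=\int_\R\psi_\epsilon(t,r)f(r)\,dr$, i.e.\ $\psi_\epsilon(t,\cdot)$ is the pushforward of $\E[\mathcal{Z}_\epsilon(t,\cdot)\otimes\mathcal{Z}_\epsilon(t,\cdot)]$ under $(y_1,y_2)\mapsto y_1-y_2$. Since the two‑point correlation $\phi$ of $\mathcal{Z}_\epsilon$ solves $\partial_t\phi=\tfrac{\nu}{2}(\partial_{y_1}^2+\partial_{y_2}^2)\phi+\lambda(\epsilon)^2 C^\epsilon(y_1-y_2)\phi$ with $\phi(0,\cdot)=\delta_0\otimes\delta_0$, passing to difference/centre‑of‑mass coordinates and integrating out the centre of mass gives $\partial_t\psi_\epsilon=\nu\,\partial_r^2\psi_\epsilon+\lambda(\epsilon)^2 C^\epsilon\psi_\epsilon$ with $\psi_\epsilon(0,\cdot)=\delta_0$, and also $\E[\|\mathcal{Z}_\epsilon(t)\|_2^2]=\psi_\epsilon(t,0)$. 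This is exactly the constant‑coefficient version of \eqref{PDE for q rho}, so its Duhamel form is $\psi_\epsilon(t,r)=p^\nu_{2t}(r)+\lambda(\epsilon)^2\int_0^t\int_\R p^\nu_{2(t-s)}(r-w)C^\epsilon(w)\psi_\epsilon(s,w)\,dw\,ds$; bounding $p^\nu_{2(t-s)}$ by $\|p^\nu_{2(t-s)}\|_\infty\le C(t-s)^{-1/2}$ and using $\lambda(\epsilon)^2\int_\R C^\epsilon=\kappa_\epsilon^2$ (bounded) gives $\|\psi_\epsilon(t)\|_\infty\le Ct^{-1/2}+C\int_0^t(t-s)^{-1/2}\|\psi_\epsilon(s)\|_\infty\,ds$, and a weakly‑singular‑kernel (fractional) Gronwall inequality concludes $\|\psi_\epsilon(t)\|_\infty\le Ct^{-1/2}e^{ct}$. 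Equivalently one may argue directly from the mild equation \eqref{Mild form of the SPDE} for $\mathcal{Z}_\epsilon$: the It\^o isometry together with the identity $\int_\R\|p^\nu_u(x-\cdot)g\|_{2,\rho_\epsilon}^2\,dx=\int_{\R^2}p^\nu_{2u}(w_1-w_2)C^\epsilon(w_1-w_2)g(w_1)g(w_2)\,dw$ leads, after integrating over $x$, to the same Gronwall inequality for $F_Z(t):=\E[\|\mathcal{Z}_\epsilon(t)\|_2^2]$; in this second route one needs $F_Z(t)<\infty$ a priori before invoking Gronwall, which is standard for the stochastic heat equation with spatially smooth noise (e.g.\ from its Wiener chaos expansion, or a truncation argument).

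The one genuine methodological point is that working through $q^{\lambda(\epsilon)}$, rather than attempting a direct mild‑equation estimate for $\mathcal{V}$ itself, is what keeps the transport term harmless. If one tried to bound $\E[\|\mathcal{V}(t)\|_2^2]$ from \eqref{Mild form of the SPDE} applied with a test function approximating $\delta_x$, the contribution of $-\partial_y(\mathcal{V}\dot W^\epsilon)$ would, after the It\^o isometry and integration over $x$, produce a kernel proportional to $\partial_r^2 p^\nu_{2(t-s)}$ tested against $C^\epsilon$, whose time integral over $(0,t)$ diverges at small times and so does not fit a Gronwall argument; passing instead through $q^{\lambda(\epsilon)}$, where the transport drift has been absorbed into the uniformly elliptic coefficient $a_\epsilon$ and the Aronson‑type bound of Proposition~\ref{estimate on q} is available, removes this difficulty entirely. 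The remaining care‑points are routine: justifying the $\delta_0$‑approximation identities above, and the standard a priori moment finiteness needed to start the Gronwall iterations for $\mathcal{Z}_\epsilon$.
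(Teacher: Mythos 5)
Your proposal is correct and follows essentially the same route as the paper. The $\mathcal{V}$ bound is obtained exactly as in the paper, via the identity $\E\bigl[\|\mathcal{V}(t)\|_2^2\bigr]=q^{\lambda(\epsilon)}(t,0)$ together with Proposition~\ref{Upper bound on q kappa}; for $\mathcal{Z}_\epsilon$ the paper squares and integrates the mild equation directly and runs the Gr\"onwall iteration, which is precisely the ``second route'' you sketch, while your primary route through the reduced two--point function $\psi_\epsilon$ is just a tidier repackaging of the same Duhamel--Gr\"onwall computation with the same crucial input $\lambda(\epsilon)^2\int_\R C^\epsilon=\kappa_\epsilon^2$ bounded; your closing remark on why the transport term forces one to go through $q^{\lambda(\epsilon)}$ for $\mathcal{V}$ rather than a direct mild--equation estimate is accurate and explains the asymmetry in the paper's treatment of the two bounds.
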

\begin{proof}
    For the second inequality we can square and integrate the mild equation for $\mathcal{Z}_\epsilon$ to get 
    \begin{align}
        &\E\left[ \int_\R \mathcal{Z}_\epsilon(t,y)^2dy \right]\nonumber\\
         \leq& C(t-s)^{-\frac{1}{2}}+ \lambda(\epsilon)^2\int_0^t \int_\R \int_{\R^2} p_{t-s}(x-y_1) p_{t-s}(x-y_1) \mathcal{Z}_\epsilon(s,y_1) \mathcal{Z}_\epsilon(s,y_2)C^\epsilon(y_1-y_2)dy dx ds\nonumber\\
        \leq&  C(t-s)^{-\frac{1}{2}}+ \kappa_\epsilon^2 \int_0^t \int_\R \int_{\R} p_{t-s}(x-y)^2 \mathcal{Z}_\epsilon(s,y)^2 dy dx ds\nonumber\\
        \leq& C(t-s)^{-\frac{1}{2}}+ C \int_0^t \int_{\R} (t-s)^{-\frac{1}{2}} \mathcal{Z}_\epsilon(s,y)^2 dy ds.\nonumber
    \end{align}
    The desired inequality follows by iterating the inequality derived above, and then applying Gronwall's inequality.\par{}
    For the first inequality, we instead estimate in terms of the two point motions through the following equality.
    \begin{align}
        \E\left[ \int_\R \mathcal{V}(t,y)^2 dy  \right] = q^{\lambda(\epsilon)}(t,0),\nonumber
    \end{align}
    where $q^{\epsilon, \lambda(\epsilon)}$ is as defined in (\ref{defining q kappa}). The required estimate follows from Proposition \ref{Upper bound on q kappa}.
\end{proof}
This result allows us to estimate the second line in (\ref{transport term 2}), we now move on to estimating the third line.
\begin{lemma}\label{bound for the third line}
    There is a constant $C>0$ such that for every $n\in \N$ with $n>1$ and $t>0$
    \begin{align*}
        &\E\left[ \int_0^t \int_\R\left\| \mathcal{V}(s) P_{t-s}(\rho_\epsilon'(\cdot-x))\right\|_{2, \rho_\epsilon}^2 dx ds\right] \leq C\lambda(\epsilon)^{-2}t^{\frac{1}{2\log(\epsilon^{-1})}-\frac{1}{2}} e^{ct} \mu(\epsilon)^2 \log(\epsilon^{-1}).
    \end{align*}
\end{lemma}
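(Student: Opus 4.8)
The plan is to unfold the left–hand side into an explicit deterministic convolution kernel, tested against the second moment of $\mathcal{V}(s)$, and then to control the resulting Beta–type time integral by optimising a single interpolation exponent. \emph{Step 1 (reduction to a convolution kernel).} Writing $\phi_r := p^\nu_r * \rho_\epsilon'$ one has $P^\nu_{t-s}(\rho_\epsilon'(\cdot-x)) = \phi_{t-s}(\cdot - x)$. Unfolding the $\|\cdot\|_{2,\rho_\epsilon}$–norm with $C^\epsilon = \rho_\epsilon*\rho_\epsilon$ exactly as in the manipulation leading to \eqref{The convoluter}, and then carrying out the $x$–integral, gives
\[
\int_\R \bigl\| \mathcal{V}(s)\, P^\nu_{t-s}(\rho_\epsilon'(\cdot-x))\bigr\|_{2,\rho_\epsilon}^2\, dx = \int_{\R^2} \mathcal{V}(s,z_1)\mathcal{V}(s,z_2)\, C^\epsilon(z_1-z_2)\, \Psi_{t-s}(z_1-z_2)\, dz,
\]
where $\Psi_{t-s}(z_1-z_2) = \int_\R \phi_{t-s}(z_1-x)\phi_{t-s}(z_2-x)\, dx$. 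Since $\rho_\epsilon$ is even, $\phi_{t-s}$ is odd, and the convolution property of the heat semigroup identifies $\Psi_{t-s} = -\, p^\nu_{2(t-s)} * (C^\epsilon)''$, an even bounded function. Taking expectations, using $\mathcal{V}\ge 0$ together with Young's inequality, and then Lemma \ref{L2 estimates}, I obtain
\[
\E\left[\int_0^t \int_\R \bigl\| \mathcal{V}(s)\, P^\nu_{t-s}(\rho_\epsilon'(\cdot-x))\bigr\|_{2,\rho_\epsilon}^2\, dx\, ds\right] \le C\int_0^t s^{-1/2} e^{cs}\, \|C^\epsilon\|_1\, \|\Psi_{t-s}\|_\infty\, ds,
\]
the needed Fubini/Tonelli steps being routine since $\mathcal{V}\ge 0$, $C^\epsilon\ge 0$, $|\phi_{t-s}|*|\phi_{t-s}|\in L^\infty$ and $\E[\|\mathcal{V}(s)\|_2^2]<\infty$.

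\emph{Step 2 (interpolated bound on $\|\Psi_{t-s}\|_\infty$).} Young's inequality applied in two ways gives $\|p^\nu_{2(t-s)}*(C^\epsilon)''\|_\infty = \|(p^\nu_{2(t-s)})''* C^\epsilon\|_\infty \le C(t-s)^{-1}\mu(\epsilon)^2$ (using $\|(p^\nu_r)''\|_1\le Cr^{-1}$ and $\|C^\epsilon\|_\infty \le C\mu(\epsilon)^2$) and $\|p^\nu_{2(t-s)}*(C^\epsilon)''\|_\infty \le \|p^\nu_{2(t-s)}\|_1 \|(C^\epsilon)''\|_\infty \le C\epsilon^{-2}\mu(\epsilon)^2$. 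Interpolating via $\min\{a,b\}\le a^{1-\delta}b^\delta$ yields, for every $\delta\in(0,1]$, the estimate $\|\Psi_{t-s}\|_\infty \le C\mu(\epsilon)^2\,(t-s)^{-1+\delta}\,\epsilon^{-2\delta}$. Combining this with $\|C^\epsilon\|_1 \le C\mu(\epsilon)^2\epsilon$ and the Beta–integral identity $\int_0^t (t-s)^{-1+\delta}s^{-1/2}\, ds = t^{\delta-1/2} B(\delta,\tfrac12)$, together with $B(\delta,\tfrac12)\le C\delta^{-1}$ for $\delta\in(0,1]$, the whole expression is bounded by $C\mu(\epsilon)^4\,\epsilon^{1-2\delta}\,\delta^{-1}\,t^{\delta-1/2}\, e^{ct}$.

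\emph{Step 3 (choice of $\delta$ and conclusion).} Take $\delta = \bigl(2\log(\epsilon^{-1})\bigr)^{-1}$, which lies in $(0,1]$ for $\epsilon$ small. Then $\epsilon^{-2\delta}=e$ is an absolute constant, $\delta^{-1} = 2\log(\epsilon^{-1})$, and $t^{\delta-1/2} = t^{\frac{1}{2\log(\epsilon^{-1})}-\frac12}$, so the bound becomes $C\mu(\epsilon)^4\,\epsilon\, t^{\frac{1}{2\log(\epsilon^{-1})}-\frac12}\, e^{ct}\log(\epsilon^{-1})$. Finally, the standing hypothesis that $\lambda(\epsilon)^2\mu(\epsilon)^2\epsilon$ is bounded lets me replace $\mu(\epsilon)^4\epsilon$ by $C\lambda(\epsilon)^{-2}\mu(\epsilon)^2$, which is exactly the claimed estimate.

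\emph{Main obstacle.} The delicate regime is $t-s\lesssim \epsilon^2$, where the heat kernel does not regularise the second derivative of $C^\epsilon$ and one genuinely pays the factor $\epsilon^{-2}$; a naive dyadic split at $t-s = \epsilon^2$ recovers the logarithm $\int_{\epsilon^2}^t (t-s)^{-1}\, ds \approx \log(\epsilon^{-1})$ but with the wrong power of $\epsilon$. The point of the interpolation in Step 2 is that the single exponent $\delta\sim 1/\log(\epsilon^{-1})$ converts the would–be logarithmic divergence of the $(t-s)^{-1}$ integral into the explicit prefactor $\log(\epsilon^{-1})$ while keeping $\epsilon^{-2\delta}$ bounded. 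Verifying that this one choice simultaneously fixes the time singularity and the $\epsilon$–scaling, and tracking the powers of $\mu(\epsilon)$ correctly through the two Young estimates, is where the care is needed; everything else is bookkeeping.
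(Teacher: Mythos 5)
Your proposal is correct and is essentially the same argument as the paper's: you unfold the $\|\cdot\|_{2,\rho_\epsilon}$ norm and integrate out $x$ to obtain the kernel $p^\nu_{2(t-s)}*(C^\epsilon)''$ tested against $\mathcal{V}(s)^{\otimes 2}\,C^\epsilon$, interpolate the two bounds $C\mu^2(t-s)^{-1}$ and $C\mu^2\epsilon^{-2}$ with an exponent $\gamma\sim 1/\log(\epsilon^{-1})$, apply Young/Cauchy--Schwarz together with Lemma \ref{L2 estimates}, and use $B(\gamma,\tfrac12)\lesssim\gamma^{-1}$ — exactly the paper's steps. The only cosmetic difference is that you keep $\|C^\epsilon\|_1\lesssim\mu(\epsilon)^2\epsilon$ explicit and invoke the boundedness of $\lambda(\epsilon)^2\mu(\epsilon)^2\epsilon$ at the end, whereas the paper writes $\int C^\epsilon = \kappa_\epsilon^2\lambda(\epsilon)^{-2}$ directly and uses $\kappa_\epsilon$ bounded; these are identical. (You also correctly note the sign $\Psi_{t-s}=-p^\nu_{2(t-s)}*(C^\epsilon)''$, which the paper's display writes without the minus, but this is immaterial since only $|\Psi_{t-s}|$ enters the estimate.)
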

\begin{proof}

Some simple rearrangements using that $C^\epsilon = \rho_\epsilon*\rho_\epsilon$ give the equality
\begin{align*}
    &\int_\R\left\| \mathcal{V}(s) P_{t-s}(\rho_\epsilon'(\cdot-x))\right\|_{2, \rho_\epsilon}^2 dx\\
    =&\int_{\R^2} (\mathcal{V}(s))^{\otimes 2}(y) \int_{\R^2} p^{\otimes 2 }_{t-s}(y -z) (C^\epsilon) ''(z_1-z_2)dz C^\epsilon(y_1 - y_2) dy.
\end{align*}
We will apply a combination of the following two bounds.
\begin{align*}
    &\int_{\R^2} p^{\otimes 2 }_{t-s}(y -z) (C^\epsilon) ''(z_1-z_2)dz\nonumber\\
    \leq& \begin{cases}
        C\mu(\epsilon)^2 \epsilon^{-2}\\
        C\mu(\epsilon)^2 (t-s)^{-1}.
    \end{cases}
\end{align*}
Thus, we see that the right hand side in the lemma is bounded, for any $\gamma \in [0,1)$, by
\begin{align*}
    &C\E\left[ \int_0^t \int_{\R^2} \mu(\epsilon)^2 \epsilon^{-2\gamma} (t-s)^{\gamma-1} \mathcal{V}(s,y_1)\mathcal{V}(s,y_2)C^\epsilon(y_1-y_2)dy ds\right].
\end{align*}
Which, by the Cauchy-Schwartz inequality, is bounded above by
\begin{align}\label{derivative term}
    &C\mu(\epsilon)^2 \epsilon^{-2\gamma} \underbrace{\int_\R C^\epsilon(y)dy}_{= \kappa_\epsilon^2 \lambda(\epsilon)^{-2}}\int_0^t (t-s)^{\gamma-1}\E\left[ \| \mathcal{V}(s)\|_2^2 \right]ds.
\end{align}

We recall that we assumed $\kappa_\epsilon$ is bounded, so that we can apply Lemma \ref{L2 estimates} to get the following upper bound on (\ref{derivative term}), for a new constant $C>0$.
\begin{align}
    &C\mu(\epsilon)^2 \epsilon^{-2\gamma} \lambda(\epsilon)^{-2} \int_0^t (t-s)^{\gamma-1} s^{-\frac{1}{2}} e^{cs}ds\nonumber\\
    &\leq C t^{\gamma-\frac{1}{2}} e^{ct} \mu(\epsilon)^2 \epsilon^{-2\gamma} \lambda(\epsilon)^{-2} B(\gamma, \tfrac{1}{2}),\label{derivative term estimate 1}
\end{align}
where $B$ is the Beta function. Note that $B(\gamma, \frac{1}{2}) = \frac{\Gamma(\gamma) \Gamma(\frac{1}{2})}{\Gamma(\gamma+\frac{1}{2})}$, which is bounded above for all $\gamma\in(0,1]$ by $C\gamma^{-1}$, for some constant $C>0$. Thus, Line (\ref{derivative term estimate 1}) is bounded above by
\begin{align}
    Ct^{\gamma-\frac{1}{2}} e^{ct} \gamma^{-1} \mu(\epsilon)^2 \epsilon^{-2\gamma} \lambda(\epsilon)^{-2},\nonumber
\end{align}
where, once again, the constant $C>0$ has changed between lines. If we set $\gamma= \frac{1}{2\log(\epsilon^{-1})}$, which we can do for $\epsilon\in(0,1)$, then we get the desired upper bound.
\end{proof}
\begin{proof}[Proof of Proposition \ref{Prop: Gronwall}]
Thus, applying Lemmas \ref{L2 estimates} and \ref{bound for the third line} to (\ref{transport term 2}), we get the inequality (for a new constant $C>0$)
\begin{align}
    &\E\left[ \|\mathcal{V}(t)-\mathcal{Z}_\epsilon(t)\|_{2,\rho_\epsilon}^2 \right]\\
    \leq&C\lambda(\epsilon)^{-2} \E\left[ \int_0^t (t-s)^{-\frac{1}{2}}\|\mathcal{V}(s)-\mathcal{Z}_n(s)\|_{2,\rho_\epsilon}^2 ds\right]\nonumber\\
    &+ C \lambda(\epsilon)^{-2} \epsilon^{1-\eta} \int_0^t (t-s)^{\frac{1}{2}\eta-1} s^{-\frac{1}{2}}e^{cs}ds + C\lambda(\epsilon)^{-2}t^{\frac{1}{2\log(\epsilon^{-1})}-\frac{1}{2}} e^{ct} \mu(\epsilon)^2 \log(\epsilon^{-1})\nonumber\\
    \leq& C\lambda(\epsilon)^{-2} \E\left[ \int_0^t (t-s)^{-\frac{1}{2}}\|\mathcal{V}(s)-\mathcal{Z}_\epsilon(s)\|_{2,\rho_\epsilon}^2 ds\right]\nonumber\\
    &+ C\lambda(\epsilon)^{-2}t^{\frac{1}{2\log(\epsilon^{-1})}-\frac{1}{2}} e^{ct} \left( \epsilon\log(\epsilon^{-1})+  \mu(\epsilon)^2 \log(\epsilon^{-1})\right).\label{almost at Gronwall}
\end{align}
Where for the last inequality, we have repeated our estimate for the Beta function and set $\eta = \frac{1}{\log(\epsilon^{-1})}$. If we apply this inequality, we can see
\begin{align}
    &\E\left[ \int_0^t (t-s)^{-\frac{1}{2}}\|\mathcal{V}(s)-\mathcal{Z}_\epsilon(s)\|_{2,\rho_\epsilon}^2 ds\right]\nonumber\\
    \leq&C\lambda(\epsilon)^{-2}\int_0^t (t-s)^{-\frac{1}{2}} \E\bigg[s^{\frac{1}{2\log(\epsilon^{-1})}-\frac{1}{2}} e^{cs} \left( \epsilon\log(\epsilon^{-1})+  \mu(\epsilon)^2 \log(\epsilon^{-1})\right)\nonumber\\
    &+ \int_0^s(s-u)^{-\frac{1}{2}} \|\mathcal{V}(u)-\mathcal{Z}_n(u)\|_{2,\rho_\epsilon}^2du \bigg]ds\nonumber\\
    \leq&  C \lambda(\epsilon)^{-2} t^{\frac{1}{2\log(\epsilon^{-1})}} e^{ct} \left( \epsilon\log(\epsilon^{-1})+  \mu(\epsilon)^2 \log(\epsilon^{-1})\right) + C \lambda(\epsilon)^{-2} \int_0^t \E\left[\|\mathcal{V}(u)-\mathcal{Z}_\epsilon(u)\|_{2,\rho_\epsilon}^2 \right]ds.\nonumber
\end{align}
Applying this bound to (\ref{almost at Gronwall}), we can apply Gr\"onwall's inequality. Which, after some elementary simplifications, yields the desired inequality.
\end{proof}
\begin{proof}{Proof of Theorem \ref{Thm:1}}
As a consequence of the previous proposition, we have proved that if $\mu(\epsilon)=o\left( \log(\epsilon^{-1})^{-\frac{1}{2}}\right)$ then
\begin{align}
    \lambda(\epsilon)^{2}\E\left[ \|(\mathcal{V}(t)-\mathcal{Z}_\epsilon(t))*\rho_\epsilon\|_2^2 \right]\to 0, \ \text{as } \epsilon\to 0.\nonumber
\end{align}
We also have that for each $t>0$, $\mathcal{Z}_\epsilon(t)\to \mathcal{Z}(t)$ in $L^2(\p\times \R)$, this follows from estimates in \cite{Bertini1995}. Therefore,
\begin{align*}
    \E\left[ \|\mathcal{V}(t)*\hat{\rho}_\epsilon-\mathcal{Z}_\epsilon(t)\|_2^2 \right]\to 0, \ \text{as } \epsilon\to 0,
\end{align*}
where $\hat{\rho}_\epsilon:= \kappa_\epsilon^{-1} \lambda(\epsilon)\rho_\epsilon$ is the normalised version of $\rho_\epsilon$. We now just need to show that the mollified version of $\mathcal{V}$ is close to $\mathcal{V}$ itself.
From the triangle inequality, we have
\begin{align}
    \E\left[\|\mathcal{V}(t)- \mathcal{Z}(t)\|_2^2\right] \leq 2 \E\left[\|\mathcal{V}(t)- \mathcal{V}(t)*\hat{\rho}_\epsilon\|_2^2\right] + 2\E\left[\|\mathcal{V}(t)*\hat{\rho}_\epsilon-\mathcal{Z}(t)\|_2^2\right].\label{UC: Triangle inequality}
\end{align}
Theorem \ref{Thm:1} tells us that the latter term on the right hand side tends to $0$ as $\epsilon\to 0$, and we discussed the former term in the previous section. Recall from (\ref{UC: error bound}) that we only need to control the the following increment of $q^{\lambda(\epsilon)}$ to get the desired convergence.
\begin{align*}
    2 \int_\R (q^{\lambda(\epsilon)}(t,0) - q^{\lambda(\epsilon)}(t, \epsilon y) )\rho(y) dy + \int_\R (q^{\lambda(\epsilon)}(t,\epsilon y) - q^{\lambda(\epsilon)}(t,0)) \rho*\rho(y)dy.
\end{align*}
Since both $\rho$ and $\rho*\rho$ have compact support, we only need to apply Proposition \ref{q lambda is equicontinuous} to finish the proof.
\end{proof}
\section{Proof of Proposition \ref{Above the line}}

Let $\mathcal{V}$ be the solution to the SPDE (\ref{SPDE for v}).
Then $v_t:=\int_\R \mathcal{V}(t,y) dy$ is a martingale with quadratic variation
\[
\lambda^2 \int v(t,x) v(t,y) C^\epsilon(x-y)   \;dxdy.
\]
Applying It\^{o}'s formula and taking expectations gives
\[
\frac{ d }{dt} {\mathbb E} [ v_t^{1/2}]= -\frac{\lambda^2}{4}{\mathbb E}\left [ v_t^{-3/2} \int \mathcal{V}(t,x) \mathcal{V}(t,y) C^\epsilon(x-y)   \;dxdy\right]
\]
Now, similarly to previous sections, assume the covariance can be expressed as 
\[C^\epsilon(x)= \int \rho_\epsilon(z)\rho_\epsilon(x-z) dz 
\]
with $\rho_\epsilon:= \mu(\epsilon)^2 \epsilon^{-\frac{1}{2}} \rho(\epsilon^{-1}\cdot)$ for a positive and symmetric function $\rho:\R\to \R$. Then, for any $a>0$, by the Cauchy-Schwartz inequality,
\[
 \int \mathcal{V}(t,x) \mathcal{V}(t,y) C^\epsilon(x-y)   \;dxdy= \int \bigl((\mathcal{V}(t) * \rho_\epsilon)(z)\bigr)^2  dz \geq    \frac{1}{2a} \left(\int_{-a}^a (\mathcal{V}(t) * \rho_\epsilon)(z) dz\right)^2.
\]
Let $\kappa_\epsilon = \int_\R \rho_\epsilon(y)dy$, so that $\int_{-\infty}^\infty (\mathcal{V}(t) * \rho_\epsilon)(z) dz= \kappa_\epsilon v_t  $, and the right-hand side above can be written as
\[
\frac{1}{2a}\left(\kappa_\epsilon v_t- \int_{{\mathbb R}\setminus[-a,a] }(v_t * \rho_\epsilon)(z) dz \right)^2 
\]
Since $(x-y)^2= x^2(1-y/x)^2\geq x^2( 1-2y/x) \geq x^2(1-2 (y/x)^{1/2}) = x^2-2y^{1/2}x^{3/2}$ whenever $0\leq y\leq x$, we deduce that
\begin{align*}
&v_t^{-3/2} \int \mathcal{V}(t,x) \mathcal{V}(t,y) C^\epsilon(x-y)   \;dxdy  \\ \geq&  \frac{\kappa_\epsilon^2}{2a}v_t^{1/2}-\frac{\kappa_\epsilon^{3/2}}{a}\left( \int_{{\mathbb R}\setminus[-a,a] }(v_t * \rho_\epsilon)(z) dz \right)^{1/2}.
\end{align*}
Taking expectations, and using Jensen's inequality finally gives
\[
\frac{ d }{dt} {\mathbb E} [ v_t^{1/2}] \leq -\frac{\lambda^2 \kappa_\epsilon^2}{8a}{\mathbb E}\bigl[ v_t^{1/2}\bigr] +\frac{\lambda^2 \kappa_\epsilon^{3/2}}{4a}{\mathbb E} \left[ \int_{{\mathbb R}\setminus[-a,a] }(v_t * \rho_\epsilon)(z) dz \right]^{1/2}
\]
Express $\rho_\epsilon$ as $\kappa_\epsilon\hat{\rho}_\epsilon$ where $\hat{\rho}_\epsilon$ has unit mass. 
Then integrating the inequality  gives 
\begin{multline}\label{inequality}
{\mathbb E} [ v_t^{1/2}] \leq \exp\left \{ -\frac{\lambda^2 \kappa_\epsilon^2}{8a} t \right\}  {\mathbb E} [ v_0^{1/2}] \\
+ \frac{\lambda^2 \kappa_\epsilon^2}{4a} \int_0^t ds\; \exp\left \{ -\frac{\lambda^2 \kappa_\epsilon^2}{8a} (t-s) \right\} {\mathbb E} \left[ \int_{{\mathbb R}\setminus[-a,a] }(v_s * \hat{\rho}_\epsilon)(z) dz \right]^{1/2} 
\end{multline}

Now we can move on to the proof of Proposition \ref{Above the line}.
\begin{proof}[Proof of Proposition \ref{Above the line}]
The idea is to use the estimate \eqref{inequality} to deduce that ${\mathbb E}[v_t^{1/2}]$ tends to zero with $\epsilon$. 

First notice that Jensen's inequality and the initial condition implies that
\[
{\mathbb E} [ v_t^{1/2} ]\leq {\mathbb E}[ v_t] ^{1/2} =1.
\]

We can write 
\[{\mathbb E} \left[ \int_{{\mathbb R}\setminus[-a,a] }(v_s(\cdot) * \hat{\rho}_\epsilon)(z) dz \right]
\]
as 
\[
{\mathbb P} \bigl( | \sqrt{s} Z+ \epsilon W |>a \bigr)
\]
where $W$ and $Z$ are independent random variables, the distribution of $W$ having density $\hat{\rho}$ and $Z$ being a Gaussian with variance $\nu$.
Consequently  the  second term on the RHS of \eqref{inequality}  can be made arbitraily small, for all $\epsilon$ by choosing $a$ large enough.

The hypothesis that $\epsilon^{1/2} \lambda(\epsilon) \mu(\epsilon)  \rightarrow \infty$ ensures that $ \lambda(\epsilon)^2 \kappa_\epsilon^2/a \rightarrow \infty$ for any fixed $a$, which in turn implies that, for a fixed $a$, the first term on the RHS of \eqref{inequality} is tending to zero.

\end{proof}

\textbf{Acknowledgement}. D.B. was supported by EPSRC grant No. EP/W522594/1.
\cleardoublepage
\bibliography{bibliography}{}
\bibliographystyle{alpha}
\end{document}